\def\blx@nowarnpolyglossia{}\makeatother 
\newtheorem{lem}{Lemma}[section]
\newtheorem{prop}[lem]{Proposition}
\newtheorem{thm}[lem]{Theorem}
\newtheorem{corl}[lem]{Corollary}
\theoremstyle{definition}
\newtheorem{dfn}[lem]{Definition}
\theoremstyle{remark}
\newtheorem{rem}[lem]{Remark}
\newenvironment{psmallmatrix}{\left(\begin{smallmatrix}}{\end{smallmatrix}\right)}
\newcommand{\C}{{\mathbb{C}}} 
\newcommand{\Z}{{\mathbb{Z}}}
\newcommand{\R}{{\mathbb{R}}}
\newcommand{\Q}{{\mathbb{Q}}}
\newcommand{\N}{{\mathbb{N}}}
\newcommand{\ev}{\mathrm{ev}}
\newcommand{\llbracket}{[\kern -0.25em[}
\newcommand{\rrbracket}{]\kern -0.25em]}
\newcommand{\lllbracket}{[\kern -0.25em[\kern -0.25em[}
\newcommand{\rrrbracket}{]\kern -0.25em]\kern -0.25em]}
\newcommand{\bbrack}[1]{\llbracket#1\rrbracket}
\newcommand{\bbbrack}[1]{\lllbracket#1\rrrbracket}
\DeclareMathOperator{\Fr}{Fr}
\DeclareMathOperator{\id}{id}
\DeclareMathOperator{\Ad}{Ad}
\DeclareMathOperator{\coker}{coker}
\DeclareMathOperator{\ind}{ind}
\DeclareMathOperator{\asind}{asind}
\DeclareMathOperator{\ch}{ch}
\DeclareMathOperator{\vol}{vol}
\title{Asymptotically flat Fredholm bundles and assembly}
\author{Benedikt Hunger}
\begin{document}

\maketitle

\begin{abstract}
  Almost flat finitely generated projective Hilbert C*-module bundles were successfully used by Hanke and Schick to prove special
  cases of the Strong Novikov Conjecture. Dadarlat later showed that it is possible to calculate the index of a K-homology class
  $\eta\in K_*(M)$ twisted with an almost flat bundle in terms of the image of $\eta$ under Lafforgue's assembly map and the
  almost representation associated to the bundle.
  Mishchenko used flat infinite-dimensional bundles equipped with a Fredholm operator in order to prove
  special cases of the Novikov higher signature conjecture.

  We show how to generalize Dadarlat's theorem to the case of an
  infinite-dimensional bundle equipped with a continuous family of Fredholm operators on the fibers. Along the way, we show that
  special cases of the Strong Novikov Conjecture can be proven if there exist sufficiently many almost flat
  bundles with Fredholm operator.

  To this end, we introduce the concept of an asymptotically flat Fredholm bundle and its
  associated asymptotic Fredholm representation, and prove an index theorem which relates the index of the asymptotic Fredholm
  bundle with the so-called asymptotic index of the associated asymptotic Fredholm representation.
\end{abstract}

\section{Introduction}

Since Novikov made his famous higher signature conjecture \cite{novikov-algebraic-construction-ii}, special cases of the
signature were successfully proven using very different techniques. Already Lusztig \cite{lusztig-novikov-higher-signature}
realized that one can use index theory and families of flat vector bundles to prove Novikov's conjecture for free abelian
groups. Later, Mishchenko \cite{mishchenko-infinite-dimensional-representations-higher-signatures} used families of
infinite-dimensional flat bundles, equipped with Fredholm operators on the fibers, to prove that Novikov's conjecture holds for
nonpositively curved manifolds. Kasparov \cite{kasparov-k-theory-group-C*-algebras-higher-signatures-conspectus} showed that the
higher signature conjecture follows from what was later called the \emph{Strong Novikov Conjecture}, namely the conjecture that
the \emph{analytic assembly map} $\mu_{BG}\colon K_0(BG)\to K_0(C^*G)$ is rationally injective, where $C^*G$ is any C*-algebra
completion of the complex group algebra $\C G$. This analytic assembly map is defined in terms of a certain flat Hilbert
C*-module bundle, the Mishchenko bundle. Connes, Gromov, and Moscovici \cite{connes-gromov-moscovici-conjecture-de-novikov}
realized that it is sufficient to consider not only flat bundles, but bundles which have a very small curvature in order to
prove Novikov's higher signature conjecture.

Hanke and Schick \cite{hanke-schick-enlargeability-and-index-theory-I,hanke-schick-enlargeability-and-index-theory-infinite,
hanke-schick-novikov-low-degree-cohomology,hanke-positive-scalar-curvature} proved that also the Strong Novikov Conjecture holds
if there is a sufficient supply of almost flat finitely generated projective Hilbert C*-module bundles.

Now let us consider the following situation: Let $p\colon\bar M\to M$ be a covering space, and suppose that there exists a
sequence of compactly supported Hilbert $B_n$-module bundles $\bar E_n\to\bar M$ with compatible connections,\footnote{In
particular, we assume that there is a trivialization at infinity such that parallel transport is trivial with respect to this
trivialization.} such that the curvatures of $\bar E_n$ tend to zero as $n$ goes to infinity, and such that every $\bar E_n$
detects the lift $p^!\ch(\eta)$ of the Chern character of a K-homology class $\eta\in K_*(M)$. In the case where all of the
C*-algebras $B_n$ are equal to $\C$, Hanke and Schick \cite{hanke-schick-enlargeability-and-index-theory-infinite} developed a
method to construct finitely generated projective Hilbert C*-module bundles $E_n\to M$ with compatible connection which detect
$\ch(\eta)$ and whose curvatures tend to zero. Note that this is easy if $\bar M\to M$ is a finite cover: then one can simply
take as fiber over $x\in M$ the direct sum of the fibers of the pre-images $\bar x\in p^{-1}(x)$. The construction is, however,
not trivial at all if $\bar M\to M$ is an infinite cover.

Hanke's and Schick's construction does not work if the C*-algebras $B_n$ are arbitrary C*-algebras. Indeed, their construction
makes essential use of the algebra of trace-class operators on Hilbert spaces, i.~e. on Hilbert $\C$-modules, and for
$B_n\neq\C$ there is no appropriate replacement for this algebra. However, in this case one
can still consider the bundles $E_n$ which have as fibers the direct sums of the fibers of the pre-images. The fibers of these
bundles will then be infinite-dimensional over the C*-algebra $B_n$. Since $\bar E$ is assumed to be compactly supported, there
is a trivialization at infinity which induces a map from $E_n$ to a trivial bundle which is unitary modulo compact operators. In
other words, we obtain a graded Hilbert $B_n$-module bundle $E_n'$ together with an odd self-adjoint operator
$F_n\colon E_n'\to E_n'$ whose square equals the identity modulo a fiberwise compact operator. Furthermore, $F_n$ commutes with
parallel transport in $E_n'$ modulo compact operators because we assumed that parallel transport in $\bar E_n$ is trivial at
infinity. Such a pair $(E_n',F_n)$ will be called a Fredholm bundle, and the family $(E_n',F_n)_{n\in\N}$ will be called an
\emph{asymptotically flat Fredholm bundle} because the curvatures of the bundles $E_n'$ tend to zero as $n$ goes to infinity.
Now one can consider the generalized
Fredholm index $\ind F_n\in K_0(C(X)\otimes B_n)$, and observe that $\langle\eta,\ind F_n\rangle\neq 0$ for all $n$. We will
prove in Theorem~\ref{thm:application SNC} that in this situation again $\mu_{B\pi_1(M;*)}\psi_*\eta\neq 0\in
K_*(C^*\pi_1(M;*))$, which generalizes the results of Hanke and Schick.

Dadarlat proved that one can calculate the index of a K-homology class $\eta\in K_0(M)$ twisted with an $\epsilon$-flat Hilbert
C*-module bundle $E\to M$ in terms of Lafforgue's \cite{lafforgue-k-theorie-bivariante} $\ell^1$-assembly map $\mu_M^{\ell^1}$
and the almost representation associated to $E$ via parallel transport. The point here is that $\epsilon$ is a number which only
depends on $\eta$ and on a concrete representation of $\mu_M^{\ell^1}(\eta)$ as formal difference of equivalence classes of
projections, but which is independent of the bundle $E$. We will generalize Dadarlat's theorem to the case of almost flat
Fredholm bundles in Theorem~\ref{thm:generalization of Dadarlat's index theorem}.

The key step in the proof of Theorem~\ref{thm:application SNC} and Theorem~\ref{thm:generalization of Dadarlat's index theorem}
is that one can associate to an asymptotically flat Fredholm bundle $(E_n,F_n)_{n\in\N}$ an \emph{asymptotic Fredholm
representation} $(W_n,\rho_n,\hat F_n)_{n\in\N}$ of the fundamental group $\pi_1(M;*)$. Such an asymptotic Fredholm
representation, with respect to a finite presentation $\pi_1(M;*)=\langle L\mid R\rangle$, consists for every $n$ of a graded
Hilbert $B_n$-module $W_n$, an odd operator $\hat F_n$, and a group homomorphism $\rho_n\colon\Fr(L)\to\mathcal L_{B_n}(W_n)$
defined on the free group generated by the set of generators $L$, such that $\rho_n$ and $\hat F_n$ satisfy certain
compatibility conditions, and such that $\|\rho_n(r)-\id\|$ tends to zero as $n\to\infty$ for all relations $r\in R$. If
$(E_n,F_n)_{n\in\N}$ is an asymptotically flat Fredholm bundle, then we take $W_n$ to be the fiber of $E_n$. The homomorphism
$\rho_n$ is defined using parallel transport in $E_n$ as in \cite{connes-gromov-moscovici-conjecture-de-novikov}, and $\hat F_n$
is the restriction of $F_n$ to a fiber.

Let us assume now that all C*-algebras $B_n$ are equal to a single C*-algebra $B$---this reduction step turns out to always be
possible. We will then associate to an asymptotic Fredholm representation its \emph{asymptotic index}
\[
  \asind\left((W_n,\rho_n,\hat F_n)_{n\in\N}\right)\in D(\Sigma C^*\pi_1(M;*),B).
\]
Here $D(-,-)$ denotes Thomsen's D-theory group \cite{thomsen-discrete-asymptotic-homomorphisms}, a variant of Connes's and
Higson's E-theory \cite{connes-higson-deformations-morphismes-asymptotiques}. D-theory is defined in terms of so-called
\emph{discrete asymptotic homomorphisms} which will make it a natural object to work with when we consider asymptotic
representations.

In Theorem~\ref{thm:main theorem} we will show how one can calculate the index $\ind F_n$ for an asymptotically flat Fredholm
bundle $(E_n,F_n)_{n\in\N}$ with underlying C*-algebra $B$ in terms of the asymptotic index of the associated asymptotic
Fredholm representation, at least if $n$ is sufficiently large. This calculation will then be used to prove our applications in
Theorem~\ref{thm:application SNC} and Theorem~\ref{thm:generalization of Dadarlat's index theorem}.

This paper represents part of the author's doctoral dissertation \cite{hunger-asymptotically-flat-fredholm-bundles}
at the Universität Augsburg. The dissertation was
supported by a grant of the \emph{Studienstiftung des deutschen Volkes}, and by the \emph{TopMath} program of the
\emph{Elitenetzwerk Bayern}.

I would like to thank my doctoral advisor Bernhard Hanke for his ideas and encouragement, and for introducing me to the topics
of almost flat bundles and the Strong Novikov Conjecture. I would also like to thank Erik Guentner and Rufus Willett for
inspiring discussions about almost flat bundles and Dadarlat's theorem during my stay at the University of Hawai'i at Mānoa.
Furthermore, I would like to thank Thomas Schick for helpful discussions about E-theory, almost flat bundles, and
almost representations during a stay at the Georg-August-Universität Göttingen.

\section{Almost flat Fredholm bundles}

The key concept in this paper is that of an \emph{almost flat Fredholm bundle}. In order to motivate the definition, consider a
closed Riemannian manifold $M$. Let $E\to M$ be a smooth Hermitian vector bundle (i.\,e.\ a complex vector bundle equipped with
a smoothly varying Hermitian inner product on the fibers), and let $\nabla$ be a connection on $E$ which is compatible
with the metric in the sense that parallel transport preserves the inner product. Finally let $\mathcal R^\nabla$ be the
curvature tensor associated to $\nabla$. Thus,
\[
  \mathcal R^\nabla(X,Y)s=\nabla_X\nabla_Ys-\nabla_Y\nabla_Xs-\nabla_{[X,Y]}s
\]
for smooth sections $X,Y\in\mathcal C^\infty(TM)$ and $s\in\mathcal C^\infty(E)$. We define $\|\mathcal R^\nabla\|$ to be the
supremum of $\|\mathcal R^\nabla(X,Y)s\|$ over all points $p\in M$, all pairs of orthonormal tangent vectors $X,Y\in T_pM$, and
all $s\in E_p$ with $\|s\|=1$. The value $\|\mathcal R^\nabla\|$ has the following geometric significance
\cite[cf.][Proposition~2.7]{hunger-almost-flat-bundles-homological-invariance}:

\begin{prop}\label{prop:parallel transport along the boundary of a disc}
  Let $E$ and $\nabla$ be as above. Let $f\colon D^2\to M$ be a piecewise smooth map, and denote by $T\colon E_{f(1)}\to
  E_{f(1)}$ the parallel transport map along the curve $\tau\mapsto f(e^{2\pi i\tau})$. Then
  \[
    \|T-\id\|\leq\vol(f)\cdot\|\mathcal R^\nabla\|,
  \]
  where
  \[
    \vol(f)=\int_{D^2}\|\partial_sf(s,t)\wedge\partial_tf(s,t)\|d(s,t)
  \]
  is a number which depends only on $f$ and not on $E$ or $\nabla$.\qed
\end{prop}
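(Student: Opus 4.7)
The plan is to interpolate between the constant loop at $f(0)$ and the boundary loop via a one-parameter family of loops, then differentiate the parallel transport along this family and integrate. Concretely, I would introduce polar coordinates on $D^2$ by setting $\psi(r,\tau)=f(re^{2\pi i\tau})$ for $(r,\tau)\in[0,1]^2$, and for each $r$ consider the piecewise smooth loop $\gamma_r\colon\tau\mapsto\psi(r,\tau)$ based at $f(r)$. Write $H_r$ for its parallel transport, and let $Q_r\colon E_{f(r)}\to E_{f(1)}$ denote parallel transport along the radial segment $s\mapsto f(s)$, $s\in[r,1]$. Then $\tilde T_r:=Q_rH_rQ_r^{-1}$ is parallel transport around the ``keyhole'' loop based at $f(1)$ obtained by running radially in, traversing $\gamma_r$, and running radially back out; it satisfies $\tilde T_0=\id$ and $\tilde T_1=T$.

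Next I would differentiate $r\mapsto\tilde T_r$ using the standard deformation formula for parallel transport under a smooth family of loops: $\frac{d}{dr}\tilde T_r$ is given by an integral over $\tau\in[0,1]$ of $\mathcal R^\nabla(\partial_\tau\psi,\partial_r\psi)$ conjugated by appropriate parallel transports, which are isometries and therefore do not change norms. Estimating pointwise by the elementary inequality $\|\mathcal R^\nabla(X,Y)\|\le\|\mathcal R^\nabla\|\cdot\|X\wedge Y\|$ (proven by orthogonalizing $Y$ against $X$ and invoking the definition of $\|\mathcal R^\nabla\|$ on orthonormal pairs), and integrating from $0$ to $1$, one obtains
\[
  \|T-\id\|\leq\|\mathcal R^\nabla\|\cdot\int_0^1\!\int_0^1\|\partial_\tau\psi\wedge\partial_r\psi\|(r,\tau)\,dr\,d\tau.
\]

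To finish, a direct computation in polar coordinates shows $\|\partial_\tau\psi\wedge\partial_r\psi\|(r,\tau)=2\pi r\cdot\|\partial_sf\wedge\partial_tf\|(re^{2\pi i\tau})$, and the change of variables $(s,t)=(r\cos2\pi\tau,r\sin2\pi\tau)$, whose Jacobian is precisely $2\pi r$, converts the iterated integral above into $\int_{D^2}\|\partial_sf\wedge\partial_tf\|\,d(s,t)=\vol(f)$, yielding the desired bound.

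The main obstacle is rigorously justifying the deformation formula for $\frac{d}{dr}\tilde T_r$. The cleanest route is to discretize: subdivide $D^2$ into a fine grid, estimate the holonomy around each small rectangle by a first-order Taylor expansion of the form $\id+\mathcal R^\nabla(\partial_sf,\partial_tf)\cdot(\text{signed area})+O((\text{diameter})^3)$, compose the contributions while using that parallel transports are isometries so conjugation does not inflate norms, and pass to the limit as the mesh size tends to zero; the error terms sum to zero in the limit because the total number of rectangles grows like (diameter)$^{-2}$. Piecewise smoothness of $f$ is handled by carrying out this discretization on each smooth piece separately and concatenating, using that a finite concatenation of parallel transports is again an isometry.
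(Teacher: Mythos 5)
The paper gives no proof of this proposition: it is stated with a tombstone and a citation to Proposition~2.7 of the author's companion paper on almost flat bundles, so there is nothing in this paper to compare your argument against. Judged on its own, your outline is a sound and standard route to this curvature--holonomy bound. The reduction to the keyhole family $\tilde T_r=Q_rH_rQ_r^{-1}$ with $\tilde T_0=\id$ and $\tilde T_1=T$, the pointwise inequality $\|\mathcal R^\nabla(X,Y)\|\le\|\mathcal R^\nabla\|\,\|X\wedge Y\|$ obtained by orthogonalizing $Y$ against $X$, and the polar change of variables with Jacobian $2\pi r$ converting $\int_0^1\!\int_0^1\|\partial_\tau\psi\wedge\partial_r\psi\|$ into $\vol(f)$ are all correct; the deformation formula for $\tfrac{d}{dr}\tilde T_r$ is the only real analytic content, and your discretization fallback is indeed the standard way to justify it.

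Two details deserve sharpening. First, for a merely piecewise smooth $f$, the classical derivative $\tfrac{d}{dr}\tilde T_r$ need not exist for every $r$, so it is cleaner either to run the discretization directly, or to first prove the inequality for smooth $f$ and then pass to piecewise smooth $f$ by mollification, using that both the holonomy around $\partial D^2$ and $\vol(f)$ depend continuously on $f$ in a $W^{1,1}$-type sense. Second, in the discretization, the per-rectangle estimate $\id+\mathcal R^\nabla(\partial_sf,\partial_tf)\cdot(\text{signed area})+O(\delta^3)$ (with $\delta$ the mesh size) needs $f$ to be smooth on a neighborhood of that rectangle; rather than ``discretizing each smooth piece separately and concatenating'' afterward, it is simpler to triangulate the parameter domain so that the singular curves of $f$ lie along grid edges, which makes the cubic error bound uniformly available and lets your $N^2\cdot O(N^{-3})\to 0$ accounting close the argument as written. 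Neither of these is a genuine gap---you already flag the deformation formula as the main issue---but they are the points where a referee would ask for more.
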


Now suppose that $M$ is equipped with a smooth triangulation. This means that $M$ is homeomorphic to the geometric realization
of a simplicial complex, and that the simplices are smoothly embedded in $M$. Recall that every point $p\in M$ can then be
written uniquely as a convex combination
\[
  p=\sum_{v\in V_M}\lambda_v(p)\cdot v,
\]
where $V_M$ denotes the set of vertices of $M$. Recall further that the \emph{open star} around a vertex $v\in V_M$ is the set
of all points $p\in M$ with $\lambda_v(p)>0$.

We return to the Hermitian bundle $E\to M$ with compatible connection $\nabla$. For every vertex $v\in V_M$ we consider a
smooth triangulation $\Phi_v\colon S_v\times\C^\ell\to E|_{S_v}$ which is constructed in such a way that $\Phi_v(x,\xi)\in E_x$
is obtained by parallel transporting $\Phi_v(v,\xi)\in E_v$ along the linear path joining $v$ and $x$. For $v,v'\in V_M$ we
consider the transition function $\Psi_{v',v}\colon S_v\cap S_{v'}\to U(\ell)$. This function is defined by the equation
\[
  \Phi_v(x,\xi)=\Phi_{v'}(x,\Psi_{v',v}(x)\xi)
\]
for all $x\in S_v\cap S_{v'}$ and $\xi\in W$. If $x'\in S_v\cap S_{v'}$ is another point, then
$\Psi_{v',v}(x)^{-1}\Psi_{v',v}(x')$ is essentially given by parallel transport along the concatenation $\gamma$ of the
linear paths joining $v$ to $x'$, $x'$ to $v'$, $v'$ to $x$, and $x$ to $v$. Obviously, $\gamma$ bounds a disc whose area is
bounded above by $Cd(x,x')$ for some constant $C>0$ depending only on the Riemannian manifold $M$ and the triangulation. Thus,
Proposition \ref{prop:parallel transport along the boundary of a disc} implies that the transition functions $\Psi_{v',v}$ are
$C\|\mathcal R^\nabla\|$-Lipschitz. In particular, the diameter of the images of $\Psi_{v',v}$ is bounded by $C'\|\mathcal
R^\nabla\|$ where $C'$ does not depend on $E$ or $\nabla$. This motivates the following definition
\cite[cf.][Definition~2.3]{hunger-almost-flat-bundles-homological-invariance}:

\begin{dfn}\label{dfn:almost flat bundles}
  An \emph{$\epsilon$-flat Hermitian vector bundle} over a simplicial complex $X$ is a Hermitian vector bundle $E\to X$,
  together with trivializations $\Phi_v\colon S_v\times\C^\ell\to E|_{S_v}$ over the open stars of $X$, such that each of the
  images of the transition functions $\Psi_{v',v}$ has its diameter bounded by $\epsilon$.
\end{dfn}

Note that in contrast to \cite[Definition~2.3]{hunger-almost-flat-bundles-homological-invariance}, we only demand a bound on the
diameter of the image of $\Psi_{v',v}$, rather than a bound the Lipschitz constants of $\Psi_{v',v}$. Furthermore, we consider
trivializations over the open stars $S_v$ here, rather than trivializations over the simplices of $X$. 

\begin{rem}\label{rem:lipschitz-transition-functions-or-not}
  These changes to the definition actually do not make a big difference. Indeed, as we will recall in
  section~\ref{sec:asymptotic Fredholm representations}, an almost flat bundle (regardless of the precise definition) induces an
  almost representation of the fundamental group of the base space. On the other hand, given an almost representation of the
  fundamental group, by \cite[Theorem~8.7]{hunger-almost-flat-bundles-homological-invariance} one can construct an almost flat bundle
  in the sense of \cite[Definition~2.3]{hunger-almost-flat-bundles-homological-invariance} which has an almost representation close to the given almost
  representation. By \cite[Theorem~8.8]{hunger-almost-flat-bundles-homological-invariance} the newly constructed bundle is isomorphic
  to the bundle that we started with.
\end{rem}

Definition~\ref{dfn:almost flat bundles} can also be extended to the case of Hilbert C*-module bundles
\cite{hunger-almost-flat-bundles-homological-invariance}. Recall from \cite{schick-l2-index-kk-theory-connections} that for a
C*-algebra $B$, a \emph{Hilbert $B$-module bundle} over a space $X$ is a fiber bundle $E\to X$ with typical fiber a Hilbert
$B$-module $W$, and with structure group given by the isometric isomorphisms of $W$. Now as before, we define an $\epsilon$-flat
Hilbert $B$-module bundle over a simplicial complex $X$ to be a Hilbert $B$-module bundle, together with trivializations over
the open stars, such that the diameters of the images of the transition functions is uniformly bounded by $\epsilon$. In
\cite{schick-l2-index-kk-theory-connections}, a definition for connections on Hilbert C*-module bundles is given, and the above
example from Riemannian geometry carries over to Hilbert $B$-module bundles.

We will also need to consider the case of \emph{graded} Hilbert C*-module bundles, where the fibers are $\Z_2$-graded
Hilbert C*-modules, and where the local trivializations are assumed to preserve the grading.

If $W,W'$ are Hilbert $B$-modules, we denote by $\mathcal L_B(W,W')$ the set of adjointable operators $W\to W'$, and by
$\mathcal K_B(W,W')\subset\mathcal L_B(W,W')$ the set of \emph{$B$-compact operators}. As usual, we put $\mathcal
L_B(W)=\mathcal L_B(W,W)$ and $\mathcal K_B(W)=\mathcal K_B(W,W)$. A \emph{generalized Fredholm operator} is an
operator $F\in\mathcal L_B(W,W')$ such that $F^*F-\id$ and $FF^*-\id$ are compact.

If $F\in\mathcal L_B(W,W')$ is a generalized Fredholm operator, then we may consider the \emph{$\Z_2$-graded} Hilbert $B$-module
$\hat W=W\oplus W'$. Then the odd self-adjoint operator
\[
  \hat F=\begin{pmatrix}0&F^*\\F&0\end{pmatrix}\in\mathcal L_B(\hat W)
\]
satisfies $\hat F^2-\id\in\mathcal K_B(\hat W)$. Conversely, if $\hat F\in\mathcal L_B(\hat W)$ is an odd self-adjoint operator
such that $\hat F^2-\id$ is compact, then $\hat F$ is of the form described above, for some generalized Fredholm operator
$F\in\mathcal L_B(\hat W)$. This shows why it can be useful to consider graded Hilbert modules. Motivated by this observation,
we can now formulate the definition of an $\epsilon$-flat Fredholm bundle.

\begin{dfn}
  An \emph{$\epsilon$-flat Fredholm bundle} over a simplicial complex $X$ consists of the following data:
  \begin{itemize}
    \item An $\epsilon$-flat graded Hilbert $B$-module bundle $E\to X$, where $B$ is a unital C*-algebra and where the typical
      fiber of $E$ is a countably generated Hilbert $B$-module $W$,
    \item A map $F_E\colon E\to E$ with the following property: For each vertex $v\in V_X$ there exists a continuous map
      $F_v\colon S_v\to\mathcal L_B(W)$ such that
      \[
        F_E\left(\Phi_v(x,\xi)\right)=\Phi_v\left(x,F_v(x)\xi\right),
      \]
      and such that $F_v$ takes values in the set of odd self-adjoint operators on $W$. Furthermore, we assume that
      $F_v(x)^2-\id\in\mathcal K_B(W)$ for all $x\in S_v$, and that $F_v(x)-F_{v'}(x')\in\mathcal K_B(W)$ for all $v,v'\in V_X$,
      $x\in S_v$, and $x'\in S_{v'}$.
  \end{itemize}
\end{dfn}

The motivating example for almost flat Hilbert module bundles can be extended to give important examples of almost flat
Fredholm bundles. Indeed, let $E\to M$ be a smooth Hilbert $B$-module bundle over a smoothly triangulated manifold, equipped
with a compatible connection $\nabla$, and let $F\colon E\to E$ be a smooth bundle map such that $F^2-\id$ is fiberwise compact,
and such that $F$ commutes with parallel transport on $E$ up to compact operators. We choose the trivializations
$\Phi_v\colon S_v\times W\to E|_{S_v}$ in such a way that for any two vertices $v,v'\in V_M$, the map $\Phi_v(v,-)$ is
obtained from $\Phi_{v'}(v',-)$ by parallel transport along a fixed smooth curve in $M$. With these data, $(E,F)$ becomes a
$C\epsilon$-flat Fredholm bundle over $M$.

\section{The generalized Fredholm index}

In this section, we will review the generalized Fredholm index. The key idea underlying the index is the following: If
$F\in\mathcal L_\C(\mathcal H,\mathcal H')$ is a Fredholm operator between two Hilbert spaces (i.\,e.\ Hilbert $\C$-modules)
$\mathcal H$ and $\mathcal H'$, then the kernel and the cokernel of $F$ are finite-dimensional and the index of $F$ is defined
to be the number $\ind F=\dim\ker F-\dim\coker F\in\Z$. If the underlying C*-algebra is arbitrary, then various complications
arise. Firstly, a generalized Fredholm operator $F\in\mathcal L_B(W,W')$ need not have finitely generated projective kernel and
cokernel, although there always exists a compact perturbation $F'$ of $F$ such that $\ker F'$ and $\coker F'$ are finitely
generated projective Hilbert $B$-modules. Secondly, while finitely generated projective Hilbert $\C$-modules are classified by
their dimension, the same is not true if $\C$ is replaced by an arbitrary unital C*-algebra $B$. However, the K-theory group
$K_0(B)$---which can be defined to be the Grothendieck group of the semigroup of equivalence classes of finitely generated
Hilbert $B$-modules---is a natural target for the index \[ \ind F=[\ker F']-[\coker F']\in K_0(B) \] where $F'-F\in\mathcal
K_B(W,W')$ is compact and $\ker F'$ and $\coker F'$ are finitely generated projective Hilbert $B$-modules.  It is well-known
that $\ind F\in K_0(B)$ does not depend on the choice of compact perturbation $F'$ \cite[Chapter~17]{wegge-olsen-k-theory}.

The generalized Fredholm index fits very well into the framework of Kasparov's KK-theory, in a sense that we will describe next.
Basic references for KK-theory include \cite{blackadar-ktheory-for-operator-algebras,jensen-thomsen-elements-kk-theory}.
KK-theory is a bivariant functor on the category of C*-algebras, but we will only need to consider the case where the first of the
C*-algebras is equal to $\C$. Thus, we will use the notation $KK(B)$ for the group which is usually denoted by $KK(\C,B)$. The
group $KK(B)$ is constructed from what we will call a Kasparov $B$-module.\footnote{This is what is usually called a
Kasparov-$\C$-$B$-bimodule in the literature.} For the rest of this section, we fix a unital C*-algebra $B$.

\begin{dfn}\label{dfn:kasparov modules}
  A \emph{Kasparov $B$-module} is a triple $(W,p,F)$ where
  \begin{itemize}
    \item $W$ is a graded countably generated Hilbert $B$-module,
    \item $p\in\mathcal L_B(W)$ is an even projection, and
    \item $F\in\mathcal L_B(W)$ is an odd operator,
  \end{itemize}
  such that $[p,F],p(F^2-\id),p(F-F^*)\in\mathcal K_B(W)$.

  Denote by $I=[0,1]$ the unit interval.  A \emph{homotopy} of Kasparov $B$-modules is a triple $(W,(p_\tau)_{\tau\in
  I},(F_\tau)_{\tau\in I})$ where $\tau\mapsto p_\tau$ is a continuous path of even projections and $\tau\mapsto F_\tau$ is a
  continuous path of odd operators such that every $(W,p_\tau,F_\tau)$ is a Kasparov $B$-module. The Kasparov $B$-modules
  $(W,p_0,F_0)$ and $(W,p_1,F_1)$ are called \emph{homotopic}.

  A Kasparov $B$-module $(W,p,F)$ is called \emph{degenerate} if $[p,F]=p(F^2-\id)=p(F-F^*)=0$. Two Kasparov $B$-modules
  $(W,p,F)$ and $(W',p',F')$ are called \emph{equivalent} if there exists a unitary equivalence $U\colon W\to W'$ of graded
  Hilbert $B$-modules such that $p'=UpU^*$ and $F'=UFU^*$.
\end{dfn}

By Kasparov's Stabilization Theorem \cite[Theorem~15.4.6]{wegge-olsen-k-theory}, $W\oplus\mathcal H_B$ is unitarily isomorphic
to $\mathcal H_B$, where $\mathcal H_B$ is the standard Hilbert $B$-module.

Thus, every equivalence class of Kasparov $B$-modules has a representative $(W,p,F)$ where $W\subset\mathcal H_B$ is a Hilbert
$B$-submodule. We denote by $\mathcal E(B)$ the set of equivalence classes of Kasparov $B$-modules.
Direct sum gives $\mathcal E(B)$ an abelian monoid structure, with zero element given by $0=[(0,0,0)]$. We consider the
equivalence relation $\sim$ on $\mathcal E(B)$ which is generated by homotopy and the addition of degenerate modules, and put
$KK(B)=\mathcal E(B)/\sim$. It is clear that the monoid structure on $\mathcal E(B)$ induces a monoid structure on $KK(B)$. In
fact, $KK(B)$ is an abelian group \cite[Chapter~17]{blackadar-ktheory-for-operator-algebras}.

There are various different ways in which the definition of the groups $KK(B)$ can be simplified, and the following is the
general setup for these simplifications. Let $S\subset\mathcal E(B)$ be a subsemigroup. By abuse of notation, we will write
$(E,p,F)\in S$ whenever the class of $(E,p,F)$ in $\mathcal E(B)$ is
contained in $S$. We define $\sim_S$ to be the equivalence relation on $S$ which is generated by homotopies
$(W,(p_\tau),(F_\tau))$ such that all $(W,p_\tau,F_\tau)\in S$, and by addition of degenerate modules $(W,p,F)\in S$. Now we
call $S$ \emph{ample} if the natural map $S/\sim_S\to\mathcal E(B)/\sim=KK(B)$, which is induced by the inclusion $S\to\mathcal
E(B)$, is bijective.

Consider the following subsemigroups:
\begin{itemize}
  \item $\mathcal C(B)\subset\mathcal E(B)$ is the set of equivalence classes of Kasparov $B$-modules of the form $(W,p,F)$
    where $F=F^*$ and $\|F\|\leq 1$,
  \item $\mathcal H(B)\subset\mathcal E(B)$ is the set of equivalence classes of Kasparov $B$-modules of the form $(\mathcal
    H_B,p,F)$,\footnote{Note that $\mathcal H_B\oplus\mathcal H_B\cong\mathcal H_B$ by Kasparov's Stabilization Theorem, so that
    indeed $\mathcal H(B)$ is a subsemigroup of $\mathcal E(B)$.}
  \item $\mathcal U(B)\subset\mathcal E(B)$ is the set of equivalence classes of Kasparov $B$-modules of the form $(W,\id,F)$,
  \item $\mathcal Q(B)\subset\mathcal E(B)$ is the set of equivalence classes of Kasparov $B$-modules of the form $(W,p,F)$ with
    $F=F^*=F^{-1}$.
\end{itemize}
The following statement is well-known, and possesses generalizations to the case of $KK(A,B)$ for arbitrary C*-algebras $A$
\cite[cf.][Chapter~17]{blackadar-ktheory-for-operator-algebras}.

\begin{prop}
  The subsemigroups $\mathcal C(B)$, $\mathcal H(B)$, $\mathcal U(B)$, and all of their intersections, are ample. Similarly,
  $\mathcal Q(B)$ and $\mathcal Q(B)\cap\mathcal H(B)$ are ample.\qed
\end{prop}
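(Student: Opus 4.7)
My strategy is uniform across the claims: for each subsemigroup $S$ I will verify (i) surjectivity of $S/{\sim_S}\to KK(B)$, by exhibiting for any Kasparov $B$-module $(W,p,F)$ an explicit sequence of homotopies and additions of degenerate modules producing a representative in $S$; and (ii) injectivity, by observing that each such reduction can be applied pointwise to any $\sim$-homotopy joining two elements of $S$, yielding a $\sim_S$-homotopy.

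For $\mathcal C(B)$, first pass from $F$ to $\tfrac12(F+F^*)$ via the straight line homotopy — the Kasparov conditions are preserved because $p(F-F^*)$ is compact — and then apply a continuous functional calculus $F\mapsto F(1+F^2)^{-1/2}$ along a parameter $\tau$ to bound the norm by $1$. For $\mathcal H(B)$, add the degenerate Kasparov module $(\mathcal H_B,0,0)$ (whose Kasparov defects vanish because $p=0$), then invoke Kasparov's Stabilization Theorem to identify $W\oplus\mathcal H_B$ unitarily with $\mathcal H_B$. For $\mathcal U(B)$, decompose $W=pW\oplus(1-p)W$; the block form $F=\begin{pmatrix}a&b\\b^*&c\end{pmatrix}$ satisfies $b\in\mathcal K_B$ by virtue of $[p,F]\in\mathcal K_B(W)$, so I may contract the off-diagonal blocks to zero by a straight line homotopy and split off the degenerate summand $((1-p)W,0,c)$, leaving the representative $(pW,\id,a)\in\mathcal U(B)$. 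These three reductions are mutually compatible — the functional calculus respects block structure, adding $(\mathcal H_B,0,0)$ preserves self-adjointness and the projection block-form, and so on — so composing them handles every intersection $\mathcal C(B)\cap\mathcal H(B)\cap\mathcal U(B)$ and its sub-intersections.

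For $\mathcal Q(B)$, the extra condition $F^2=\id$ is more delicate because $p(F^2-\id)\in\mathcal K_B(W)$ does not imply global compactness of $F^2-\id$. Starting from an element of $\mathcal C(B)$, I first fix an odd self-adjoint involution $\sigma$ on $(1-p)W$ and replace the block $c=(1-p)F(1-p)$ by $\sigma$ via linear homotopy; a direct block computation, using compactness of $b$, shows that all Kasparov defects remain unchanged and that $F^2-\id$ is now compact on the entire module. Consequently $F$ is self-adjoint of norm $\leq 1$ with essential spectrum in $\{\pm1\}$, so a continuous functional calculus homotopy — combined, if necessary, with stabilization by a further degenerate module of the form $(V,0,\sigma')$ with $\sigma'^2=\id$ to absorb any kernel of $F$ — deforms $F$ into an operator $F'$ with $F'^2=\id$. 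This yields the desired representative in $\mathcal Q(B)$. The case $\mathcal Q(B)\cap\mathcal H(B)$ follows by performing the $\mathcal H(B)$ reduction first, so that $\sigma$ and $V$ may be chosen inside $\mathcal H_B$.

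The main obstacle lies in this last deformation: the function $x\mapsto x^{-1/2}$ needed to rescale $F$ into an involution is singular at $0$, so particular care is required where $\Sp(F)$ meets zero. The stabilization by the auxiliary degenerate summand provides precisely the extra room needed to continuously push any such kernel into the $\sigma'$-part. Ensuring that this kernel-absorbing homotopy can itself be chosen continuously in a homotopy parameter — a prerequisite for the injectivity half of the statement — is the most technically involved step; all other parts of the proof are standard manipulations in KK-theory.
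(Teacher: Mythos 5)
The paper states this proposition without proof, merely pointing to the literature, so I assess your argument on its own merits. Your overall strategy is correct and the reductions you give for $\mathcal H(B)$ and $\mathcal U(B)$ are sound, but the steps for $\mathcal C(B)$ and $\mathcal Q(B)$ contain genuine errors. For $\mathcal C(B)$: the map $F\mapsto F(1+F^2)^{-1/2}$ does \emph{not} preserve the Kasparov condition $p(F^2-\id)\in\mathcal K_B(W)$. Writing $\bar F,\bar p$ for the images in the Calkin algebra, $\bar p$ commutes with the self-adjoint $\bar F$ and $\bar p\bar F^2=\bar p$, so $\bar p\bar F$ is a self-adjoint unitary in the corner $\bar p(\mathcal L_B/\mathcal K_B)\bar p$ with spectrum in $\{\pm1\}$; hence $\bar p\bigl(\phi(\bar F)^2-1\bigr)=(\phi^2-1)(\bar p\bar F)$ vanishes only if $\phi(\pm1)^2=1$, whereas your $\phi(t)=t(1+t^2)^{-1/2}$ gives $\phi(\pm1)^2=\tfrac12$. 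You must use a cutoff that fixes $\pm1$, e.g.\ $\phi(t)=\max(-1,\min(1,t))$, which is exactly the function the paper uses in Section~4.

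For $\mathcal Q(B)$, two things go wrong. First, an odd self-adjoint involution $\sigma$ on $(1-p)W$ need not exist: it exists iff the even and odd graded summands of $(1-p)W$ are isomorphic as Hilbert $B$-modules, which can fail (fixable by stabilizing first, but you should say so). Second, and more seriously, once $F^2-\id$ has been made compact, the remaining obstruction to deforming $F$ to an involution is the generalized Fredholm index $\ind F_0\in K_0(B)$ of the off-diagonal block $F_0$ of $F$, and your remedy does not remove it: if $\sigma'^2=\id$ then the off-diagonal block of $\sigma'$ is a unitary and contributes index zero, so $\ind(F_0\oplus\sigma'_0)=\ind F_0$ is unchanged and the combined self-adjoint operator still cannot be compactly perturbed to an invertible one. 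The standard device, which is exactly the paper's own construction of $Q(F)$ in Section~4, is to add the degenerate module $(W^{\mathrm{op}},0,-F)$, whose off-diagonal block contributes $-\ind F_0$, and then rotate the off-diagonal entries toward $\sqrt{1-F^2}$, ending at the genuine odd self-adjoint unitary
\[
  Q(F)=\begin{pmatrix}F&\sqrt{1-F^2}\\\sqrt{1-F^2}&-F\end{pmatrix}.
\]
This single move resolves both of the difficulties at once, and gives $\mathcal Q(B)\cap\mathcal H(B)$ after one further stabilization.
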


In order to describe the relation to the generalized Fredholm index, we consider the description of $KK(B)$ corresponding to the
intersection $\mathcal U(B)\cap\mathcal C(B)\cap\mathcal H(B)$. Thus, an element of $KK(B)$ is represented by a triple
$(\mathcal H_B,\id,F)$ for some odd operator $F\in\mathcal L_B(\mathcal H_B)$ such that $F=F^*$, $\|F\|\leq 1$, and such that
$F^2-\id$ is compact. Thus,
\[
  F=\begin{pmatrix}0&F_0^*\\F_0&0\end{pmatrix}
\]
for a generalized Fredholm operator $F_0\colon H_B\to H_B$, where $H_B$ is the ungraded standard Hilbert $B$-module. The
operator $F$ is degenerate if and only if $F_0$ is a unitary isomorphism. The following theorem is well-known.

\begin{thm}[{\cite[Theorem~17.3.11]{wegge-olsen-k-theory}}]
  The map $\ind\colon KK(B)\to K_0(B)$ which associates to $[\mathcal H_B,\id,F]\in KK(B)$ the index $\ind F_0\in K_0(B)$ is a
  group isomorphism.\qed
\end{thm}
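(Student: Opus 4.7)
Writing the odd self-adjoint $F$ in block form with respect to the grading gives $F=\begin{psmallmatrix}0 & F_0^*\\ F_0 & 0\end{psmallmatrix}$ for a generalized Fredholm operator $F_0\colon H_B\to H_B$ between two copies of the ungraded standard Hilbert $B$-module, and the assignment under consideration sends $[\mathcal H_B,\id,F]$ to $\ind F_0\in K_0(B)$. The plan is to verify in turn that this formula is well-defined, additive, surjective, and injective. Well-definedness of $\ind F_0$ under compact perturbation is already noted in the discussion preceding the theorem. Invariance under addition of degenerate modules is immediate, since in a degenerate Kasparov module $(\mathcal H_B,\id,F)$ the operator $F_0$ is unitary and hence has zero index. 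Homotopy invariance reduces to the statement that $\ind$ is locally constant along norm-continuous paths of generalized Fredholm operators; a standard covering argument, where on each small subinterval a single compact perturbation produces finitely generated projective kernel and cokernel uniformly, handles this. Additivity under direct sum is clear from $\ker(F_0\oplus F_0')=\ker F_0\oplus\ker F_0'$ and the analogous identity for cokernels.

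\emph{Surjectivity.} Given $[P]-[Q]\in K_0(B)$ with $P,Q$ finitely generated projective, Kasparov's Stabilization Theorem lets me decompose the two copies of $H_B$ as $H_B=P\oplus H'$ and $H_B=Q\oplus H''$ with $H'\cong H''\cong H_B$. Picking a unitary isomorphism $u\colon H'\to H''$ and setting $F_0=0\oplus u$ produces a generalized Fredholm operator whose kernel is $P$ and whose cokernel is $Q$; the defects $F_0^*F_0-\id$ and $F_0F_0^*-\id$ are the negatives of the projections onto $P$ and $Q$, which are compact because $P$ and $Q$ are finitely generated projective. Hence $\ind F_0=[P]-[Q]$.

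\emph{Injectivity and main obstacle.} Suppose $\ind F_0=0$. After a compact perturbation---which preserves the $KK$-class---I may assume that $\ker F_0$ and $\coker F_0$ are finitely generated projective with $[\ker F_0]=[\coker F_0]$. Adding a degenerate Kasparov module whose $F_0$ part is a unitary on some standard free Hilbert $B$-module enlarges both kernel and cokernel by a common free summand, and after sufficiently many such additions the kernel and cokernel become isomorphic as Hilbert $B$-modules. Any such isomorphism yields a compact perturbation of $F_0$ that is a Hilbert $B$-module isomorphism; its polar decomposition produces a unitary $F_0'$ whose associated Kasparov module is degenerate. A straight-line homotopy, slightly rescaled to keep $\|F_\tau\|\leq 1$, connects $F_0$ and $F_0'$. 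The step I expect to require the most care is exactly this last one: verifying that the homotopy stays inside the ample subsemigroup $\mathcal U(B)\cap\mathcal C(B)\cap\mathcal H(B)$, that is, that $F_\tau^2-\id$ remains compact throughout, that self-adjointness and oddness persist, and that the rescaling needed for the norm bound does not disturb the other Kasparov module conditions. Well-definedness and surjectivity should by contrast be routine once the Stabilization Theorem is in hand.
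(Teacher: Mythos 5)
The paper itself offers no proof of this statement: it is quoted from Wegge-Olsen (Theorem~17.3.11) and marked with \qed\ to indicate it is being cited, not established. So there is no ``paper's own proof'' to compare against; I am assessing your outline against what a correct proof must contain.

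Your overall plan --- well-definedness and additivity from compact perturbation and direct-sum compatibility, surjectivity from the Stabilization Theorem, injectivity by perturbing to an invertible and then unitarizing by polar decomposition --- is indeed the standard route. Two points, one a real gap and one a non-issue:

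\textbf{A genuine gap in the injectivity step.} You write that ``adding a degenerate Kasparov module whose $F_0$ part is a unitary on some standard free Hilbert $B$-module enlarges both kernel and cokernel by a common free summand.'' It does not. If $u$ is a unitary on $B^n$, then $F_0\oplus u$ has $\ker(F_0\oplus u)=\ker F_0$ and $\coker(F_0\oplus u)=\coker F_0$; the kernel and cokernel are untouched. The correct maneuver is: after adding the degenerate summand $(B^n\oplus (B^n)^{\mathrm{op}},\id,\begin{psmallmatrix}0&u^*\\u&0\end{psmallmatrix})$, further perturb $F_0\oplus u$ to $F_0\oplus 0$. Since $B^n$ is finitely generated projective, every operator on it is compact, so $u\rightsquigarrow 0$ is a compact perturbation and therefore preserves the $KK$-class; only now does the kernel become $\ker F_0\oplus B^n$ and the cokernel $\coker F_0\oplus B^n$. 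With $[\ker F_0]=[\coker F_0]$ in $K_0(B)$, one can choose $n$ so that these two are isomorphic as Hilbert $B$-modules, and your argument resumes from there. As currently written the proof would stall at this point, because the stated operation produces nothing new.

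\textbf{A non-issue you flag as the main obstacle.} You worry about keeping the rescaled homotopy inside $\mathcal U(B)\cap\mathcal C(B)\cap\mathcal H(B)$. This is not necessary. Ampleness of a subsemigroup $S\subset\mathcal E(B)$ is used only to guarantee that every class in $KK(B)$ has a representative in $S$ and that two $S$-representatives of the same class are $\sim_S$-equivalent. To conclude that a given class vanishes, it suffices to exhibit \emph{any} chain of homotopies, unitary equivalences, and degenerate additions connecting your representative to a degenerate module; the intermediate modules need not lie in $\mathcal C(B)$. The straight-line path $F_\tau=F+\tau K$ with $K$ a self-adjoint compact operator already consists of Kasparov modules (each $F_\tau^2-\id$ and $F_\tau-F_\tau^*$ is compact), and that is all that is required. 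So the rescaling step is unnecessary, and the last part of your argument --- polar decomposition to a unitary, which is a compact perturbation because $|F_0'|^{-1}-\id$ is compact whenever $F_0'^*F_0'-\id$ is --- is the straightforward part; the real care is needed in the enlargement step discussed above.
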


Let $p\in M_n(B)$ be a projection. Then one easily calculates that $\ind[pB^n\oplus 0,\id,0]=[p]\in K_0(B)$. This description of
pre-images implies the following recognition principle for the generalized Fredholm index.

\begin{corl}\label{corl:recognition principle for the generalized Fredholm index}
  If $\ind'\colon KK(B)\to K_0(B)$ is a group homomorphism which satisfies $\ind'[pB^n\oplus 0,\id,0]=[p]$ for all projections
  $p\in M_n(B)$ and $n\in\N$, then $\ind=\ind'$.\qed
\end{corl}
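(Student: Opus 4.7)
The plan is to exploit the fact, established in the preceding theorem, that $\ind\colon KK(B)\to K_0(B)$ is a group isomorphism. Since any homomorphism out of $KK(B)$ is determined by its values on a generating set, it suffices to show that the classes $[pB^n\oplus 0,\id,0]$, where $p$ ranges over projections in some $M_n(B)$, generate $KK(B)$ as an abelian group.

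To see this, I would argue as follows. By the definition of $K_0(B)$ as the Grothendieck group of finitely generated projective Hilbert $B$-modules, every element of $K_0(B)$ can be written in the form $[p]-[q]$ for projections $p\in M_n(B)$ and $q\in M_m(B)$. Given any class $\eta\in KK(B)$, apply $\ind$ to obtain $\ind\eta=[p]-[q]$ for suitable $p,q$. Setting $\xi=[pB^n\oplus 0,\id,0]-[qB^m\oplus 0,\id,0]\in KK(B)$ and using the computation $\ind[pB^n\oplus 0,\id,0]=[p]$ recalled in the text, we find $\ind\xi=[p]-[q]=\ind\eta$, and since $\ind$ is injective we conclude $\eta=\xi$.

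With this in hand the corollary is immediate: for an arbitrary $\eta\in KK(B)$, write $\eta$ as the difference $\xi$ above, and compute
\[
  \ind'\eta=\ind'[pB^n\oplus 0,\id,0]-\ind'[qB^m\oplus 0,\id,0]=[p]-[q]=\ind\eta,
\]
where the first equality uses that $\ind'$ is a group homomorphism and the second uses the hypothesis on $\ind'$. Hence $\ind=\ind'$.

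There is no real obstacle here; the only point that needs to be invoked non-trivially is that $\ind$ is bijective (not merely surjective), which is exactly the content of the cited theorem. Thus the corollary is essentially a formal consequence of that theorem together with the definition of $K_0(B)$ as a Grothendieck group.
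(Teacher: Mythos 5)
Your proof is correct, and it is exactly the argument the paper implicitly has in mind when it ends the corollary with a \qed: the preceding theorem gives that $\ind$ is an isomorphism, the computation $\ind[pB^n\oplus 0,\id,0]=[p]$ shows that these classes generate $KK(B)$ (since every element of $K_0(B)$ is a difference $[p]-[q]$ and $\ind$ is injective), and any group homomorphism agreeing with $\ind$ on a generating set must equal $\ind$. Your remark that injectivity of $\ind$, not just surjectivity, is the essential input is exactly the right thing to flag.
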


We will next use Corollary \ref{corl:recognition principle for the generalized Fredholm index} to give a new
description of the generalized Fredholm index. In order to do this, we use the description of $KK(B)$ via the ample submonoid
$\mathcal Q(B)\cap\mathcal H(B)$. Thus, we represent an element of $KK(B)$ by a triple $(\mathcal H_B,p,F)$ where
$F=F^*=F^{-1}$. Consequently, $F\in\mathcal L_B(\mathcal H_B)$ is an odd self-adjoint unitary, $p\in\mathcal L_B(\mathcal H_B)$
is an even projection, and $[F,p]\in\mathcal K_B(\mathcal H_B)$. Such a triple is degenerate if and only if $[F,p]=0$.

To any odd self-adjoint unitary $F\in\mathcal L_B(\mathcal H_B)$ we associate the C*-algebra
\[
  Q_F=\left\{x\in\mathcal L_B^{\mathrm{ev}}(\mathcal H_B):[F,x]\in\mathcal K_B(\mathcal H_B)\right\}.
\]
Of course, $(\mathcal H_B,p,F)$ is a Kasparov $B$-module if and only if $p$ is a projection in $Q_F$. We can write
\[
  F=\begin{pmatrix}0&F_0^*\\F_0&0\end{pmatrix}\in\mathcal L_B(\mathcal H_B)=\mathcal L_B(H_B\oplus H_B)
\]
for a unitary operator $F_0\in\mathcal L_B(H_B)$. An element in $\mathcal L_B^{\mathrm{ev}}(\mathcal H_B)$ is of the form
$x=x_0\oplus x_1$ for operators $x_0,x_1\in\mathcal L_B(H_B)$, and $x_0\oplus x_1\in Q_F$ if and only if
$F_0x_0F_0^*-x_1\in\mathcal K_B(H_B)$. This shows that there is a split short exact sequence
\[
  \begin{tikzpicture}
    \matrix (m) [matrix of math nodes, row sep=3em, column sep=3em, commutative diagrams/every cell]
    {
      0 & \mathcal K_B(H_B) & Q_F & \mathcal L_B(H_B) & 0 \\
    };
    \path[-stealth, commutative diagrams/.cd, every label] (m-1-1) edge (m-1-2) (m-1-2) edge node [above] {$i_F$} (m-1-3)
      (m-1-3) edge node [above] {$\pi_F$} (m-1-4) (m-1-4) edge [bend left=30] node [below] {$s_F$} (m-1-3) edge (m-1-5);
  \end{tikzpicture}
\]
of C*-algebras, where the maps are given by $i_F(x)=x\oplus 0$, $\pi_F(x\oplus y)=y$, and $s_F(y)=F_0^*yF_0\oplus y$. Since the
functor $K_0$ is split exact, there is an associated split exact sequence
\[
  \begin{tikzpicture}
    \matrix (m) [matrix of math nodes, row sep=3em, column sep=3em, commutative diagrams/every cell]
    {
      0 & K_0(\mathcal K_B(H_B)) & K_0(Q_F) & K_0(\mathcal L_B(H_B)) & 0 \\
    };
    \path[-stealth, commutative diagrams/.cd, every label] (m-1-1) edge (m-1-2) (m-1-2) edge node [above] {$(i_F)_*$} (m-1-3)
      (m-1-3) edge node [above] {$(\pi_F)_*$} (m-1-4) (m-1-4) edge [bend left=30] node [below] {$(s_F)_*$} (m-1-3) edge (m-1-5);
  \end{tikzpicture}
\]
of K-theory groups. Then $(\pi_F)_*(\id-(s_F)_*(\pi_F)_*)=0$, so there exists a unique group homomorphism $\rho_F\colon
K_0(Q_F)\to K_0(\mathcal K_B(H_B))$ with $(i_F)_*\rho_F=\id-(s_F)_*(\pi_F)_*$. Now for each projection $p\in Q_F$ we put
\[
  \ind_p(F)=\rho_F[p]\in K_0(\mathcal K_B(H_B))\cong K_0(B).
\]
We define $\ind'[\mathcal H_B,p,F]=\ind_p(F)$ for $(\mathcal H_B,p,F)\in\mathcal Q(B)\cap\mathcal H(B)$. The main result of this
section will be that $\ind'\colon KK(B)\to K_0(B)$ is a well-defined group homomorphism and in fact that $\ind=\ind'$. We begin
with a very useful easy observation.

\begin{lem}\label{lem:homomorphism associated to a split exact sequence of abelian groups}
  Let
  \[
    \begin{tikzpicture}
      \matrix (m) [matrix of math nodes, row sep=3em, column sep=3em, commutative diagrams/every cell]
      {
        0 & A & B & C & 0 \\
        0 & A' & B' & C' & 0 \\
      };
      \path[-stealth, commutative diagrams/.cd, every label] (m-1-1) edge (m-1-2) (m-1-2) edge node [above] {$i$} (m-1-3) edge
        node [left] {$f|_A$} (m-2-2) (m-1-3) edge node [above] {$\pi$} (m-1-4) edge node [left] {$f$} (m-2-3) (m-1-4) edge
        (m-1-5) edge node [right] {$\bar f$} (m-2-4) edge [bend left=30] node [below] {$s$} (m-1-3) (m-2-1) edge (m-2-2) (m-2-2)
        edge node [above] {$i'$} (m-2-3) (m-2-3) edge node [above] {$\pi'$} (m-2-4) (m-2-4) edge [bend left=30] node [below]
        {$s'$} (m-2-3) edge (m-2-5);
    \end{tikzpicture}
  \]
  be a commutative diagram of split short exact sequences of abelian groups. Let $\rho\colon B\to A$ be the unique homomorphism
  with $i\rho=\id-s\pi$, and let $\rho'\colon B'\to A'$ be the unique homomorphism with $i'\rho'=\id-s'\pi'$. Then
  $\rho'f=f|_A\rho$.
\end{lem}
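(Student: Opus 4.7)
The plan is to use injectivity of $i'$ and do a short diagram chase. Since $i'$ is the inclusion of $A'$ into $B'$ and is in particular injective, it suffices to show that $i'\rho' f = i' f|_A\rho$.

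First I would unwind the left hand side using the defining relation of $\rho'$:
\[
  i'\rho' f = (\id-s'\pi')f = f - s'\pi' f.
\]
By the commutativity $\pi' f = \bar f\pi$ of the rightmost square, together with the compatibility $fs = s'\bar f$ of the splittings (which is part of the hypothesis that the diagram, with splittings included, commutes), I can rewrite
\[
  s'\pi' f = s'\bar f\pi = fs\pi.
\]
Substituting back and using the defining relation of $\rho$ gives
\[
  i'\rho' f = f - fs\pi = f(\id - s\pi) = fi\rho.
\]
Finally the commutativity $fi = i' f|_A$ of the leftmost square yields $i'\rho' f = i' f|_A\rho$, and then injectivity of $i'$ finishes the argument.

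There is no real obstacle here; the only subtle point is noting that because the splittings $s,s'$ appear as arrows in the stated diagram, commutativity includes the identity $fs = s'\bar f$, which is exactly what is needed to shuffle $f$ past the splittings. Everything else is formal manipulation of the two relations $i\rho = \id - s\pi$ and $i'\rho' = \id - s'\pi'$.
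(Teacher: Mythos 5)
Your proof is correct and is essentially identical to the paper's: both compute $i'\rho'f = f - s'\pi'f = f - s'\bar f\pi = f - fs\pi = fi\rho = i'f|_A\rho$ and conclude by injectivity of $i'$. Your remark that commutativity of the diagram includes $fs = s'\bar f$ is a helpful clarification of a point the paper uses silently.
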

\begin{proof}
  We have $i'\rho'f=f-s'\pi'f=f-s'\bar f\pi=f-fs\pi=fi\rho=i'f|_A\rho$. The claim follows because $i'$ is injective.
\end{proof}

By applying Lemma~\ref{lem:homomorphism associated to a split exact sequence of abelian groups} several times, one can prove
straightforwardly that the map $\ind'$ is well-defined. For example, let us show that $\ind_{U^*pU}(U^*FU)=\ind_p(F)$ whenever
$U\in\mathcal L_B(\mathcal H_B)$ is an even unitary.

Since $U$ is even, we can write $U=U_0\oplus U_1$ for unitaries $U_0,U_1\in\mathcal L_B(H_B)$. Let $\Ad_{U_k}\colon\mathcal
L_B(H_B)\to\mathcal L_B(H_B)$ be the maps given by $\Ad_{U_k}(x)=U_k^*xU_k$. Then we have a commutative diagram
\[
  \begin{tikzpicture}
    \matrix (m) [matrix of math nodes, row sep=4em, column sep=4em, commutative diagrams/every cell] {
      0 & \mathcal K_B(H_B) & Q_F & \mathcal L_B(H_B) & 0 \\
      0 & \mathcal K_B(H_B) & Q_{U^*FU} & \mathcal L_B(H_B) & 0 \\
    };
    \path[-stealth, commutative diagrams/.cd, every label]
      (m-1-1) edge (m-1-2) (m-1-2) edge node [above] {$i_F$} (m-1-3)
      (m-1-3) edge node [above] {$\pi_F$} (m-1-4) (m-1-4) edge (m-1-5)
      (m-1-4) edge [bend left=30] node [below] {$s_F$} (m-1-3)
      (m-2-1) edge (m-2-2) (m-2-2) edge node [above] {$i_{U^*FU}$} (m-2-3)
      (m-2-3) edge node [above] {$\pi_{U^*FU}$} (m-2-4) (m-2-4) edge (m-2-5)
      (m-2-4) edge [bend left=30] node [below] {$s_{U^*FU}$} (m-2-3)
      (m-1-2) edge node [left] {$\Ad_{U_0}$} (m-2-2) (m-1-3) edge node [left] {$\Ad_{U_0}\oplus\Ad_{U_1}$} (m-2-3)
      (m-1-4) edge node [right] {$\Ad_{U_1}$} (m-2-4);
  \end{tikzpicture}
\]
of split short exact sequences of C*-algebras. Let $\rho\colon K_0(Q_F)\to K_0(\mathcal K_B(H_B))$ and
$\rho'\colon K_0(Q_{U^*FU})\to K_0(\mathcal K_B(H_B))$ be the respective splitting homomorphisms. Then Lemma~\ref{lem:homomorphism associated to a split exact sequence of abelian groups} shows that
\begin{align*}
  (\Ad_{U_0})_*\ind_p(F)&=(\Ad_{U_0})_*\rho[p]=\rho'(\Ad_{U_0}\oplus\Ad_{U_1})_*[p]\\
                        &=\rho'[U^*pU]=\ind_{U^*pU}(U^*FU).
\end{align*}
The claim follows from the fact that $(\Ad_{U_0})_*$ equals the identity on $K_0(\mathcal K_B(H_B))$.

\begin{thm}\label{thm:ind=ind'}
  The maps $\ind\colon KK(B)\to K_0(B)$ and $\ind'\colon KK(B)\to K_0(B)$ coincide.
\end{thm}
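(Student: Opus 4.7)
The plan is to invoke Corollary~\ref{corl:recognition principle for the generalized Fredholm index}, so that it suffices to check: (a) that $\ind'$ is well-defined on all of $KK(B)$, (b) that it is a group homomorphism, and (c) that $\ind'[pB^n\oplus 0,\id,0]=[p]$ for every projection $p\in M_n(B)$.

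For (a), unitary equivalence is already treated in the excerpt. Homotopy invariance and the vanishing on degenerate modules follow the same pattern: set up a commutative diagram of split short exact sequences---for a homotopy $(\tilde p_\tau,F_\tau)$, by working over $C(I)$ with the pointwise split exact sequence for $F_\tau$ and then evaluating at the two endpoints---and read off the conclusion from Lemma~\ref{lem:homomorphism associated to a split exact sequence of abelian groups}. For (b), given representatives $(\mathcal H_B,p,F),(\mathcal H_B,p',F')\in\mathcal Q(B)\cap\mathcal H(B)$, the direct sum of their defining split exact sequences maps canonically, via the obvious inclusions, into the split exact sequence for $F\oplus F'$ (after identifying $\mathcal H_B\oplus\mathcal H_B$ with $\mathcal H_B$ by Kasparov stabilization); Lemma~\ref{lem:homomorphism associated to a split exact sequence of abelian groups} then yields $\rho_{F\oplus F'}[p\oplus p']=\rho_F[p]+\rho_{F'}[p']$ under the standard identifications.

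The main step is (c). Embed $pB^n$ into the even summand of $\mathcal H_B=H_B\oplus H_B$ and let $\tilde p_0\in\mathcal K_B(H_B)$ be the projection with range $pB^n$. Take $F_0=\id_{H_B}$, so that $F=\begin{pmatrix}0&\id\\\id&0\end{pmatrix}$ is an odd self-adjoint unitary on $\mathcal H_B$, and put $\tilde p=\tilde p_0\oplus 0$. The commutator $[\tilde p,F]$ has off-diagonal blocks $\pm\tilde p_0$, which are compact, so $(\mathcal H_B,\tilde p,F)\in\mathcal Q(B)\cap\mathcal H(B)$. The linear path $(\mathcal H_B,\tilde p,\tau F)_{\tau\in[0,1]}$ is a homotopy in $\mathcal E(B)$ connecting this to $(\mathcal H_B,\tilde p,0)$, which in turn differs from $(pB^n\oplus 0,\id,0)$ only by the degenerate summand $(\mathcal H_B\ominus(pB^n\oplus 0),0,0)$; so the three represent the same class in $KK(B)$. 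To evaluate $\ind'$, observe that $(\pi_F)_*[\tilde p]=0$ because $\pi_F(\tilde p)=0$, whence the defining equation $(i_F)_*\rho_F[\tilde p]=[\tilde p]=(i_F)_*[\tilde p_0]$ combined with injectivity of $(i_F)_*$ gives $\rho_F[\tilde p]=[\tilde p_0]$, which equals $[p]$ under $K_0(\mathcal K_B(H_B))\cong K_0(B)$.

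The anticipated main obstacle is locating the right representative of $[pB^n\oplus 0,\id,0]$ inside $\mathcal Q(B)\cap\mathcal H(B)$, since the naive tuple does not itself lie there; once the choice above is fixed, the computation of $\rho_F[\tilde p]$ reduces to the observation that $\tilde p$ has trivial image in $\mathcal L_B(H_B)$.
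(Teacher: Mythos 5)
Your proof is correct and follows essentially the same strategy as the paper: invoke Corollary~\ref{corl:recognition principle for the generalized Fredholm index}, locate a representative of $[pB^n\oplus 0,\id,0]$ in $\mathcal Q(B)\cap\mathcal H(B)$ whose projection lands in the compact ideal, and read off $\rho_F$ from the observation that $\pi_F$ kills it. The only cosmetic difference is the route to the representative (a single degenerate summand followed by the homotopy $\tau\mapsto\tau F$, versus the paper's chain of degenerate additions, a compact perturbation, and a stabilization), and both sources leave the well-definedness of $\ind'$ as a routine application of Lemma~\ref{lem:homomorphism associated to a split exact sequence of abelian groups}.
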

\begin{proof}
  By Corollary~\ref{corl:recognition principle for the generalized Fredholm index} it suffices to show that
  $\ind'[pB^n\oplus 0,\id,0]=[p]$ if $p\in M_n(B)$ is any projection. Therefore, we have to represent $[pB^n\oplus 0,\id,0]$ by a
  Kasparov $B$-module in $\mathcal Q(B)\cap\mathcal H(B)$. The construction goes as follows: Firstly, we add on the degenerate
  module $((1-p)B^n\oplus 0,0,0)$ to obtain $(B^n\oplus 0,p,0)$. Next, add on $(0\oplus B^n,0\oplus 0,0)$ and perturb
  compactly to obtain
  \[
    \left(B^n\oplus B^n,p\oplus 0,\begin{pmatrix}0&1\\1&0\end{pmatrix}\right).
  \]
  Finally, stabilize by adding $(\mathcal H_B,0,\begin{psmallmatrix}0&1\\1&0\end{psmallmatrix})$, and obtain
  \[
    \left((B^n\oplus H_B)\oplus(B^n\oplus H_B),(p\oplus 0)\oplus 0,\begin{pmatrix}0&1\\1&0\end{pmatrix}\right).
  \]
  This is equivalent to $(\mathcal H_B,p'\oplus 0,F)$ for some $p'\in\mathcal K_B(H_B)$ such that $[p']=[p]\in K_0(B)$. Now in
  the short exact sequence
  \[
    \begin{tikzpicture}
      \matrix (m) [matrix of math nodes, row sep=3em, column sep=3em, commutative diagrams/every cell]
      {
        0 & K_0(\mathcal K_B(H_B)) & K_0(Q_F) & K_0(\mathcal L_B(H_B)) & 0 \\
      };
      \path[-stealth, commutative diagrams/.cd, every label]
        (m-1-1) edge (m-1-2) (m-1-2) edge node [above] {$(i_F)_*$} (m-1-3) (m-1-3) edge node [above] {$(\pi_F)_*$} (m-1-4)
        (m-1-4) edge (m-1-5) edge [bend left=30] node [below] {$(s_F)_*$} (m-1-3);
    \end{tikzpicture}
  \]
  we have $(i_F)_*[p']=[p'\oplus 0]=(\id-s_F\pi_F)_*[p'\oplus 0]=(i_F)_*\ind_{p'\oplus 0}(F)$, and therefore
  $\ind'[pB^n\oplus 0,\id,0]=\ind_{p'\oplus 0}(F)=[p']=[p]$ as claimed.
\end{proof}

\section{The index bundle}

In this section, we will review the definition of the index of a bundle of Fredholm operators. We will then use
Theorem~\ref{thm:ind=ind'} in order to give a different description of this index.

The general setup is as follows: Let $X$ be a compact Hausdorff space, and let $p\colon E\to X$, $p'\colon E'\to X$ be Hilbert
$B$-module bundles. A \emph{Hilbert $B$-module bundle morphism} from $E$ to $E'$ is a map $F\colon E\to E'$ between the total
spaces such that $p'\circ F=p$, and such that for all local trivializations $\Phi\colon U\times W\to E|_U$ and $\Phi'\colon
U'\times W'\to E'|_{U'}$ there exists a continuous map $f\colon U\cap U'\to\mathcal L_B(W,W')$ with
\[
  f\Phi(x,\xi)=\Phi'(x,f(x)\xi)
\]
for all $x\in U\cap U'$ and $\xi\in W$. We denote by $\mathcal L_B(E,E')$ the set of Hilbert $B$-module bundle morphisms $E\to
E'$, and abbreviate $\mathcal L_B(E)=\mathcal L_B(E,E)$. By definition, $F\in\mathcal L_B(E,E')$ restricts to adjointable maps
$F_x\in\mathcal L_B(E_x,E_x')$ on the fibers, and taking fiberwise adjoints defines a morphism $F^*\in\mathcal L_B(E',E)$.
Obviously, $\id^*=\id$ and $(GF)^*=F^*\circ G^*$. Of course, $F\colon E\to E'$ is an isometric isomorphism if and only if
$F\in\mathcal L_B(E,E')$, $FF^*=\id$, and $F^*F=\id$.

We will also need to consider the Hilbert $C(X;B)$-module $\Gamma(E)$ of continuous sections of $E\to X$. Postcomposition with
$F\in\mathcal L_B(E,E')$ defines an adjointable operator $F_*\in\mathcal
L_{C(X;B)}(\Gamma(E),\Gamma(E'))$, and its adjoint is given by postcomposition with $F^*$. Furthermore, $G_*F_*=(GF)_*$ and
$\id_*=\id$. We will need the following observation, which is a simple consequence of compactness of $X$:

\begin{lem}\label{lem:fiberwise compact operators}
  Let $p\colon E\to X$ and $p'\colon E'\to X$ be Hilbert $B$-module bundles over a compact Hausdorff space $X$, and let $G\colon
  E\to E'$ be a map such that every $x\in X$ possesses a neighborhood $U_x\subset X$, local
  trivializations $\Phi\colon U_x\times W_x\to E|_{U_x}$,
  $\Phi'\colon U_x'\times W_x'\to E'|_{U_x'}$, and a continuous map $G_x\colon U_x\to\mathcal K_B(W_x,W_x')$ with
  \[
    G(\Phi_x(y,\xi))=\Phi_x'(y,G_x(y)\xi)
  \]
  for all $y\in U_x$ and all $\xi\in W_x$. Then $G_*\in\mathcal K_{C(X;B)}(\Gamma(E),\Gamma(E'))$.\qed
\end{lem}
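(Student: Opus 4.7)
The plan is to reduce the statement to a finite cover via compactness of $X$, then localize with a partition of unity, and finally approximate each localized piece by operators that come from rank-one operators on sections.

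First I would extract a finite subcover $U_1,\dots,U_n$ of $X$ from the neighborhoods $\{U_x\}_{x\in X}$, with associated trivializations $\Phi_i\colon U_i\times W_i\to E|_{U_i}$, $\Phi_i'\colon U_i\times W_i'\to E'|_{U_i}$ (shrinking the $U_x,U_x'$ to a common neighborhood and refining if necessary), and continuous maps $G_i\colon U_i\to\mathcal K_B(W_i,W_i')$ representing $G$ locally. Choose a continuous partition of unity $\varphi_1,\dots,\varphi_n\in C(X)$ subordinate to this cover. Viewing the $\varphi_i$ as central multipliers of $C(X;B)$, they act on $\Gamma(E)$ and $\Gamma(E')$ via pointwise multiplication, giving adjointable $C(X;B)$-module maps $M_{\varphi_i}$. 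Because $\sum_i M_{\varphi_i}=\id$, we have $G_*=\sum_i G_*M_{\varphi_i}$, so it suffices to prove that each $G_*M_{\varphi_i}$ lies in $\mathcal K_{C(X;B)}(\Gamma(E),\Gamma(E'))$.

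Next I would identify, for fixed $i$, the operator $G_*M_{\varphi_i}$ with a $C(X;B)$-module map built from the continuous function $\varphi_i G_i\colon U_i\to\mathcal K_B(W_i,W_i')$, extended by zero outside $\overline{\supp\varphi_i}\subset U_i$. The trivializations identify sections supported in $U_i$ with elements of $C(U_i;B)\otimes W_i$ (and similarly for $E'$), under which $G_*M_{\varphi_i}$ acts as pointwise application of $\varphi_i G_i$. Since $\overline{\supp\varphi_i}$ is compact and $G_i$ is continuous with values in $\mathcal K_B(W_i,W_i')$, the function $\varphi_i G_i$ can be uniformly approximated on $U_i$ by continuous functions of the form $y\mapsto\sum_{j=1}^{N}\theta_{a_j(y),b_j(y)}$, where $a_j\in C(U_i;W_i')$, $b_j\in C(U_i;W_i)$, and $\theta_{a,b}(\xi)=a\langle b,\xi\rangle_B$ is the rank-one operator associated with $a$ and $b$. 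This reduction step uses the definition of $\mathcal K_B(W_i,W_i')$ as the norm closure of the span of such rank-one operators, combined with a standard partition-of-unity/Stone--Weierstrass argument to make the coefficient vectors continuous in $y$.

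For each such finite-rank approximation, the associated operator on sections is precisely $\sum_{j=1}^{N}\theta_{\tilde a_j,\tilde b_j}\in\mathcal K_{C(X;B)}(\Gamma(E),\Gamma(E'))$, where $\tilde a_j,\tilde b_j$ are the sections obtained from $a_j,b_j$ via $\Phi_i'$ and $\Phi_i$ (and extended by zero outside $U_i$, which is well-defined since $\varphi_i$ vanishes there). Uniform convergence of these approximations to $\varphi_i G_i$ translates into norm convergence of the associated operators on $\Gamma(E)$, so $G_*M_{\varphi_i}$ is a norm limit of $C(X;B)$-compact operators and hence compact. Summing over $i$ yields the claim.

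The main obstacle I anticipate is the approximation step in the second paragraph: one must be sure that a continuous $\mathcal K_B$-valued function can be uniformly approximated by continuous functions valued in finite sums of rank-one operators, with \emph{continuous} coefficient vectors. This is not quite immediate from the pointwise definition of $\mathcal K_B$, but follows by covering $\overline{\supp\varphi_i}$ with small open sets on which $G_i$ is nearly constant (up to a prescribed tolerance), approximating the representative on each piece by a finite-rank operator with constant coefficients, and gluing via a subordinate partition of unity. Everything else is routine bookkeeping with partitions of unity and Hilbert module compact operators.
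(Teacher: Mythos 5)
Your proposal is essentially the natural proof the paper has in mind (the paper omits it, calling the lemma ``a simple consequence of compactness of $X$''), and the strategy---finite subcover, partition of unity, localized approximation by rank-one operators---is correct. The one place you gloss over is in lifting the local finite-rank approximants to global rank-one operators on $\Gamma(E)$: you assert that both $\tilde a_j$ and $\tilde b_j$ ``extend by zero outside $U_i$, which is well-defined since $\varphi_i$ vanishes there,'' but the cutoff $\varphi_i$ (and the inner partition of unity $\psi_m$ used in the gluing step) is absorbed into only \emph{one} of the two coefficient vectors of $\theta_{a_j,b_j}$, so the other is merely continuous on the open set $U_i$ and need not vanish near its frontier; its zero-extension is then not obviously a continuous section. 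This is easily repaired: either split the nonnegative cutoff symmetrically, writing $\varphi_i\psi_m=(\varphi_i\psi_m)^{1/2}\cdot(\varphi_i\psi_m)^{1/2}$ so that both coefficient vectors vanish outside a compact subset of $U_i$, or observe that $\theta_{\tilde a,\tilde b}$ depends on $\tilde b$ only through its restriction to $\supp\tilde a$, so $\tilde b_j$ may be made global by multiplying with a further bump function that equals $1$ on $\supp\varphi_i$ and is supported in $U_i$. With this small patch the argument is complete.
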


\begin{dfn}
  A \emph{bundle of (generalized) Fredholm operators} over $X$ consists of a graded Hilbert $B$-module bundle $E\to X$, where
  $B$ is a unital C*-algebra and the fibers of $E$ are countably generated, and of an odd self-adjoint bundle morphism $F\in\mathcal
  L_B(E)$ with the property that $F^2-\id$ restricts to compact operators on all fibers.
\end{dfn}

It is clear that an $\epsilon$-flat Fredholm bundle, as defined before, is a bundle of Fredholm operators. Now if $(E,F)$ is a
bundle of Fredholm operators, then Lemma~\ref{lem:fiberwise compact operators} implies that $F_*^2-\id\in\mathcal
K_{C(X;B)}(\Gamma(E))$.

\begin{prop}
  Let $(E,F)$ be a bundle of Fredholm operators over a compact Hausdorff base space $X$. Then the triple $\hat
  E=(\Gamma(E),\id,F_*)$ is a Kasparov $C(X;B)$-module in the sense of definition \ref{dfn:kasparov modules}, and therefore
  defines a class $[\hat E]\in KK(C(X;B))$. We define
  \[
    \ind F=\ind[\hat E]\in K_0(C(X;B)).
  \]
\end{prop}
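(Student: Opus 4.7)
The plan is to verify the three conditions in Definition~\ref{dfn:kasparov modules} directly for $(\Gamma(E), \id, F_*)$. Two of them are automatic: $[\id, F_*] = 0$ is trivial, and $\id \cdot (F_* - F_*^*) = F_* - F_*^* = 0$ since $F = F^*$ implies $F_*^* = (F^*)_* = F_*$. What genuinely needs verification is (a) that $\Gamma(E)$ carries a countably generated $\Z_2$-graded Hilbert $C(X;B)$-module structure making $F_*$ odd, and (b) that $F_*^2 - \id$ is $C(X;B)$-compact.

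For (a), I would equip $\Gamma(E)$ with the pointwise Hilbert module structure: the inner product $\langle s, t\rangle(x) = \langle s(x), t(x)\rangle_{E_x}$ is continuous because the fiberwise inner product is continuous in any local trivialization, and the $C(X;B)$-action is pointwise. The $\Z_2$-grading of $E$ (which the local trivializations preserve by assumption) induces a bundle splitting $E = E^0 \oplus E^1$ and hence a grading $\Gamma(E) = \Gamma(E^0) \oplus \Gamma(E^1)$. Since $F$ is an odd bundle morphism, it exchanges $E^0$ and $E^1$ fiberwise, so $F_*$ exchanges $\Gamma(E^0)$ and $\Gamma(E^1)$ and is therefore odd. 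For countable generation, I would use compactness of $X$ to extract a finite trivializing cover $\{U_i\}$ with a subordinate partition of unity $\{\chi_i\}$; given a countable generating set $\{w_k\}$ for the typical fiber $W$, the sections $\chi_i \cdot \Phi_i(\cdot, w_k)$ form a countable family whose $C(X;B)$-linear span is dense in $\Gamma(E)$.

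For (b), I would invoke Lemma~\ref{lem:fiberwise compact operators} applied to the bundle morphism $G := F^2 - \id \in \mathcal L_B(E)$. In any local trivialization $\Phi_v \colon S_v \times W \to E|_{S_v}$, the operator $F$ is represented by a continuous map $F_v \colon S_v \to \mathcal L_B(W)$, so $G$ is locally represented by $F_v^2 - \id$, which is again continuous. The hypothesis that $F^2 - \id$ restricts to compact operators on every fiber tells us that these local representatives take values in $\mathcal K_B(W)$, and since $\mathcal K_B(W)$ is norm-closed in $\mathcal L_B(W)$, they are continuous as maps into $\mathcal K_B(W)$. The hypotheses of Lemma~\ref{lem:fiberwise compact operators} are therefore satisfied, giving $F_*^2 - \id = (F^2 - \id)_* \in \mathcal K_{C(X;B)}(\Gamma(E))$, which finishes the verification. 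I do not expect any substantive obstacle; the mildly delicate step is the countable generation statement, which requires compactness of $X$ to turn countable generation of the fiber into countable generation of the section module.
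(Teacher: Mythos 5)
Your proof is correct and takes essentially the same approach as the paper: the paper's (terser) proof also notes that oddness and self-adjointness of $F_*$ are immediate, deduces $F_*^2-\id\in\mathcal K_{C(X;B)}(\Gamma(E))$ from Lemma~\ref{lem:fiberwise compact operators}, and reduces the remaining work to countable generation of $\Gamma(E)$, which follows from compactness of $X$. Your writeup just supplies the details (partition-of-unity argument for countable generation, the closedness of $\mathcal K_B(W)$ in $\mathcal L_B(W)$ for the local continuity of $F_v^2-\id$) that the paper leaves implicit.
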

\begin{proof}
  Since $F_*$ is odd and self-adjoint, and since $F_*^2-\id$ is compact, it only remains to prove that $\Gamma(E)$ is countably
  generated, which again is a straightforward consequence of the compactness of $X$.
\end{proof}

The index $\ind F$ can be described as follows: Since $F\in\mathcal L_B(E)$ is odd and self-adjoint, we can write
\[
  F=\begin{pmatrix}0&F_0^*\\F_0&0\end{pmatrix}\in\mathcal L_B\left(E^{(0)}\oplus E^{(1)}\right)
\]
with respect to the grading $E=E^{(0)}\oplus E^{(1)}$. Thus $F_0\in\mathcal L_B(E^{(0)},E^{(1)})$. One can now perturb $F_0$
compactly to a partial isometry $F_1\in\mathcal L_B(E^{(0)},E^{(1)})$, so that $p=\id-F_1^*F_1$ and $q=\id-F_1F_1^*$ are
fiberwise projections, whose images then form bundles $E_p=pE^{(0)}$ and $E_q=qE^{(1)}$ of finitely generated projective Hilbert
$B$-modules over $X$. Now the definition of the generalized Fredholm index implies that
\[
  \ind F=[E_p]-[E_q],
\]
thus recovering the definition of the index bundle introduced by Jänich
\cite{jaenich-vektorraumbuendel-raum-der-fredholm-operatoren}.

By Theorem~\ref{thm:ind=ind'}, we also have $\ind F=\ind'[\hat E]\in K_0(C(X;B))$. In the remainder of this section, we will use
this observation in order to give yet another description of the index of an $\epsilon$-flat Fredholm bundle over a simplicial
complex. In order to do this, it is useful to modify the Kasparov $C(X;B)$-module $\hat E$ in such a way that the relevant
index-theoretic information is completely contained in the projection entry. This is done by a well-known procedure
\cite[cf.][Section~17]{blackadar-ktheory-for-operator-algebras}, which we will describe next.

We fix a unital C*-algebra $B$, a countably generated graded Hilbert $B$-module $W$, and an
odd operator $F\in\mathcal L_B(W)$. Let $\phi\colon\R\to\R$ be the function given by
\[
  \phi(t)=\begin{cases}-1,&t\leq -1,\\t,&-1\leq t\leq 1,\\1,&t\geq 1.\end{cases}
\]
Since $\frac 12(F+F^*)$ is self-adjoint, we may define another operator $C(F)=\phi(\frac 12(F+F^*))$ by continuous functional
calculus in the C*-algebra $\mathcal L_B(W)$. It is easy to see that $C(F)$ is an odd self-adjoint operator with $\|C(F)\|\leq
1$. We put
\[
  Q(F)=\begin{pmatrix}C(F)&\sqrt{1-C(F)^2}\\\sqrt{1-C(F)^2}&-C(F)\end{pmatrix}\in\mathcal L_B(W\oplus W^{\mathrm{op}}),
\]
which is an odd self-adjoint unitary,\footnote{The graded Hilbert $B$-module $W^{\mathrm{op}}$ is given by the Hilbert $B$-module
  $W$, but with opposite grading.} and
\[
  U(F)=\frac 1{\sqrt 2}\begin{pmatrix}1&Q(F)\\Q(F)&-1\end{pmatrix}\in\mathcal L_B\left((W\oplus W^{\mathrm{op}})\oplus (W\oplus
  W^{\mathrm{op}})^{\mathrm{op}}\right),
\]
which is an even self-adjoint unitary. The crucial properties of this construction are summarized in the following proposition
whose proof is straightforward.

\begin{prop}\label{prop:properties of U(F)}
  Let $W$ and $F$ be as above, and put
  \[
    T=\begin{pmatrix}0&1\\1&0\end{pmatrix}\in\mathcal L_B\left((W\oplus W^{\mathrm{op}})\oplus(W\oplus
    W^{\mathrm{op}})^{\mathrm{op}}\right).
  \]
  Assume furthermore that $H\in\mathcal L_B(W)$ is such that $[H,F]$, $[H,F^*]$, $H(F^2-\id)$, and $H(F^*-F)$ are all contained
  in $\mathcal K_B(W)$. Then also the operators
  \[
    \left((H\oplus 0)\oplus 0\right)\cdot\left(U(F)^*TU(F)-(F\oplus(-F))\oplus((-F)\oplus F)\right)
  \]
  and
  \[
    \left(U(F)^*TU(F)-(F\oplus(-F))\oplus((-F)\oplus F)\right)\cdot\left((H\oplus 0)\oplus 0\right)
  \]
  are compact. In particular, the commutator $[(H\oplus 0)\oplus 0,U(F)^*TU(F)]$ is compact.\qed
\end{prop}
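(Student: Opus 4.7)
My plan is to first compute $U(F)^{\ast}TU(F)$ explicitly so as to reduce the two asserted compactness statements to scalar-valued claims about $H$ and the functional calculus of $G = \tfrac12(F+F^{\ast})$. Since $C(F)$ and $\sqrt{1-C(F)^2}$ both arise from continuous functional calculus applied to the same self-adjoint $G$, they commute; a direct $2\times 2$ block calculation then gives $Q(F)^2 = \id$ and hence
\[
U(F)^{\ast}TU(F) \;=\; \begin{pmatrix} Q(F) & 0 \\ 0 & -Q(F) \end{pmatrix}.
\]
Writing $A' = (F\oplus(-F))\oplus((-F)\oplus F)$ and noting $(-F)\oplus F = -(F\oplus(-F))$, subtraction yields
\[
U(F)^{\ast}TU(F) - A' \;=\; \begin{pmatrix} R & 0 \\ 0 & -R \end{pmatrix}, \qquad
R = \begin{pmatrix} C(F)-F & \sqrt{1-C(F)^2} \\ \sqrt{1-C(F)^2} & F-C(F) \end{pmatrix}.
\]
Multiplying by $(H\oplus 0)\oplus 0$ on the left (respectively right) annihilates all blocks except a single $2\times 2$ submatrix, whose nonzero entries are $H(C(F)-F)$ and $H\sqrt{1-C(F)^2}$ (respectively $(C(F)-F)H$ and $\sqrt{1-C(F)^2}\,H$). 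So it suffices to show these four operators are compact.

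The hypotheses translate, in the Calkin algebra $\mathcal Q = \mathcal L_B(W)/\mathcal K_B(W)$, to the statements that $\bar H$ commutes with $\bar F$ and $\bar F^{\ast}$, that $\bar H \bar F = \bar H \bar F^{\ast}$, and that $\bar H \bar F^{2} = \bar H$. Expanding $4\bar G^2 = \bar F^2 + \bar F\bar F^{\ast} + \bar F^{\ast}\bar F + \bar F^{\ast 2}$ and repeatedly replacing $\bar H \bar F^{\ast}$ by $\bar H \bar F$ reduces each of the four summands to $\bar H$, hence $\bar H \bar G^2 = \bar H$; commutativity of $\bar H$ with $\bar G$ then upgrades this to both $H(G^2-1)$ and $(G^2-1)H$ being compact. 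Now the key step is a functional calculus lemma: for any continuous $f\colon\sigma(G)\to\mathbb R$ with $f(\pm 1) = 0$, both $Hf(G)$ and $f(G)H$ are compact. The idea is to uniformly approximate $f$ on the compact set $\sigma(G) \subset [-\|F\|,\|F\|]$ by continuous functions of the form $(t^2-1)r_n(t)$ with $r_n$ continuous and bounded; concretely, multiply $f$ by a continuous cutoff vanishing in shrinking neighbourhoods of $\pm 1$ and divide the result by $t^2-1$ on its support. Then $Hf(G) = \lim_n H(G^2-1)r_n(G)$ is a norm limit of compact operators, hence compact.

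Applying this lemma to $f(t) = \phi(t)-t$ yields $H(C(F)-G) \in \mathcal K_B(W)$, and combined with $H(G-F) = \tfrac12 H(F^{\ast}-F) \in \mathcal K_B(W)$ it gives $H(C(F)-F) \in \mathcal K_B(W)$; applied to $f(t) = \sqrt{1-\phi(t)^2}$, which also vanishes at $\pm 1$, it gives $H\sqrt{1-C(F)^2} \in \mathcal K_B(W)$. The right-multiplication case is identical. For the ``in particular'' clause I decompose
\[
\bigl[(H\oplus 0)\oplus 0,\; U(F)^{\ast}TU(F)\bigr]
\;=\; \bigl[(H\oplus 0)\oplus 0,\; U(F)^{\ast}TU(F)-A'\bigr] + \bigl[(H\oplus 0)\oplus 0,\; A'\bigr];
\]
the first summand is the difference of the two operators just shown to be compact, and the second simplifies to $([H,F]\oplus 0)\oplus 0$, which is compact by hypothesis. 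The only nontrivial step is the functional-calculus approximation: one must verify that the cutoff/division procedure really produces continuous $r_n$ across the zeros of $t^2-1$ and that the remainder tends to zero in supremum norm on $\sigma(G)$—after that, everything is linear algebra.
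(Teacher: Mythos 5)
Your proof is correct, and it is the natural direct argument that the paper's "whose proof is straightforward" clearly has in mind: compute $U(F)^*TU(F)=Q(F)\oplus(-Q(F))$ using $Q(F)^2=\id$, reduce the two compactness claims to $H(C(F)-F)$, $H\sqrt{1-C(F)^2}$ and their right-multiplied counterparts, pass to the Calkin algebra to get $\bar H(\bar G^2-1)=(\bar G^2-1)\bar H=0$ with $G=\tfrac12(F+F^*)$, and then invoke the closure of the principal ideal $(t^2-1)$ in $C([-M,M])$ being the functions vanishing at $\pm 1$. All the intermediate identities (the block algebra, the Calkin-algebra reductions of the four hypotheses, the uniform approximation by $(t^2-1)r_n$ via a cutoff, and the handling of the commutator via $A'$) check out.
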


Now let $B$ be a unital C*-algebra, and let $(E,F_E)$ be an $\epsilon$-flat Fredholm bundle over a finite simplicial complex $X$,
where the fiber $W$ of $E$ is a countably generated Hilbert $B$-module. Let $V_X=\{v_1,\ldots,v_n\}\subset X$ be the (finite)
set of vertices of $X$. We use the abbreviation $S_k=S_{v_k}$ for the open star around $v_k$. Consider the local trivializations
$\Phi_k=\Phi_{v_k}\colon S_k\times W\to E|_{S_k}$ and the transition functions $\Psi_{jk}=\Psi_{v_j,v_k}\colon S_j\cap
S_k\to\mathcal L_B(W)$. Thus,
\[
  \Phi_k(x,\xi)=\Phi_j(x,\Psi_{jk}(x)\xi)
\]
for all $x\in S_j\cap S_k$ and $\xi\in W$. Similarly, we put $F_k=F_{v_k}\colon S_k\to\mathcal L_B(W)$, so that
\[
  F_E\left(\Phi_k(x,\xi)\right)=\Phi_k\left(x,F_k(x)\xi\right)
\]
for all $x\in S_k$ and $\xi\in W$. Further, we consider the operator $F=F_1(v_1)\in\mathcal L_B(W)$.

We fix an even isometric isomorphism $U\colon W\oplus\mathcal H_B\to\mathcal H_B$ (which exists by Kasparov's Stabilization
Theorem). Now we define
\[
  \Psi_{jk}'(x)=U\left(\Psi_{jk}(x)\oplus 0\right)U^*\in\mathcal L_B(\mathcal H_B)
\]
for $x\in S_j\cap S_k$, and
\[
  F'=U\left(F\oplus\id\right)U^*\in\mathcal L_B(\mathcal H_B).
\]
Using the assumption that $F-F_k(x)$ is compact for all $k$ and all $x\in S_k$, one obtains

\begin{lem}\label{lem:[Psi',F'] is compact}
  For all $j$, all $k$, and all $x\in S_j\cap S_k$, the operator $[\Psi_{jk}'(x),F']$ is compact.\qed
\end{lem}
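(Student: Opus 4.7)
The plan is to extract compactness of $[F,\Psi_{jk}(x)]$ from the compatibility between $F_E$ and the local trivializations, and then push it through the stabilization by $U$.

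First, I would derive the algebraic identity
\[
  F_j(x)\Psi_{jk}(x)=\Psi_{jk}(x)F_k(x)
\]
on $W$ for every $x\in S_j\cap S_k$. This falls out by computing $F_E(\Phi_k(x,\xi))$ in two ways: directly via the defining relation $F_E(\Phi_k(x,\xi))=\Phi_k(x,F_k(x)\xi)$, and after rewriting $\Phi_k(x,\xi)=\Phi_j(x,\Psi_{jk}(x)\xi)$ and applying the defining relation for $F_j$. Equating and using injectivity of $\Phi_j(x,-)$ on fibers gives the identity.

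Second, I would use the hypothesis that $F-F_k(x)\in\mathcal K_B(W)$ for all vertices and all $x$ in the corresponding star. Writing $F_k(x)=F+K_k(x)$ with $K_k(x)\in\mathcal K_B(W)$, the identity above rearranges to
\[
  [F,\Psi_{jk}(x)]=\Psi_{jk}(x)K_k(x)-K_j(x)\Psi_{jk}(x),
\]
whose right-hand side lies in $\mathcal K_B(W)$ because $\mathcal K_B(W)$ is a two-sided ideal in $\mathcal L_B(W)$.

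Finally, I would transport this to the stabilized picture. A direct computation shows
\[
  [F\oplus\id,\,\Psi_{jk}(x)\oplus 0]=[F,\Psi_{jk}(x)]\oplus 0\in\mathcal K_B(W\oplus\mathcal H_B),
\]
and conjugation by the even unitary $U\colon W\oplus\mathcal H_B\to\mathcal H_B$ preserves compactness, so
\[
  [F',\Psi_{jk}'(x)]=U\bigl([F,\Psi_{jk}(x)]\oplus 0\bigr)U^*\in\mathcal K_B(\mathcal H_B),
\]
which is the claim. There is no real obstacle; the only substantive point is the first step, and that reduces to unwinding the definitions.
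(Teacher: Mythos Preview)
Your argument is correct and is exactly the computation the paper has in mind: it omits the proof entirely, noting only that the claim follows ``using the assumption that $F-F_k(x)$ is compact for all $k$ and all $x\in S_k$,'' which is precisely your second step after establishing the intertwining relation $F_j(x)\Psi_{jk}(x)=\Psi_{jk}(x)F_k(x)$ from the definitions.
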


We write $\mathcal H_B'=(\mathcal H_B\oplus\mathcal H_B^{\mathrm{op}})\oplus(\mathcal H_B\oplus\mathcal
H_B^{\mathrm{op}})^{\mathrm{op}}$. Then the construction described above yields an even self-adjoint unitary $U(F')\in\mathcal
L_B(\mathcal H_B')$, and we define
\[
  \Psi_{jk}''(x)=U(F')\left((\Psi_{jk}'(x)\oplus 0)\oplus 0\right)U(F')^*\in\mathcal L_B(\mathcal H_B')
\]
for $x\in S_j\cap S_k$. Let $T=\begin{psmallmatrix}0&1\\1&0\end{psmallmatrix}\in\mathcal L_B(\mathcal H_B')$ be as in
Proposition~\ref{prop:properties of U(F)}. Since $[\Psi_{jk}'(x),F']$ is compact by Lemma~\ref{lem:[Psi',F'] is compact}, also
$[U(F')^*\Psi_{jk}''(x)U(F'),(F'\oplus(-F'))\oplus((-F')\oplus F')]$ is compact, and
Proposition~\ref{prop:properties of U(F)} implies that
\[
  \left[U(F')^*\Psi_{jk}''(x)U(F'),U(F')^*TU(F')\right]
\]
is compact as well. Thus, $[\Psi_{jk}''(x),T]\in\mathcal K_B(\mathcal H_B')$ for all $x\in S_j\cap S_k$.

We fix an even isometric isomorphism $V\colon\mathcal H_B'\to\mathcal H_B$ (which again exists since $\mathcal H_B'$ is countably
generated), and consider $T'=VTV^*\in\mathcal L_B(\mathcal H_B)$. Then $T'$ is an odd self-adjoint unitary, and we may consider
the C*-algebra
\[
  Q=Q_{T'}=\left\{x\in\mathcal L_B^{\mathrm{ev}}(\mathcal H_B):[x,T']\in\mathcal K_B(\mathcal H_B)\right\}
\]
as in the definition of $\ind'$. Since we have seen that $[\Psi_{jk}''(x),T]$ is compact, it follows that
\[
  \tilde P^E(x)=\left(\sqrt{\lambda_j(x)\lambda_k(x)}(V\Psi_{jk}''(x)V^*)\right)_{j,k}
\]
is contained in $M_n(Q)$ for all $x\in X$, where the $\lambda_j$ are the barycentric coordinates on $X$. In addition, $\tilde
P^E(x)$ is self-adjoint because $\Psi_{jk}''(x)^*=\Psi_{kj}''(x)$ for all $j,k$. Finally, a calculation shows that $\tilde
P^E(x)^2=\tilde P^E(x)$, so that $\tilde P^E(x)$ is a projection. Thus, $\tilde P^E$ defines a projection in $C(X;M_n(Q))\cong
M_n(C(X;Q))$, and hence a class $[\tilde P^E]\in K_0(C(X;Q))\cong K_0(C(X)\otimes Q)$. We have already seen that there is a
split short exact sequence
\[
  \begin{tikzpicture}
    \matrix (m) [matrix of math nodes, row sep=3em, column sep=3em, commutative diagrams/every cell]
    {
      0 & \mathcal K_B(H_B) & Q & \mathcal L_B(H_B) & 0, \\
    };
    \path[-stealth, commutative diagrams/.cd, every label] (m-1-1) edge (m-1-2) (m-1-2) edge node [above] {$i_{T'}$} (m-1-3)
      (m-1-3) edge node [above] {$\pi_{T'}$} (m-1-4) (m-1-4) edge [bend left=30] node [below] {$s_{T'}$} (m-1-3) edge (m-1-5);
  \end{tikzpicture}
\]
so the sequence
\begingroup\small
\[
  \begin{tikzpicture}
    \matrix (m) [matrix of math nodes, row sep=3em, column sep=3em, commutative diagrams/every cell]
    {
      0 & C(X)\otimes\mathcal K_B(H_B) & C(X)\otimes Q & C(X)\otimes\mathcal L_B(H_B) & 0, \\
    };
    \path[-stealth, commutative diagrams/.cd, every label] (m-1-1) edge (m-1-2) (m-1-2) edge node [above] {$\id\otimes i_{T'}$}
      (m-1-3) (m-1-3) edge node [above] {$\id\otimes\pi_{T'}$} (m-1-4) (m-1-4) edge [bend left=30] node [below] {$\id\otimes
      s_{T'}$} (m-1-3) edge (m-1-5);
  \end{tikzpicture}
\]
\endgroup
is split exact as well because $C(X)$ is nuclear. Thus, we get an associated split exact sequence in K-theory, and therefore a
group homomorphism $\rho_X\colon K_0(C(X)\otimes Q)\to K_0(C(X)\otimes\mathcal K_B(H_B))$ such that $(\id\otimes
i_{T'})_*\circ\rho_X=\id-(\id\otimes s_{T'}\pi_{T'})_*$. Of course, $C(X)\otimes\mathcal K_B(H_B)$ is naturally isomorphic to
$C(X;B)\otimes\mathcal K$, so in particular there is a natural isomorphism $K_0(C(X)\otimes\mathcal K_B(H_B))\cong
K_0(C(X;B)\otimes\mathcal K)\cong K_0(C(X;B))$. Now we obtain the following description of the index of the $\epsilon$-flat
Fredholm bundle $(E,F_E)$.

\begin{thm}\label{thm:alternative calculation of the index of an almost flat Fredholm bundle}
  Under the above isomorphisms, $\ind F_E=\rho_X[\tilde P^E]\in K_0(C(X;B))$.
\end{thm}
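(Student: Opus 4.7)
The core claim is that the triple $(\mathcal H_{C(X;B)}, \tilde P^E, T')$ represents the class $[\hat E]=[\Gamma(E),\id,F_*]\in KK(C(X;B))$ inside the $\mathcal Q\cap\mathcal H$-picture. Once this is established, the definition of $\ind'$ via the splitting $\rho_X$ together with Theorem~\ref{thm:ind=ind'} gives
\[
  \ind F_E = \ind[\hat E] = \ind'[\hat E] = \rho_X[\tilde P^E],
\]
which is exactly the claim.

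I would prove the identification by unwinding the construction of $\tilde P^E$ and $T'$ stepwise. First, conjugation by the even isometric isomorphism $V$ is a unitary equivalence of Kasparov modules and so (as already verified after Lemma~\ref{lem:homomorphism associated to a split exact sequence of abelian groups}) preserves the class; it replaces $T'$ by $T$ and $\tilde P^E$ by $(\sqrt{\lambda_j\lambda_k}\,\Psi_{jk}''(x))_{j,k}$. Next, conjugating by the even unitary $U(F')$ replaces this projection by $(\sqrt{\lambda_j\lambda_k}\,(\Psi_{jk}'(x)\oplus 0)\oplus 0)_{j,k}$ and $T$ by $U(F')^*TU(F')$. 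By Proposition~\ref{prop:properties of U(F)}, applied entrywise with $H$ taken to be $\Psi_{jk}'(x)$ (whose commutators with $F'$ are compact by Lemma~\ref{lem:[Psi',F'] is compact}), the operator $U(F')^*TU(F')$ agrees with $(F'\oplus(-F'))\oplus((-F')\oplus F')$ on the image of the projection, modulo compacts in $\mathcal K_B(\mathcal H_B')$. Lemma~\ref{lem:fiberwise compact operators} then upgrades the fiberwise compactness to compactness in $\mathcal K_{C(X;B)}$ on global sections.

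At this point the Kasparov module has the form
\[
  \bigl(\mathcal H_B'^{\oplus n},\ ((\sqrt{\lambda_j\lambda_k}\,\Psi_{jk}'\oplus 0)\oplus 0)_{j,k},\ (F'\oplus(-F'))\oplus((-F')\oplus F')\bigr).
\]
The last three of the four $\mathcal H_B$-summands are annihilated by the projection, so they contribute only a degenerate piece which can be discarded. Reversing the stabilization by $U$ (an even unitary) yields $(W^{\oplus n}, P^E, F\cdot\id_n)$ where $P^E=(\sqrt{\lambda_j\lambda_k}\,\Psi_{jk})_{j,k}$ is the standard bundle-reconstructing projection. The map $(\xi_j)_j\mapsto\sum_j\sqrt{\lambda_j}\,\Phi_j(\cdot,\xi_j)$ identifies the image of $P^E$ isometrically with $\Gamma(E)$; under this identification, $F\cdot\id_n$ and $F_*$ agree modulo $\mathcal K_{C(X;B)}$ because all $F_k(x)$ differ from $F$ by compacts, uniformly in $x$ by Lemma~\ref{lem:fiberwise compact operators}. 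A straight-line homotopy of operators then connects the two, showing the resulting class is $[\hat E]$.

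The main obstacle is the $U(F')$ step: the compact-perturbation identity of Proposition~\ref{prop:properties of U(F)} is stated fiberwise, but what is needed is compactness in $\mathcal K_{C(X;B)}$ on the level of sections. This passage from fiberwise to global compactness is precisely what Lemma~\ref{lem:fiberwise compact operators} provides, but one must verify that the operators produced by Proposition~\ref{prop:properties of U(F)} depend continuously on $x\in X$ and have values in $\mathcal K_B$ pointwise, so that the lemma applies. The remaining steps are unitary equivalences, addition of degenerate modules, and a homotopy of compact perturbations, all of which are standard operations preserving the $KK$-class.
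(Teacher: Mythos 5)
Your proof takes essentially the same route as the paper's: both reduce the claim to the assertion that the Kasparov module built from $\tilde P^E$ and $T_X'=(T'\oplus\cdots\oplus T')$ lies in the $\mathcal Q(C(X;B))\cap\mathcal H(C(X;B))$-picture and represents $[\hat E]$, after which $\ind=\ind'$ (Theorem~\ref{thm:ind=ind'}) and the definition of $\ind'$ via the splitting sequence give the result; the paper simply defers the identification $[\hat E]=[\hat E']$ (and the comparison of $\rho_{T_X'}$ with $\rho_X$) to the dissertation, and your unwinding via the unitary conjugations by $V$, $U(F')$, $U$, the compact-perturbation step using Proposition~\ref{prop:properties of U(F)} together with Lemma~\ref{lem:fiberwise compact operators}, the removal of degenerate summands, and the standard isometry $\mathrm{im}\,P^E\cong\Gamma(E)$ is a correct reconstruction of that deferred argument. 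The one point you gesture at but do not spell out --- that $\ind'[\hat E']$, computed with the splitting homomorphism for $Q_{T_X'}$ over $C(X;B)$, agrees with $\rho_X[\tilde P^E]$ coming from the tensored-up sequence $C(X)\otimes(\mathcal K_B\to Q\to\mathcal L_B)$ --- follows from Lemma~\ref{lem:homomorphism associated to a split exact sequence of abelian groups} applied to the natural map between these two split sequences, using that $C(X)\otimes\mathcal K_B(H_B)\to\mathcal K_{C(X;B)}(H_{C(X;B)})$ is an isomorphism for compact $X$.
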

\begin{proof}
  From the definition and from Theorem~\ref{thm:ind=ind'} we have that $\ind F_E=\ind'[\hat E]$. Thus, we will first
  replace $\hat E$ by a Kasparov $C(X;B)$-module in $\mathcal Q(C(X;B))\cap\mathcal H(C(X;B))$ in order to be able to calculate
  $\ind'[\hat E]$. In fact, consider
  \[
    T_X'=\id_X\times(T'\oplus\cdots\oplus T')\in\mathcal L_B\left(X\times(\mathcal H_B\oplus\cdots\oplus\mathcal H_B)\right)
  \]
  where $T'\in\mathcal L_B(\mathcal H_B)$ is as in the definition of the C*-algebra $Q$.

  One can now show that the triple
  \[
    \hat E'=\left(\Gamma(X\times(\mathcal H_B\oplus\cdots\oplus\mathcal H_B)),(\tilde P^E)_*,(T_X')_*\right)
  \]
  is a Kasparov $C(X;B)$-module in $\mathcal Q(C(X;B))\cap\mathcal H(C(X;B))$, and that $[\hat E]=[\hat E']\in KK(C(X;B))$.
  Thus, $\ind F_E=\ind'[\hat E']\in K_0(C(X;B))$.

  It is then straightforward to relate this result to $\rho_X[\tilde P^E]$. For details we refer to
  \cite[Theorem~4.5.7]{hunger-asymptotically-flat-fredholm-bundles}.
\end{proof}

\section{Asymptotic Fredholm representations}

\label{sec:asymptotic Fredholm representations}

Up to now, we have not used that $\epsilon$ in the definition of an $\epsilon$-flat Fredholm bundle is actually small. In
this section, we will consider $\epsilon$-flat Fredholm bundles where $\epsilon$ tends to zero in the sense of the following
definition.

\begin{dfn}
  An \emph{asymptotically flat Fredholm bundle} is a sequence of $\epsilon_n$-flat Fredholm bundles $(E_n,F_n)$ with
  the same underlying unital C*-algebra $B$, such that $\lim_{n\to\infty}\epsilon_n=0$.
\end{dfn}

If $E\to M$ is a flat smooth Hilbert $B$-module bundle with connection over a smooth manifold $M$, then parallel transport along
a curve $\gamma$ is invariant under homotopy of $\gamma$ relative to its endpoints. This construction yields a representation of
the fundamental group of $M$ on the fibers of $E$. It turns out that a similar construction is feasible for asymptotically flat
Fredholm bundles as well. This yields the concept of an almost representation associated to an almost flat bundle. This
connection between almost flat bundles and almost representations has first been observed by Connes, Gromov, and Moscovici
\cite{connes-gromov-moscovici-conjecture-de-novikov}, and was further analyzed by Manuilov and Mishchenko
\cite{manuilov-mishchenko-almost-asymptotic-fredholm-representations}, Mishchenko and Teleman
\cite{mishchenko-teleman-almost-flat-bundles}, Hanke \cite{hanke-positive-scalar-curvature}, Carrión and Dadarlat
\cite{carrion-dadarlat-almost-flat-k-theory}, and the author of this paper
\cite{hunger-almost-flat-bundles-homological-invariance}.

In order to describe the construction of the almost representation associated to an almost flat bundle, let $E\to X$ be an
$\epsilon$-flat Hilbert $B$-module bundle over a simplicial complex $X$, with typical fiber $W$. Consider vertices $v_0$ and
$v_1$ which bound an edge in $X$. In particular, $\frac 12(v_0+v_1)\in S_{v_0}\cap S_{v_1}$, and we may define the
\emph{transport operator} along the (oriented) edge $[v_0,v_1]$ to be the even unitary operator
$T_{(v_0,v_1)}=\Psi_{v_1,v_0}(\frac 12(v_0+v_1))\in\mathcal L_B(W)$.

We can also define transport along \emph{simplicial paths} in $X$. A simplicial path in $X$ is a finite sequence
$\Gamma=(v_0,v_1,\ldots,v_k)$ of vertices in $X$, such that $[v_i,v_{i+1}]$ is an edge for all $i$. One should imagine $\Gamma$
as the continuous path which is the concatenation of the linear paths joining $v_i$ and $v_{i+1}$. We define the transport
operator along such a simplicial path $\Gamma$ to be
\[
  T_\Gamma=T_{(v_{n-1},v_n)}\circ\cdots\circ T_{(v_0,v_1)}\in\mathcal L_B(W).
\]
Of course, each $T_{(v_0,v_1)}$ and hence also each $T_\Gamma$ is an isometric automorphism of $W$. Furthermore, $T_\Gamma$ is
graded if $E$ is a graded almost flat Hilbert $B$-module bundle.

If $\Gamma=(v_0,\ldots,v_k)$ and $\Gamma'=(v_k,\ldots,v_{k+\ell})$ are two simplicial paths such that the endpoint of $\Gamma$
equals the starting point of $\Gamma'$, then the concatenation of $\Gamma$ and $\Gamma'$ is the simplicial path
$\Gamma'*\Gamma=(v_0,\ldots,v_{k+\ell})$. It is clear from the definition that $T_{\Gamma'*\Gamma}=T_{\Gamma'}T_\Gamma$. The
\emph{opposite} of a simplicial path $\Gamma=(v_0,\ldots,v_n)$ is the simplicial path $\bar\Gamma=(v_n,\ldots,v_0)$, and clearly
$T_{\bar\Gamma}=T_\Gamma^*$.

The most important feature of the transport operators is that they satisfy an analogue of Proposition \ref{prop:parallel
transport along the boundary of a disc}.

\begin{thm}[{\cite[Theorem 3.4]{hunger-almost-flat-bundles-homological-invariance}}]
  \label{thm:transport along contractible loops}
  Let $\Gamma=(v_0,\ldots,v_n)$ be a contractible simplicial loop in $X$, in the sense that the concatenation of linear
  paths joining $v_i$ and $v_{i+1}$ form a loop in $X$ which is contractible. Then there are constants $C(\Gamma)>0$ and
  $\delta(\Gamma)>0$, depending only on $X$ and $\Gamma$, such that the following holds:

  Let $E\to X$ be an $\epsilon$-flat Hilbert $B$-module bundle where $\epsilon\leq\delta$. Then the transport operator
  $T_\Gamma\in\mathcal L_B(W)$ satisfies $\|T_\Gamma-\id\|\leq C\cdot\epsilon$.\qed
\end{thm}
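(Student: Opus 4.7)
The plan is to reduce the statement to the case of a single triangular loop bounding a 2-simplex of $X$, and then assemble the general case from a combinatorial null-homotopy of $\Gamma$.

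\textbf{Base case.} Let $(v_0, v_1, v_2)$ span a 2-simplex of $X$ and consider the triangular loop $\Delta = (v_0, v_1, v_2, v_0)$. The barycenter $b = \tfrac{1}{3}(v_0+v_1+v_2)$ lies in $S_{v_0} \cap S_{v_1} \cap S_{v_2}$, so the cocycle identity for transition functions gives
\[
  \Psi_{v_0, v_2}(b)\,\Psi_{v_2, v_1}(b)\,\Psi_{v_1, v_0}(b) = \id.
\]
By definition, $T_\Delta = \Psi_{v_0, v_2}(m_{20})\, \Psi_{v_2, v_1}(m_{12})\, \Psi_{v_1, v_0}(m_{01})$, where $m_{ij}$ is the midpoint of $[v_i, v_j]$. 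Since each $\Psi_{v', v}$ has image of diameter at most $\epsilon$ and takes values in the unitary group, telescoping the three factors to their common values at $b$ yields $\|T_\Delta - \id\| \leq 3\epsilon$.

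\textbf{Inductive step.} Since $\Gamma$ is null-homotopic in $X$, it represents the trivial element of $\pi_1(X)$. The standard combinatorial description of the fundamental group of a simplicial complex allows one to write
\[
  \Gamma \sim \alpha_1 * \Delta_1^{\pm 1} * \bar\alpha_1 * \cdots * \alpha_N * \Delta_N^{\pm 1} * \bar\alpha_N
\]
in the groupoid of simplicial paths modulo backtracking, where each $\Delta_i$ is a triangular loop around a 2-simplex and each $\alpha_i$ is a simplicial path in $X$. The integer $N = N(\Gamma)$ depends only on $X$ and $\Gamma$. Applying the transport operator and using $T_{(v,w,v)} = \id$ (so inserting or deleting back-and-forth pairs leaves the transport unchanged), the identity above becomes
\[
  T_\Gamma = \bigl(T_{\alpha_1}\, T_{\Delta_1}^{\pm 1}\, T_{\alpha_1}^*\bigr) \cdots \bigl(T_{\alpha_N}\, T_{\Delta_N}^{\pm 1}\, T_{\alpha_N}^*\bigr).
\]
Since each $T_{\alpha_i}$ is unitary, conjugation preserves the norm, so $\|T_{\alpha_i}\, T_{\Delta_i}^{\pm 1}\, T_{\alpha_i}^* - \id\| \leq 3\epsilon$ by the base case. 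Combined with the elementary bound $\|U_1 \cdots U_N - \id\| \leq \sum_i \|U_i - \id\|$ for unitaries $U_i$, this yields $\|T_\Gamma - \id\| \leq 3N\epsilon$. Setting $C(\Gamma) = 3 N(\Gamma)$ and, say, $\delta(\Gamma) = 1$, proves the claim.

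\textbf{Main obstacle.} The non-routine step is the combinatorial null-homotopy: extracting from the topological contractibility of $\Gamma$ an explicit decomposition as a bounded product of conjugates of triangular loops whose length $N(\Gamma)$ is independent of the bundle. One needs to pick the conjugating simplicial paths $\alpha_i$ explicitly, to perform the backtrack reductions in a compatible order, and to verify the decomposition at the level of the edge-path groupoid rather than merely up to continuous homotopy. This is ultimately a finite combinatorial procedure depending only on the simplicial structure of $X$ and on $\Gamma$, but the bookkeeping is where the work lies.
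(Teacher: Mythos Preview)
The paper does not give its own proof of this statement: it is quoted verbatim from \cite[Theorem~3.4]{hunger-almost-flat-bundles-homological-invariance} and closed with a \qed. So there is nothing to compare against directly; I can only assess your argument on its own terms.

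Your approach is correct and is essentially the standard one. The base case is clean: the cocycle identity holds at the barycenter, and replacing each unitary factor by its value at $b$ costs at most $\epsilon$ per factor because the diameter of the image of each $\Psi_{v',v}$ is bounded by $\epsilon$; telescoping three unitary factors gives $3\epsilon$. For the inductive step, the key input is precisely what you flag as the main obstacle: the edge-path groupoid of a simplicial complex is generated by oriented edges subject to backtracking relations and the triangle relations coming from 2-simplices, and this groupoid computes $\pi_1(X)$. Hence a null-homotopic simplicial loop is, after finitely many backtracking reductions, a product of conjugates of triangular relators. Since $T_{(v,w,v)}=\Psi_{v,w}(m)\Psi_{w,v}(m)=\id$, transport is well defined on the edge-path groupoid modulo backtracking, and your unitary estimate $\|U_1\cdots U_N-\id\|\le\sum_i\|U_i-\id\|$ finishes the job with $C(\Gamma)=3N(\Gamma)$.

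Two small remarks. First, your bound is in fact linear in $\epsilon$ with no smallness assumption, so the constant $\delta(\Gamma)$ is genuinely superfluous in this formulation; setting $\delta=1$ (or anything positive) is fine. The presence of $\delta$ in the cited statement is an artifact of the slightly different definition of $\epsilon$-flatness in \cite{hunger-almost-flat-bundles-homological-invariance}, where Lipschitz control on the transition functions is assumed rather than a diameter bound (cf.~Remark~\ref{rem:lipschitz-transition-functions-or-not}). Second, your ``main obstacle'' paragraph is accurate but could be sharpened: the relevant fact is a textbook theorem (the edge-path group of a simplicial complex is naturally isomorphic to its fundamental group), so the existence of the finite combinatorial null-homotopy, with $N(\Gamma)$ depending only on $X$ and $\Gamma$, is not something you need to reprove---citing it suffices.
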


Now suppose that $X$ is connected and that $\pi_1(X;v_0)=\langle L\mid R\rangle$ is finitely presented, where $v_0\in X$ is a
base vertex. This means that $L\subset\pi_1(X;v_0)$ is a finite subset which generates $\pi_1(X;v_0)$, and that $R\subset\Fr(L)$
is a finite subset of the free group generated by $L$ such that the normal subgroup $\langle R\rangle\subset\Fr(L)$ generated by
$R$ equals the kernel of the projection $\Fr(L)\to\pi_1(X;v_0)$.

It follows from the simplicial approximation theorem that every $g\in L$ can be represented by a simplicial loop
$\Gamma_g$ in $X$, based at $v_0$. We fix a choice of these simplicial loops. Now suppose that $E\to X$ is an $\epsilon$-flat
Hilbert $B$-module bundle over $X$, with typical fiber $W$. We consider the group homomorphism $\rho_E\colon\Fr(L)\to U(W)$
which is uniquely determined by $\rho_E(g)=T_{\Gamma_g}$ for all $g\in L\subset\Fr(L)$. Here $U(W)$ is the group of isometric
automorphisms of $W$.

\begin{prop}\label{prop:almost representation associated to an almost flat bundle}
  There exist constants $C,\delta>0$ which depend on $X$, the basepoint $v_0$, the finite presentation $\pi_1(X;v_0)=\langle
  L\mid R\rangle$, and the representing simplicial loops $\Gamma_g$, but not on the $\epsilon$-flat bundle $E\to X$, such that
  $\|\rho_E(r)-\id\|<C\epsilon$ for all $r\in R$ if $\epsilon\leq\delta$.
\end{prop}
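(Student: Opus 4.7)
The plan is to reduce the statement directly to Theorem~\ref{thm:transport along contractible loops} applied to a finite list of simplicial loops, one for each relation.

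First I extend the choice of simplicial representing loops from $L$ to arbitrary words in $\Fr(L)$. Writing a relation $r \in R$ as a reduced word $r = g_{i_1}^{\epsilon_1}\cdots g_{i_k}^{\epsilon_k}$ with $g_{i_j} \in L$ and $\epsilon_j \in \{\pm 1\}$, I define a simplicial loop $\Gamma_r$ based at $v_0$ by concatenating (in the order dictated by the convention $T_{\Gamma' * \Gamma} = T_{\Gamma'} T_\Gamma$) the loops $\Gamma_{g_{i_j}}$ when $\epsilon_j = +1$, and $\bar\Gamma_{g_{i_j}}$ when $\epsilon_j = -1$. Using the two identities $T_{\Gamma'*\Gamma}=T_{\Gamma'}T_\Gamma$ and $T_{\bar\Gamma}=T_\Gamma^*$ recalled above the proposition, together with the fact that $\rho_E$ is a group homomorphism with $\rho_E(g) = T_{\Gamma_g}$ on generators, I obtain the key identity
\[
  \rho_E(r) = T_{\Gamma_r}\qquad\text{for every } r\in R.
\]

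Next I check that $\Gamma_r$ is a contractible simplicial loop in the sense of Theorem~\ref{thm:transport along contractible loops}. The continuous loop underlying $\Gamma_r$ (the concatenation of the linear paths joining consecutive vertices) represents precisely the image of $r$ under the projection $\Fr(L) \to \pi_1(X;v_0)$, because each $\Gamma_g$ was chosen to represent $g$ and because path concatenation and reversal realize the group operations in $\pi_1(X;v_0)$. Since $r \in R \subseteq \ker(\Fr(L) \to \pi_1(X;v_0))$, this image is trivial, so $\Gamma_r$ is null-homotopic.

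Applying Theorem~\ref{thm:transport along contractible loops} to each loop $\Gamma_r$ yields constants $C(\Gamma_r), \delta(\Gamma_r) > 0$, depending only on $X$ and $\Gamma_r$, such that for every $\epsilon$-flat Hilbert $B$-module bundle $E$ with $\epsilon \le \delta(\Gamma_r)$ one has
\[
  \|\rho_E(r) - \id\| = \|T_{\Gamma_r} - \id\| \le C(\Gamma_r)\cdot\epsilon.
\]
Since $R$ is finite, setting $C = \max_{r \in R} C(\Gamma_r)$ and $\delta = \min_{r \in R} \delta(\Gamma_r)$ yields constants depending only on $X$, $v_0$, the finite presentation $\langle L \mid R\rangle$, and the loops $\Gamma_g$, as required.

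The only real work is the bookkeeping in the first step (ordering of concatenation and treatment of inverses), but this is purely formal given the multiplicative and involutive behavior of $T_{(-)}$. There is no genuine obstacle: the substantive geometric content has been absorbed into Theorem~\ref{thm:transport along contractible loops}, and the present proposition is essentially a corollary of that theorem together with the finiteness of the presentation.
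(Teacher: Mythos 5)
Your argument is correct and is essentially the same as the paper's: write each relation $r\in R$ as a word in $L\cup L^{-1}$, form the corresponding concatenated simplicial loop $\Gamma(r)$, observe that it is contractible because $r$ lies in the kernel of $\Fr(L)\to\pi_1(X;v_0)$, apply Theorem~\ref{thm:transport along contractible loops}, and take $\max$/$\min$ of the resulting constants over the finite set $R$. The bookkeeping about concatenation order and inverses that you flag is handled the same way in the paper.
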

\begin{proof}
  Write $r\in R$ as a product $r=g_1\cdots g_n$ of elements $g_k\in L\cup L^{-1}$. Put $\Gamma_k=\Gamma_{g_k}$ if $g_k\in L$,
  and $\Gamma_k=\bar\Gamma_{g_k^{-1}}$ otherwise. Then $\rho_E(r)=T_{\Gamma_1*\cdots*\Gamma_n}$. Since $r$ is contained in the
  kernel of the map $\Fr(L)\to\pi_1(X;v_0)$, the simplicial loop $\Gamma(r)=\Gamma_1*\cdots*\Gamma_n$ is contractible.
  Therefore, Theorem~\ref{thm:transport along contractible loops} implies that $\|T_{\Gamma(r)}-\id\|\leq C_r\cdot\epsilon$ if
  $\epsilon\leq\delta_r$, for some constants $C_r,\delta_r>0$. The claim follows with $\delta=\min_{r\in R}\delta_r$ and
  $C=\max_{r\in R}C_r$.
\end{proof}

This property of $\rho_E$ can be formalized as follows:

\begin{dfn}[{\cite{connes-gromov-moscovici-conjecture-de-novikov}}]
  An \emph{$\epsilon$-almost representation} of a finitely presented group $G=\langle L\mid R\rangle$ on a Hilbert $B$-module
  $W$ is a group homomorphism $\rho\colon\Fr(L)\to U(W)$ such that $\|\rho(r)-\id\|\leq\epsilon$ for all $r\in R$.
\end{dfn}

Therefore, Proposition~\ref{prop:almost representation associated to an almost flat bundle} can be reformulated by saying that
$\rho_E\colon\Fr(L)\to U(W)$ is a $C\epsilon$-representation if $E\to X$ is an $\epsilon$-flat bundle with $\epsilon\leq\delta$.
We will mainly be interested in almost flat bundles and almost representations in the limit $\epsilon\to 0$, which can be
formalized as follows.

\begin{dfn}
  An \emph{asymptotic representation} \cite{manuilov-mishchenko-almost-asymptotic-fredholm-representations}
  of a finitely presented group $G=\langle L\mid R\rangle$ over the C*-algebra $B$ is a sequence $(W_n,\rho_n)_{n\in\N}$ of
  $\epsilon_n$-representations $\rho_n\colon\Fr(L)\to U(W_n)$ with $\epsilon_n\to 0$, where the $W_n$ are all Hilbert $B$-modules.

  Similarly, an \emph{asymptotically flat Hilbert $B$-module bundle} over a simplicial complex $X$ is a sequence
  $(E_n)_{n\in\N}$ of $\epsilon_n$-flat Hilbert $B$-module bundles $E_n\to X$, such that $\epsilon_n\to 0$.
\end{dfn}

Now Proposition~\ref{prop:almost representation associated to an almost flat bundle} implies that the almost representations
associated to an asymptotically flat Hilbert $B$-module bundle $(E_n)_{n\in\N}$ form an asymptotic representation
$(W_n,\rho_{E_n})_{n\in\N}$ of the fundamental group of $X$. Of course, this asymptotic representation depends on the choice of
generating set $L$ and on the representing curves $\Gamma_g$. However, these choices lead to equivalent asymptotic
representations in a sense that we will describe next.

\begin{lem}\label{lem:equivalence of asymptotic representations}
  Let $G$ be a group with two finite presentations $G=\langle L_1\mid R_1\rangle$ and $G=\langle L_2\mid R_2\rangle$. For
  $k=1,2$ we denote by $\pi_k\colon\Fr(L_k)\to G$ the canonical projections. For $k=1,2$ and $n\in\N$ let
  $\rho_{k,n}\colon\Fr(L_k)\to U(W_n)$ be almost representations such that $(W_n,\rho_{k,n})_{n\in\N}$ are asymptotic
  representations for $k=1,2$. Then the following are equivalent:
  \begin{enumerate}
    \item There exist set-theoretic sections $s_k\colon G\to\Fr(L_k)$ of the projections $\pi_k$ such that
      \begin{equation}
        \lim_{n\to\infty}\|\rho_{1,n}(s_1(g))-\rho_{2,n}(s_2(g))\|=0
        \label{eq:equivalence of asymptotic representations}
      \end{equation}
      for all $g\in G$.
    \item Equation \eqref{eq:equivalence of asymptotic representations} holds for all pairs of set-theoretic sections $s_k\colon
      G\to\Fr(L_k)$ of $\pi_k$.
  \end{enumerate}
\end{lem}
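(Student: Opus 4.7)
The implication (2) $\Rightarrow$ (1) is trivial, so the work is in showing (1) $\Rightarrow$ (2). The key observation is that for a \emph{single} presentation $G=\langle L\mid R\rangle$, changing the choice of set-theoretic section $s\colon G\to\Fr(L)$ affects $\rho_n(s(g))$ only by an asymptotically negligible factor. Granting this, both directions are an application of the triangle inequality.

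Concretely, fix $k\in\{1,2\}$ and two sections $s_k,s_k'\colon G\to\Fr(L_k)$. For each $g\in G$, the element $w_k(g)=s_k(g)^{-1}s_k'(g)$ lies in the kernel of $\pi_k$, hence by the definition of a presentation it is a finite product
\[
  w_k(g)=\prod_{i=1}^{N(g)}u_{i}\,r_i^{\pm 1}\,u_i^{-1}
\]
of conjugates of elements $r_i\in R_k$ by elements $u_i\in\Fr(L_k)$ (the number $N(g)$ and the words $u_i$, $r_i$ depending only on $g$, not on $n$). Since each $\rho_{k,n}$ is a homomorphism into the unitary group, we have
\[
  \|\rho_{k,n}(u_i r_i^{\pm 1}u_i^{-1})-\id\|=\|\rho_{k,n}(r_i^{\pm 1})-\id\|,
\]
which tends to $0$ as $n\to\infty$ because $(W_n,\rho_{k,n})_{n\in\N}$ is an asymptotic representation. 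A standard telescoping estimate in the unitary group then yields
\[
  \|\rho_{k,n}(w_k(g))-\id\|\leq\sum_{i=1}^{N(g)}\|\rho_{k,n}(r_i^{\pm 1})-\id\|\xrightarrow{n\to\infty}0,
\]
and since $\rho_{k,n}(s_k(g))$ is unitary we conclude $\|\rho_{k,n}(s_k'(g))-\rho_{k,n}(s_k(g))\|\to 0$.

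Given this, (1) $\Rightarrow$ (2) is immediate: for any other pair of sections $s_1',s_2'$, a three-term triangle inequality
\[
  \|\rho_{1,n}(s_1'(g))-\rho_{2,n}(s_2'(g))\|\leq\sum_{k=1,2}\|\rho_{k,n}(s_k'(g))-\rho_{k,n}(s_k(g))\|+\|\rho_{1,n}(s_1(g))-\rho_{2,n}(s_2(g))\|
\]
shows that each summand tends to $0$.

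The only step that requires any thought is the telescoping estimate and the observation that conjugation by a unitary preserves operator norm; everything else is bookkeeping. There is no analytic obstacle, because we work one $g\in G$ at a time and $N(g)$ is finite (no uniformity in $g$ is required by the statement). The main conceptual point is simply that almost-representations respect the defining relations \emph{up to conjugation} for free, since conjugation by unitaries is an isometry of $\mathcal L_B(W_n)$.
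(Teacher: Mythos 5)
Your proof is correct and follows essentially the same route as the paper's: both reduce everything to the observation that $\|\rho_{k,n}(r)-\id\|\to 0$ for every $r\in\ker\pi_k$, proved by writing $r$ as a finite product of conjugates of elements of $R_k\cup R_k^{-1}$ and using unitarity (conjugation-isometry plus the telescoping estimate). The paper only sketches this; you have simply filled in the bookkeeping.
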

\begin{proof}
  The claim follows straightforwardly from the observation that
  \[
    \lim_{n\to\infty}\|\rho_{k,n}(r)-\id_{W_n}\|=0
  \]
  for all $r\in\ker\pi_k$, which in turn follows from the fact that $r$ can be written as a product of conjugates of elements in
  $R\cup R^{-1}$.
\end{proof}

If two asymptotic representations $(\rho_{1,n})_{n\in\N}$ and $(\rho_{2,n})_{n\in\N}$ satisfy the two equivalent conditions of
Lemma~\ref{lem:equivalence of asymptotic representations} then they are called \emph{asymptotically equivalent}. It is clear
from the definition that asymptotic equivalence is an equivalence relation.

\begin{prop}\label{prop:associated asymptotic representations for different presentations}
  Let $X$ be a connected simplicial complex, let $v_0\in X$ be a base vertex, and suppose that $G=\pi_1(X;v_0)$ has two finite
  presentations $G=\langle L\mid R\rangle$ and $G'=\langle L'\mid R'\rangle$. For each $g\in L$ let $\Gamma_g$ be a simplicial
  loop representing $g$, and for each $g'\in L'$ let $\Gamma_{g'}'$ be a simplicial loop representing $g'$.

  Consider an asymptotically flat Hilbert $B$-module bundle $(E_n)_{n\in\N}$, and let $\rho_{E_n}\colon\Fr(L)\to U(W_n)$ and
  $\rho_{E_n}'\colon\Fr(L')\to U(W_n)$ be the associated almost representations. Then the asymptotic representations
  $(\rho_{E_n})_{n\in\N}$ and $(\rho_{E_n}')_{n\in\N}$ are asymptotically equivalent.
\end{prop}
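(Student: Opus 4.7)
The plan is to verify condition (1) of Lemma~\ref{lem:equivalence of asymptotic representations}, by choosing canonical set-theoretic sections $s\colon G\to\Fr(L)$ and $s'\colon G\to\Fr(L')$ and comparing $\rho_{E_n}(s(g))$ with $\rho_{E_n}'(s'(g))$ for each fixed $g\in G$.

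First I would pick, for every $g\in G$, a word $s(g)=g_{i_1}^{\varepsilon_1}\cdots g_{i_k}^{\varepsilon_k}\in\Fr(L)$ with $\pi(s(g))=g$, and similarly a word $s'(g)\in\Fr(L')$ with $\pi'(s'(g))=g$. By the construction of $\rho_{E_n}$ from parallel transport, the operator $\rho_{E_n}(s(g))$ equals the transport operator $T_{\Gamma(s(g))}$ along the concatenated simplicial loop
\[
  \Gamma(s(g))=\Gamma_{g_{i_k}}^{\varepsilon_k}*\cdots*\Gamma_{g_{i_1}}^{\varepsilon_1},
\]
where $\Gamma_{h}^{-1}$ denotes the opposite simplicial path $\bar\Gamma_{h}$; analogously $\rho_{E_n}'(s'(g))=T_{\Gamma'(s'(g))}$ for a corresponding concatenation $\Gamma'(s'(g))$ of the $\Gamma'_{h'}$ and their opposites.

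Since $\Gamma(s(g))$ and $\Gamma'(s'(g))$ both represent $g\in\pi_1(X;v_0)$, the simplicial loop
\[
  \Lambda_g=\overline{\Gamma'(s'(g))}*\Gamma(s(g))
\]
is a contractible simplicial loop in $X$ that depends only on $g$ and on our fixed data (generators, relators, representing loops, and sections), but not on $n$. Using the multiplicativity of the transport operator under concatenation and the identity $T_{\bar\Gamma}=T_\Gamma^*$, we compute
\[
  T_{\Lambda_g}=T_{\overline{\Gamma'(s'(g))}}\,T_{\Gamma(s(g))}=\rho_{E_n}'(s'(g))^{*}\,\rho_{E_n}(s(g)).
\]
By Theorem~\ref{thm:transport along contractible loops} applied to the fixed contractible loop $\Lambda_g$, there exist constants $C(\Lambda_g),\delta(\Lambda_g)>0$ depending only on $X$ and $\Lambda_g$ such that $\|T_{\Lambda_g}-\id\|\leq C(\Lambda_g)\cdot\epsilon_n$ as soon as $\epsilon_n\leq\delta(\Lambda_g)$. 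Since $\epsilon_n\to 0$, this holds for all sufficiently large $n$, and multiplying by the isometry $\rho_{E_n}'(s'(g))$ yields
\[
  \bigl\|\rho_{E_n}(s(g))-\rho_{E_n}'(s'(g))\bigr\|=\bigl\|\rho_{E_n}'(s'(g))^{*}\rho_{E_n}(s(g))-\id\bigr\|\xrightarrow{n\to\infty}0,
\]
which is exactly condition (1) of Lemma~\ref{lem:equivalence of asymptotic representations}.

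The only mild obstacle is bookkeeping: one must verify that concatenation of simplicial paths matches multiplication in $\Fr(L)$ under $\rho_{E_n}$ in the correct order, and that the rule $T_{\bar\Gamma}=T_\Gamma^*$ interacts properly with inversion of letters in $\Fr(L)$. Once these routine compatibilities are checked, the key analytic content is entirely contained in Theorem~\ref{thm:transport along contractible loops}, and there is no dependence of the constants on the bundle, which is what allows the limit $n\to\infty$ to be taken uniformly for each fixed $g$.
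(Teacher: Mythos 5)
Your proof is correct and follows essentially the same strategy as the paper: both $\rho_{E_n}(s(g))$ and $\rho_{E_n}'(s'(g))$ are realized as transport along fixed (independent of $n$) simplicial loops representing $g$, so that their difference reduces to transport along the contractible loop $\Lambda_g$, which is then controlled uniformly by Theorem~\ref{thm:transport along contractible loops}. The only bookkeeping correction, which you anticipated, is that with the paper's convention $T_{\Gamma'*\Gamma}=T_{\Gamma'}T_\Gamma$ one should write $\Gamma(s(g))=\Gamma_{g_{i_1}}^{\varepsilon_1}*\cdots*\Gamma_{g_{i_k}}^{\varepsilon_k}$ rather than in reversed order; as the same adjustment applies to $\Gamma'(s'(g))$, the contractibility of $\Lambda_g$ and the conclusion are unaffected.
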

\begin{proof}
  Choose sections $s\colon G\to\Fr(L)$ and $s'\colon G\to\Fr(L')$, and consider $g\in G$. Then there exist simplicial loops
  $\Gamma$ and $\Gamma'$ in $X$, both representing $g$, such that $\rho_{E_n}(s(g))=T_\Gamma^n$ and
  $\rho_{E_n'}(s'(g))=T_{\Gamma'}^n$ for all $n\in\N$, where $T_\Gamma^n$ and $T_{\Gamma'}^n$ denote the transport operators
  along $\Gamma$ and $\Gamma'$ in $E_n$, respectively. In particular, $\bar\Gamma'*\Gamma$ is a contractible simplicial loop in
  $X$. Thus,
  \[
    \lim_{n\to\infty}\|\rho_{E_n}(s(g))-\rho_{E_n'}(s'(g))\|=\lim_{n\to\infty}\|T_{\bar\Gamma'*\Gamma}^n-\id\|=0
  \]
  by Theorem~\ref{thm:transport along contractible loops}.
\end{proof}

Now let us introduce the datum of a Fredholm operator into the picture of asymptotic representations.

\begin{dfn}
  Let $B$ be a unital C*-algebra. An \emph{$\epsilon$-Fredholm representation} of a finitely presented group $G=\langle L\mid
  R\rangle$ is an $\epsilon$-representation $\rho\colon\Fr(L)\to U(W)$ by even operators on a graded Hilbert $B$-module $W$,
  together with an odd operator $F\in\mathcal L_B(W)$ such that $F^2-\id$, $F^*-F$, $[\rho(g),F]$, and $[\rho(g),F^*]$ are
  compact operators for all $g\in L$.

  An \emph{asymptotic Fredholm representation} of $G=\langle L\mid R\rangle$ is a sequence of $\epsilon_n$-Fredholm
  representations $(W_n,\rho_n,F_n)_{n\in\N}$, all of which have the same underlying unital C*-algebra $B$, such that
  $\epsilon_n\to 0$.

  Similarly, an \emph{asymptotically flat Fredholm bundle} is a sequence $(E_n,F_n)_{n\in\N}$ of $\epsilon_n$-flat Fredholm
  bundles such that $\epsilon_n\to 0$.
\end{dfn}

Not surprisingly, we can associate to an almost flat Fredholm bundle over a finite connected simplicial complex $X$ an almost
Fredholm representation of the fundamental group of $X$. Indeed, if $(E,F_E)$ is an $\epsilon$-flat Fredholm bundle, then we put
$\hat F_E=F_{v_0}(v_0)\in\mathcal L_B(W)$, which is of course just the operator $F$ in the definition of the projection $\tilde
P^E$ which appeared in Theorem~\ref{thm:alternative calculation of the index of an almost flat Fredholm bundle}.

\begin{lem}
  If $\Gamma$ is an arbitrary simplicial path in $X$, then $[T_\Gamma,\hat F_E]\in\mathcal K_B(W)$.
\end{lem}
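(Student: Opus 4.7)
The plan is to first handle the case of a single edge and then extend to arbitrary simplicial paths by induction, exploiting that $\mathcal{K}_B(W)$ is a two-sided ideal in $\mathcal{L}_B(W)$.

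For a single edge $(v, v')$, I would first extract from the axioms of an $\epsilon$-flat Fredholm bundle the intertwining relation at the midpoint $x = \frac{1}{2}(v + v')$. Computing $F_E(\Phi_v(x, \xi))$ in both the trivialization over $S_v$ and the trivialization over $S_{v'}$ (the latter using $\Phi_v(x, \xi) = \Phi_{v'}(x, \Psi_{v', v}(x) \xi)$) yields
\[
    \Psi_{v', v}(x) F_v(x) = F_{v'}(x) \Psi_{v', v}(x),
\]
i.e.\ $T_{(v, v')} F_v(x) = F_{v'}(x) T_{(v, v')}$. Then, using the standing assumption that $F_a(y) - F_b(z) \in \mathcal{K}_B(W)$ for all valid pairs, one has $F_v(x) - \hat F_E \in \mathcal{K}_B(W)$ and $F_{v'}(x) - \hat F_E \in \mathcal{K}_B(W)$. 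Substituting into the intertwining relation and using that $T_{(v, v')}$ is bounded shows $[T_{(v, v')}, \hat F_E] \in \mathcal{K}_B(W)$.

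For a general simplicial path $\Gamma = (v_0, \ldots, v_n)$, I would write $T_\Gamma = T_{(v_{n-1}, v_n)} \circ T_{\Gamma'}$ where $\Gamma' = (v_0, \ldots, v_{n-1})$ and use the Leibniz-type identity
\[
    [T_2 T_1, \hat F_E] = T_2 \, [T_1, \hat F_E] + [T_2, \hat F_E] \, T_1.
\]
By induction on the length of $\Gamma$, both summands on the right-hand side lie in $\mathcal{K}_B(W)$ since this is a two-sided ideal and the $T$'s are bounded; the base case $n = 1$ was handled above.

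The only mildly delicate step is the very first one: reading off the intertwining relation $T_{(v, v')} F_v(x) = F_{v'}(x) T_{(v, v')}$ from the definition. Once this is in hand, the rest is a routine commutator manipulation with compact error terms, so I do not expect any substantial obstacle.
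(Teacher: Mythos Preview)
Your proposal is correct and follows essentially the same approach as the paper. The paper's proof is a compressed version of yours: it reduces to the single-edge case (implicitly using the Leibniz identity and the ideal property you spell out) and then appeals to the compactness of $\hat F_E - F_v(\tfrac12(v+v'))$ and $\hat F_E - F_{v'}(\tfrac12(v+v'))$; the intertwining relation $T_{(v,v')}F_v(x)=F_{v'}(x)T_{(v,v')}$ that you derive is precisely the missing algebraic step the paper leaves to the reader.
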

\begin{proof}
  It suffices to prove this in the case where $\Gamma=(v,v')$. Now the statement follows immediately from the fact that $\hat
  F-F_v(\frac 12(v+v'))$ and $\hat F-F_{v'}(\frac 12(v+v'))$ are compact by definition of an almost flat Fredholm bundle.
\end{proof}

Thus, $(W,\rho_E,\hat F_E)$ is a $C\epsilon$-Fredholm representation of $\pi_1(X;v_0)$ if $\epsilon$ is sufficiently small. In
particular, $(W_n,\rho_n,\hat F_n)_{n\in\N}$ is an asymptotic Fredholm representation of $\pi_1(X;v_0)$ if $(E_n,F_n)_{n\in\N}$
is an asymptotically flat Fredholm bundle, where $W_n$ denotes the fiber of $E_n$ and where $\rho_n$ is the almost representation
associated to $E_n$.

\section{D-theory and E-theory}

Later we will associate an asymptotic index to an asymptotically flat Fredholm representation. This asymptotic index takes its
value in the \emph{Thomsen D-theory group} $D(\Sigma C^*G,B)$ \cite{thomsen-discrete-asymptotic-homomorphisms}.
We will recall the definition and a few basic properties of D-theory groups and the related E-theory groups of Connes and Higson
in this section.

Let $B$ be a C*-algebra, which we first assume to be separable. The \emph{discrete asymptotic algebra} over $B$ is the
C*-algebra
\[
  \mathcal A_\delta B=\frac{C_b(\N,B)}{C_0(\N,B)}
\]
where $C_b(\N,B)$ is the C*-algebra of bounded sequences in $B$, and $C_0(\N,B)$ is the ideal of those sequences which tend to
zero at infinity. Obviously, $\mathcal A_\delta$ is an endofunctor on the category of C*-algebras. A \emph{discrete asymptotic
homomorphism} from $A$ to $B$ is a *-ho\-mo\-mor\-phism $A\to\mathcal A_\delta B$.  Two such discrete asymptotic homomorphisms
$f,g\colon A\to\mathcal A_\delta B$ are called \emph{asymptotically homotopic} if there is an asymptotic homomorphism $H\colon
A\to\mathcal A_\delta IB$ with $f=\mathcal A_\delta\ev_0\circ H$ and $g=\mathcal A_\delta\ev_1\circ H$. Here $IB=C([0,1],B)$ is
the C*-algebra of continuous $B$-valued functions on the unit interval, and $\ev_\tau\colon IB\to B$ is the map given by
evaluation at $\tau\in[0,1]$.

We denote by $\bbrack{A,B}_\delta$ the set of asymptotic homotopy classes of discrete asymptotic homomorphisms, and put
\[
  D(A,B)=\bbrack{\Sigma A\otimes\mathcal K,\Sigma^2B\otimes\mathcal K}_\delta,
\]
where $\Sigma A=C_0(\R;A)\cong C_0(\R)\otimes A$ is the \emph{suspension} of $A$ and $\mathcal K$ is the C*-algebra of compact
operators on a separable Hilbert space.

Similarly, we recall the definition Connes's and Higson's E-theory groups
\cite{connes-higson-deformations-morphismes-asymptotiques} as described in \cite{guentner-higson-trout-equivariant-e-theory}:
The \emph{asymptotic algebra} over $B$ is the C*-algebra
\[
  \mathcal AB=\frac{C_b(P,B)}{C_0(P,B)},
\]
where $C_b(P,B)$ is the C*-algebra of bounded continuous $B$-valued functions on $P=[0,\infty)\subset\R$, and $C_0(P,B)$ is the
ideal of those continuous functions which vanish at infinity. As before, an \emph{asymptotic homomorphism} from $A$ to $B$ is a
*-ho\-mo\-mor\-phism $A\to\mathcal AB$, and also asymptotic homotopy is defined as before. We denote by $\bbrack{A,B}$ the set
of asymptotic homotopy classes of asymptotic homomorphisms from $A$ to $B$, and put
\[
  E(A,B)=\bbrack{\Sigma A\otimes\mathcal K,\Sigma B\otimes\mathcal K}.
\]
Note that there is a double suspension in front of $B$ in the definition of $D(A,B)$ but only a single suspension in the
definition of $E(A,B)$. The advantage of this definition is that one now gets associative products $E(A,B)\times E(B,C)\to
E(A,C)$, $E(A,B)\times D(B,C)\to D(A,C)$, $D(A,B)\times E(B,C)\to D(A,C)$, and $D(A,B)\times D(B,C)\to D(A,C)$
\cite{thomsen-discrete-asymptotic-homomorphisms}, at least if all of the appearing C*-algebras are separable. All the
composition products are written as $(f,g)\mapsto g\bullet f$: For example, if $f\in D(A,B)$ and $g\in E(B,C)$ then we write
$g\bullet f\in D(A,C)$ for their product. In addition, there are natural maps $D(A,B)\to E(A,B)$ which are compatible with the
products.  Both the E-theory groups and the D-theory groups carry natural abelian group structures such that the various
products are bilinear.

It should be noted that the construction of the products above require that the occurring C*-algebras are separable. However,
the definition of the E-theory groups and the D-theory groups and all the products can be extended to non-separable
C*-algebras by considering $\bbbrack{A,B}=\lim_{A'}\bbrack{A'B}$ where the limit is taken over the partially ordered set of all
separable C*-subalgebras $A'\subset A$, and by putting $E(A,B)=\bbbrack{\Sigma A\otimes\mathcal K,\Sigma B\otimes\mathcal K}$ in
this case. Of course, the two definitions agree for separable C*-algebras, and most of the properties (in particular, all
properties that we will need) of the E-theory groups continue to hold in this setting. Analogously, we define
$\bbbrack{A,B}_\delta=\lim_{A'}\bbrack{A',B}_\delta$ and define $D(A,B)$ in terms of $\bbbrack{-,-}_\delta$ for non-separable
C*-algebras. For details about this construction we refer to the author's doctoral dissertation
\cite[Section~3.7]{hunger-asymptotically-flat-fredholm-bundles}.

For a C*-algebra $B$ there is a natural map $\kappa_B\colon B\to\mathcal AB$ given by $\kappa_B(b)=[t\mapsto b]$. In
particular, if $f\colon A\to B$ is an arbitrary *-ho\-mo\-mor\-phism then there is an associated element $\kappa(f)\in E(A,B)$
given by
\[
  \kappa(f)=\left[\kappa_{\Sigma B\otimes\mathcal K}\circ(\Sigma f\otimes\id_{\mathcal K})\right].
\]
If in addition $g\colon\Sigma B\otimes\mathcal K\to\mathcal A(\Sigma C\otimes\mathcal K)$ is an asymptotic homomorphism, then
also $g\circ(\Sigma f\otimes\id_{\mathcal K})\colon\Sigma A\otimes\mathcal K\to\mathcal A(\Sigma C\otimes\mathcal K)$ is an
asymptotic homomorphism which represents an element in $E(A,C)$, and one can easily show, using the definition of the E-theory
product, that
\begin{equation}
  g\bullet\kappa(f)=\left[g\circ(\Sigma f\otimes\id_{\mathcal K})\right]
  \label{eq:postcomposition with an asymptotic homomorphism}
\end{equation}
in $E(A,C)$. Similarly, \eqref{eq:postcomposition with an asymptotic homomorphism} holds in $D(A,C)$ if $g\colon\Sigma
B\otimes\mathcal K\to\mathcal A_\delta(\Sigma^2 C\otimes\mathcal K)$ is a discrete asymptotic homomorphism.

Essentially by definition, D-theory and E-theory are \emph{stable} in the following sense: Let $P\in\mathcal K$ be any
rank-one projection, and consider the map $f\colon B\to B\otimes\mathcal K$ given by $f(b)=b\otimes P$. Then $f$ induces
isomorphisms $E(A,B)\to E(A,B\otimes\mathcal K)$, $E(B\otimes\mathcal K,C)\to E(B,C)$ and $D(A,B)\to D(A,B\otimes\mathcal K)$,
$D(B\otimes\mathcal K,C)\to D(B,C)$.

Both E-theory and D-theory allow the following tensor product construction: Let $A,B,C$ be arbitrary C*-algebras. Then there is
a natural group homomorphism $-\otimes\id_C\colon E(A,B)\to E(A\otimes C,B\otimes C)$ where $\otimes$ denotes the
\emph{maximal} tensor product. This group homomorphism is defined as follows: Represent an element of the domain $E(A,B)$ by an
asymptotic homomorphism $f\colon\Sigma A\otimes\mathcal K\to\mathcal A(\Sigma B\otimes\mathcal K)$. Then
$[f]\otimes\id_C\in E(A\otimes C,B\otimes C)$ is the class which is represented by the asymptotic homomorphism
\[
  f\otimes\id_C\colon\Sigma A\otimes C\otimes\mathcal K\cong\Sigma A\otimes\mathcal K\otimes C\to\mathcal A(\Sigma
  B\otimes\mathcal K)\otimes C\to\mathcal A(\Sigma B\otimes C\otimes\mathcal K).
\]
In an analogous fashion one can also define $-\otimes\id_C\colon D(A,B)\to D(A\otimes C,B\otimes C)$. Of particular importance
are the \emph{suspension maps} $\Sigma\colon E(A,B)\to E(\Sigma A,\Sigma B)$ and $\Sigma\colon D(A,B)\to D(\Sigma A,\Sigma B)$,
which are isomorphisms by \cite[Proposition~6.17]{guentner-higson-trout-equivariant-e-theory} and
\cite[Theorem~4.2]{thomsen-discrete-asymptotic-homomorphisms}, respectively.

There is another important fact about E-theory which we will need in the following section.

\begin{prop}[{\cite[Proposition 5.1]{connes-higson-asymptotic-homomorphisms}}]
  \label{prop:E-theory element associated to a split short exact sequence}
  Let
  \[
    \begin{tikzpicture}
      \matrix (m) [matrix of math nodes, row sep=3em, column sep=3em, commutative diagrams/every cell]
      {
        0 & J & A & B & 0 \\
      };
      \path[-stealth, commutative diagrams/.cd, every label] (m-1-1) edge (m-1-2) (m-1-2) edge node [above] {$\iota$} (m-1-3)
        (m-1-3) edge node [above] {$\pi$} (m-1-4) (m-1-4) edge (m-1-5);
    \end{tikzpicture}
  \]
  be a short exact sequence of C*-algebras, and let $s\colon B\to A$ be a splitting, i.\,e.\ a *-ho\-mo\-mor\-phism such that
  $\pi\circ s=\id_B$. Then there exists an element $\sigma\in E(A,J)$ such that $\sigma\bullet\kappa(\iota)=\kappa(\id_J)$ and
  $\kappa(\iota)\bullet\sigma+\kappa(s\circ\pi)=\kappa(\id_A)$.\qed
\end{prop}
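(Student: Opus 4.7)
The plan is to represent $\sigma$ by an explicit discrete asymptotic homomorphism $\sigma_t\colon A\to J$, built from a quasicentral approximate unit and the splitting $s$, and then translate the two assertions into pointwise limits as $t\to\infty$. By Arveson's theorem we choose a positive sequence $(u_t)_{t\in\N}$ in $J$ with $\|u_t\|\le 1$ such that $u_tj\to j$ in norm for every $j\in J$ and $\|[u_t,a]\|\to 0$ for every $a\in A$. Because $\pi\circ s=\id_B$, the *-linear map $p(a)=a-s\pi(a)$ is a norm-one projection of $A$ onto $J=\ker\pi$, and the candidate asymptotic homomorphism is morally
\[
  \sigma_t(a)=u_t^{1/2}\bigl(a-s\pi(a)\bigr)u_t^{1/2}\in J,
\]
possibly with an additional refinement needed for asymptotic multiplicativity. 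After stabilization and suspension, the class of $\sigma_t$ lives in $D(A,J)$ and, via the natural map $D(A,J)\to E(A,J)$, provides the sought-after $\sigma$.

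Granting that $\sigma_t$ is a genuine asymptotic homomorphism, both identities reduce to pointwise convergence statements that can be promoted to $E$-theoretic identities via \eqref{eq:postcomposition with an asymptotic homomorphism} together with the standard direct-sum/rotation trick. For the first, $\sigma_t(\iota(j))=u_t^{1/2}ju_t^{1/2}\to j$ in norm for every $j\in J$, so the linear interpolation lifted to the asymptotic algebra of $IJ$ exhibits $\sigma_t\circ\iota$ as asymptotically homotopic to $\id_J$, giving $\sigma\bullet\kappa(\iota)=\kappa(\id_J)$. For the second, one shows that the direct sum $(\iota\circ\sigma_t)\oplus s\pi\colon A\to M_2(A)$ is asymptotically homotopic to $\id_A\oplus 0$ via a rotation in $M_2$ whose entries are built from $u_t^{1/2}$ and $(1-u_t)^{1/2}$; pointwise this uses the convergence $u_t^{1/2}(a-s\pi(a))u_t^{1/2}+s\pi(a)\to a$ in norm, together with quasicentrality to control the off-diagonal entries. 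This yields $\kappa(\iota)\bullet\sigma+\kappa(s\pi)=\kappa(\id_A)$.

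The main obstacle is asymptotic multiplicativity of $\sigma_t$. Writing $j=a-s\pi(a)$ and $k=b-s\pi(b)$, a direct expansion gives
\[
  \sigma_t(ab)-\sigma_t(a)\sigma_t(b)=u_t^{1/2}\bigl[j(1-u_t)k+js\pi(b)+s\pi(a)k\bigr]u_t^{1/2}.
\]
The first summand vanishes in the limit because $u_t$ is an approximate unit for $J$, but the two cross terms are genuine nonzero elements of $J$ under the naive formula. Eliminating them is the technical heart of the Connes--Higson construction: one exploits the quasicentrality $\|[u_t,s\pi(a)]\|\to 0$ together with continuous functional calculus of $u_t$, and in the discrete setting one may pass along a diagonal subsequence of $(u_t)$ chosen so that the relevant commutator and $(1-u_t)$-estimates are simultaneously small on a fixed countable dense subset of $A$, absorbing the cross terms into an error that vanishes in $\mathcal{A}_\delta J$. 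Once asymptotic multiplicativity is secured the rest of the argument is the formal bookkeeping sketched above.
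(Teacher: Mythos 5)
There is a genuine gap: the candidate map $\sigma_t(a)=u_t^{1/2}(a-s\pi(a))u_t^{1/2}$ is not an asymptotic homomorphism, and the proposed fix via quasicentrality cannot succeed. Your own expansion isolates the cross term $u_t^{1/2}\bigl(js\pi(b)+s\pi(a)k\bigr)u_t^{1/2}$ with $j=a-s\pi(a)$, $k=b-s\pi(b)$. Quasicentrality lets you slide $u_t^{1/2}$ past $s\pi(b)$, and the approximate-unit property then shows this term converges in norm to $js\pi(b)+s\pi(a)k$, a \emph{fixed} (generically nonzero) element of $J$ --- not to zero. Passing to a diagonal subsequence of $(u_t)$, or refining the functional calculus, cannot make a sequence with nonzero limit vanish in $\mathcal A_\delta J$. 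A concrete obstruction: take $0\to C_0(\R)\to C(S^1)\xrightarrow{\ev_\ast}\C\to 0$, split by constant functions. Since everything commutes, $\sigma_t(a)=u_t(a-a(\ast))$ converges pointwise to the linear retraction $p(a)=a-a(\ast)$, and $p(ab)-p(a)p(b)=a(\ast)(b-b(\ast))+(a-a(\ast))b(\ast)\neq 0$. A family of maps whose pointwise limit fails to be multiplicative cannot be asymptotically multiplicative, so no refinement of $(u_t)$ will produce an asymptotic homomorphism out of this formula.

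The underlying issue is that the retraction $a\mapsto a-s\pi(a)$ is $*$-linear but not multiplicative, and no compression by $(u_t)$ repairs this. The standard way to produce $\sigma$ is via the Cuntz quasi-homomorphism $(\id_A,\,s\circ\pi)\colon A\rightrightarrows A\rhd J$: the pair of $*$-homomorphisms $\Phi(a)=\diag(a,s\pi(a))\colon A\to M_2(A)$ together with the odd symmetry $F=\begin{psmallmatrix}0&1\\1&0\end{psmallmatrix}$ satisfies $[\Phi(a),F]\in M_2(J)$, hence defines a Kasparov $A$\nobreakdash-$J$-module; one then passes to $E$-theory (either through the comparison map $KK\to E$, or through the $q$-algebra/asymptotic-homomorphism translation, which uses the quasicentral approximate unit but at the level of $M_2$ or $\Sigma A$, not as a naive compression $A\to J$). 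Alternatively, one can deduce the existence of $\sigma$ abstractly from split-exactness of $E$-theory and the six-term sequence. Either route differs essentially from your proposal because it never asserts that the linear retraction can be massaged into an asymptotic homomorphism $A\to J$. The rest of your argument (the identification $\sigma\bullet\kappa(\iota)=\kappa(\id_J)$ via $u_t^{1/2}ju_t^{1/2}\to j$, and the rotation trick for $\kappa(\iota)\bullet\sigma+\kappa(s\pi)=\kappa(\id_A)$) is conditional on having a genuine asymptotic homomorphism in hand, so it does not repair the gap.
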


Finally, we will need a few concrete calculations of E-theory and D-theory groups, namely of the groups $E(\C,B)$ and
$D(\Sigma,B)$ where $\Sigma=\Sigma\C=C_0(\R)$ is the suspension algebra.

For E-theory, the relevant calculation goes back to \cite[Theorem~4.1]{rosenberg-role-of-ktheory} which far predates the
development of E-theory. Let us assume for simplicity that $B$ is unital. Let $p\in M_n(B)$ be a projection. Then we can
consider the *-ho\-mo\-mor\-phism $f_p\colon\C\to B\otimes\mathcal K$ which is determined by the property that $f_p(1)=p$. Now
we define a map
\[
  \Phi_B\colon K_0(B)\to E(\C,B)
\]
by $\Phi_B[p]=\kappa(f_P)\in E(\C,B\otimes\mathcal K)\cong E(\C,B)$. For the proof of the following statement we refer to
\cite[Proposition~2.19]{guentner-higson-trout-equivariant-e-theory}.

\begin{thm}\label{thm:calculation of E(C,B)}
  The map $\Phi_B$ is a group isomorphism for all unital C*-algebras $B$.\qed
\end{thm}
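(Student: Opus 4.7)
The plan is to check that $\Phi_B$ is well-defined and a group homomorphism, and then to construct an inverse $\Psi_B\colon E(\C,B)\to K_0(B)$. Both verifications hinge on translating the K-theoretic notions (Murray-von Neumann equivalence, direct sum) into the corresponding operations on *-homomorphisms $\C\to B\otimes\mathcal{K}$ determined by projections.

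For well-definedness, suppose projections $p\in M_n(B)$ and $q\in M_m(B)$ represent the same class in $K_0(B)$. After stabilization they become unitarily equivalent, say $q=u^*pu$ for a unitary $u\in B\otimes\mathcal{K}$ (viewing everything inside $B\otimes\mathcal{K}$ via block inclusions). Since the unitary group of $B\otimes\mathcal{K}$ is path-connected, a continuous path from $u$ to $\id$ gives a homotopy of *-homomorphisms from $f_p$ to $f_q$. Post-composing with $\kappa_{\Sigma B\otimes\mathcal{K}}$ and suspending yields an asymptotic homotopy between the associated asymptotic morphisms, so $\Phi_B[p]=\Phi_B[q]$. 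For the homomorphism property, the direct sum $p\oplus q$ of projections corresponds to the direct sum $f_p\oplus f_q$ of *-homomorphisms into $(B\otimes\mathcal{K})\oplus(B\otimes\mathcal{K})\cong B\otimes\mathcal{K}$, and by construction addition in $E(\C,B)$ is realized by exactly this direct-sum operation together with the $\mathcal{K}\oplus\mathcal{K}\cong\mathcal{K}$ identification; hence $\Phi_B([p]+[q])=\Phi_B[p]+\Phi_B[q]$.

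For the inverse, the idea is to extract a K-theory class from an asymptotic homomorphism. Using stability and the suspension isomorphism $E(\C,B)\cong E(\Sigma\C,\Sigma B)$, one can represent a class in $E(\C,B)$ at the level of an asymptotic morphism $\varphi_t\colon\C\to B\otimes\mathcal{K}$. Such an asymptotic morphism is determined by the single element $\varphi_t(1)$, which is asymptotically a self-adjoint idempotent; applying a continuous cutoff $\chi$ which equals $0$ near $0$ and $1$ near $1$ to $\varphi_t(1)$ via continuous functional calculus produces, for $t$ sufficiently large, a genuine projection $p_t\in B\otimes\mathcal{K}$. One verifies that the class $[p_t]\in K_0(B)$ is independent of $\chi$, of $t$ (for $t$ large), and of the chosen representative of the asymptotic homotopy class, thereby defining $\Psi_B$. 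The equation $\Psi_B\circ\Phi_B=\id$ is immediate from the construction, since $f_p(1)=p$ is already a projection.

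The main obstacle is showing $\Phi_B\circ\Psi_B=\id$: any asymptotic morphism has to be rectified to an honest *-homomorphism of the form $f_p$ up to asymptotic homotopy. This is essentially a straightening argument, in which functional calculus is used to deform $\varphi_t$ along the path $s\mapsto\chi_s(\varphi_t(1))$ where $\chi_s$ interpolates between the cutoff and the identity, producing an asymptotic homotopy from $\varphi$ to $\kappa(f_{p_t})$. For non-separable $B$, one finally invokes the definition $E(\C,B)=\varinjlim E(\C,B')$ over separable C*-subalgebras together with the naturality of $\Phi$ and the compatibility of $K_0$ with such inductive limits, reducing the general case to the separable one.
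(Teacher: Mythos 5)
Your argument for the well-definedness of $\Phi_B$ (unitary equivalence after stabilization, path-connectedness of the relevant unitary group) and for its additivity via direct sums is fine, and the functional-calculus recipe for extracting a projection from an asymptotic morphism $\C\to B\otimes\mathcal K$ is indeed the right device for constructing the inverse at that level.

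The genuine gap is in the sentence ``Using stability and the suspension isomorphism $E(\C,B)\cong E(\Sigma\C,\Sigma B)$, one can represent a class in $E(\C,B)$ at the level of an asymptotic morphism $\varphi_t\colon\C\to B\otimes\mathcal K$.'' By definition $E(\C,B)=\bbrack{\Sigma\C\otimes\mathcal K,\Sigma B\otimes\mathcal K}$, so what you actually need is that the natural map $\bbrack{\C,B\otimes\mathcal K}\to E(\C,B)$ (suspend, tensor with $\mathcal K$) is a bijection. Neither of the two facts you invoke delivers this. Stability of $E$-theory --- the isomorphisms $E(A,B)\cong E(A,B\otimes\mathcal K)$ and $E(A\otimes\mathcal K,B)\cong E(A,B)$ --- is a statement about the $E$-theory \emph{groups}, not about the raw sets $\bbrack{\cdot,\cdot}$ of asymptotic homotopy classes; it does not let you strip the tensor factor $\mathcal K$ off the source of a given asymptotic morphism. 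And the suspension map $\Sigma\colon E(\C,B)\to E(\Sigma\C,\Sigma B)$ adds a suspension to both slots, so even as an isomorphism it points in the wrong direction to remove the suspension already present in the source $\Sigma\C\otimes\mathcal K$. To establish the bijection $\bbrack{\C,B\otimes\mathcal K}\cong E(\C,B)$ one needs the desuspension and destabilization theorems for asymptotic homomorphisms (or an argument through the composition product and Bott periodicity), which you neither prove nor cite. That bijection is exactly the substantive content of \cite[Proposition~2.19]{guentner-higson-trout-equivariant-e-theory}, which the paper invokes in lieu of a proof. In short, your proposal handles the $K_0(B)$-side bookkeeping correctly, but the identification of $E(\C,B)$ with the naive asymptotic morphism set $\bbrack{\C,B\otimes\mathcal K}$ is where the theorem actually lives, and the justification you give for it does not close the gap.
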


For D-theory, there is a similar isomorphism (although the proof is more complicated), which was already hinted at in Thomsen's
paper \cite{thomsen-discrete-asymptotic-homomorphisms}. Namely, if $B$ is unital then we define a map
\[
  \Psi_B\colon\prod_{n\in\N}K_0(B)\to D(\Sigma,B)
\]
by the prescription $\Psi_B(([p_n])_{n\in\N})=[\Sigma^2f_{(p_n)}\otimes\id_{\mathcal K}]\in\bbrack{\Sigma^2\C\otimes\mathcal
K,\Sigma^2B\otimes\mathcal K\otimes\mathcal K}_\delta=D(\Sigma,B\otimes\mathcal K)\cong D(\Sigma,B)$, where
$f_{(p_n)}\colon\C\to\mathcal A_\delta(B\otimes\mathcal K)$ is the unique discrete asymptotic homomorphism such that
$f_{(p_n)}(1)=[n\mapsto p_n]$. Thus, $\Sigma^2f_{(p_n)}\otimes\id_{\mathcal K}(\phi\otimes\psi\otimes
T)=[n\mapsto\phi\otimes\psi\otimes p_n\otimes T]$ for all $\phi,\psi\in\Sigma$ and $T\in\mathcal K$.

\begin{thm}[{\cite[Theorem~3.8.11]{hunger-asymptotically-flat-fredholm-bundles}}]\label{thm:calculation of D(Sigma,B)}
  The map $\Psi_B$ is a surjective group homomorphism with $\ker\Psi_B=\bigoplus_{n\in\N}K_0(B)$, for all unital C*-algebras
  $B$.\qed
\end{thm}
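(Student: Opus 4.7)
The argument has three parts: showing $\Psi_B$ is a well-defined group homomorphism, proving $\bigoplus_{n\in\N}K_0(B)\subseteq\ker\Psi_B$, and finally establishing surjectivity together with the reverse kernel inclusion. For well-definedness, the key observation is that whenever the $p_n\in B\otimes\mathcal K$ are projections, $[n\mapsto p_n]$ is a projection in $\mathcal A_\delta(B\otimes\mathcal K)$, so $f_{(p_n)}$ is a genuine $*$-homomorphism $\C\to\mathcal A_\delta(B\otimes\mathcal K)$. Changing each representative within its $K_0$-class corresponds to conjugating by a path of partial isometries of norm at most $1$; these paths assemble into a homotopy in the $\mathcal A_\delta$-sense using stability of $B\otimes\mathcal K$, so the class in $D(\Sigma,B)$ is independent of the choice of representatives. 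Additivity is routine from compatibility of direct sums with the stability of $D(-,-)$.

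For the containment $\bigoplus_{n\in\N}K_0(B)\subseteq\ker\Psi_B$, suppose $[p_n]=0$ for all $n\geq N$. I may then pick the representative $p_n=0$ at these indices, so that $[n\mapsto p_n]\in C_0(\N,B\otimes\mathcal K)$ is zero in the quotient $\mathcal A_\delta(B\otimes\mathcal K)$. Hence $f_{(p_n)}=0$ and $\Psi_B(([p_n]))=0$.

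The bulk of the proof is surjectivity together with the opposite kernel inclusion. Given a class in $D(\Sigma,B)$ represented by a $*$-homomorphism $\Phi\colon\Sigma^2\otimes\mathcal K\to\mathcal A_\delta(\Sigma^2B\otimes\mathcal K)$, a standard lifting argument applied to the quotient $C_b(\N,\Sigma^2B\otimes\mathcal K)\twoheadrightarrow\mathcal A_\delta(\Sigma^2B\otimes\mathcal K)$ produces a sequence of contractive maps $\Phi_n\colon\Sigma^2\otimes\mathcal K\to\Sigma^2B\otimes\mathcal K$ whose failure to be $*$-homomorphisms vanishes as $n\to\infty$. For $n$ sufficiently large a small perturbation turns $\Phi_n$ into a genuine $*$-homomorphism, which represents a class in $\bbrack{\Sigma^2\otimes\mathcal K,\Sigma^2B\otimes\mathcal K}=E(\Sigma,\Sigma B)\cong K_0(B)$ via Theorem~\ref{thm:calculation of E(C,B)} together with the suspension isomorphism; set $[p_n]$ to be this class for large $n$, and $[p_n]=0$ for the remaining finitely many indices. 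A direct construction of a discrete asymptotic homotopy interpolating between the lifted sequence $(\Phi_n)_n$ and the canonical representative $[n\mapsto p_n]$ associated to $\Psi_B(([p_n]))$—which may be altered at finitely many indices without affecting the class in $D$—then yields $\Psi_B(([p_n]))=[\Phi]$. For the reverse kernel inclusion, suppose $\Psi_B(([p_n]))=0$; extracting the $n$-th level from a nullhomotopy in $D(\Sigma,B)$ gives, for each sufficiently large $n$, a nullhomotopy of the representative of $[p_n]$ in $E(\Sigma,\Sigma B)$, forcing $[p_n]=0$ eventually.

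The main obstacle is the surjectivity step, which requires reconciling the discrete asymptotic structure indexed by $\N$ with the continuous asymptotic structure underlying $E$-theory. Producing a sequence of lifts with enough regularity to support a controlled homotopy, and showing that two a priori different presentations coincide in $D(\Sigma,B)$, is the technical heart of the theorem. This is precisely the ground that Thomsen's D-theory machinery is designed to cover, and whose detailed execution for the present statement is carried out in the cited dissertation.
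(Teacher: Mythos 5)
Your reduction to three sub-claims is reasonable, and the first two are essentially fine: well-definedness via Murray--von Neumann equivalence of $[n\mapsto p_n]$ and $[n\mapsto p_n']$ in $\mathcal A_\delta(B\otimes\mathcal K)$, and the inclusion $\bigoplus_{n\in\N}K_0(B)\subseteq\ker\Psi_B$ via choosing eventually-zero representatives. But the third step, which you correctly identify as the bulk of the work, has a genuine gap rather than merely deferred detail.

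The central claim — that ``for $n$ sufficiently large a small perturbation turns $\Phi_n$ into a genuine $*$-homomorphism'' — is not a standard lifting argument, and in general it is \emph{false} for asymptotic morphisms out of a separable $C^*$-algebra. If it were routine, $E$-theory would reduce to homotopy classes of $*$-homomorphisms into $B\otimes\mathcal K$, and $E(A,B)$ would agree with $KK(A,B)$ for all $A$; the entire point of Connes--Higson's construction is that this fails. A perturbation statement of the kind you need is a (weak) semiprojectivity or stability property of the specific domain algebra $\Sigma^2\otimes\mathcal K$. This algebra is neither finitely generated nor finitely presented (because of the $\mathcal K$ tensor factor), so the classical semiprojectivity machinery does not apply, and tensoring by $\mathcal K$ does not obviously preserve any such property. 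Whatever replacement is used must also control the \emph{homotopies}: in your surjectivity argument you still need to exhibit a discrete asymptotic homotopy between $[\Phi]$ and $\Sigma^2 f_{(p_n)}\otimes\id_{\mathcal K}$, and in the reverse kernel inclusion you ``extract the $n$-th level from a nullhomotopy'', which produces only an \emph{approximate} homotopy that itself would need to be perturbed to a genuine one --- strictly harder than perturbing a single map. Lurking behind all of this is the fact that $K_0$ does not commute with infinite products of $C^*$-algebras without further hypotheses, so identifying $K_0(\mathcal A_\delta(B\otimes\mathcal K))$ (or the relevant $[[\,\cdot\,,\cdot\,]]_\delta$ set) with $\prod_nK_0(B)/\bigoplus_nK_0(B)$ involves a Milnor-type $\varprojlim^1$ analysis and a vanishing argument, not a termwise perturbation. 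Your closing sentence concedes that the ``detailed execution $\ldots$ is carried out in the cited dissertation''; since that execution \emph{is} the content of the theorem, what you have written is an accurate description of the difficulty rather than a proof of it.
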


\section{The asymptotic index of an asymptotic representation}

In this section, we fix a unital C*-algebra $B$ and an asymptotic Fredholm representation $(W_n,\rho_n,F_n)_{n\in\N}$. We want
to construct the asymptotic index
\[
  \asind\left((W_n,\rho_n,F_n)_{n\in\N}\right)\in D(\Sigma C^*G,B),
\]
where $C^*G$ is the \emph{maximal} group C*-algebra of $G$. The construction parallels the calculation of $\ind F_E$ for an
almost flat Fredholm bundle $(E,F_E)$ in Theorem~\ref{thm:alternative calculation of the index of an almost flat Fredholm bundle}.
Firstly, we want to get rid of the Hilbert $B$-modules $W_n$. In order to do this, we choose even isometric isomorphisms
$U_n\colon W_n\oplus\mathcal H_B\to\mathcal H_B$, which exist by Kasparov's Stabilization Theorem. For all $w\in\Fr(L)$ we
define
\[
  \rho_n'(w)=U_n(\rho_n(w)\oplus 0)U_n^*
\]
and
\[
  F_n'=U_n(F_n\oplus\id)U_n^*.
\]
Further, we define
\[
  \rho_n''(w)=U(F_n')((\rho_n'(w)\oplus 0)\oplus 0)U(F_n')^*\in\mathcal L_B(\mathcal H_B')
\]
and
\[
  \tilde\rho_n(w)=V\rho_n''(w)V^*\in\mathcal L_B(\mathcal H_B)
\]
where $V\colon\mathcal H_B'\to\mathcal H_B$ is another even unitary isomorphism. Proposition~\ref{prop:properties of U(F)}
easily implies that $[\rho_n''(g),T]$ is compact for all $g\in L$, where
$T=\begin{psmallmatrix}0&1\\1&0\end{psmallmatrix}\in\mathcal L_B(\mathcal H_B')$. As before, we put $T'=VTV^*$. Then
\[
  \tilde\rho_n(w)\in Q=Q_{T'}=\left\{x\in\mathcal L_B^{\mathrm{ev}}(\mathcal H_B):[x,T']\in\mathcal K_B(\mathcal H_B)\right\}
\]
for all $w\in\Fr(L)$.

\begin{lem}\label{lem:definition of rho for the asymptotic index}
  There exists a unique *-ho\-mo\-mor\-phism $\rho\colon C^*G\to\mathcal A_\delta Q$ such that
  \[
    \rho(\pi(w))=[n\mapsto\tilde\rho_n(w)]
  \]
  for all $w\in\Fr(L)$, where $\pi\colon\Fr(L)\to G$ is the canonical projection and where we identify $\pi(w)\in G$ with its
  image in $C^*G$.
\end{lem}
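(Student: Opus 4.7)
The plan is to factor the construction through the group algebra $\C G$ and then invoke the universal property of the maximal group C*-algebra.

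First, I observe that each $\tilde\rho_n$ extends to a (non-unital) *-algebra homomorphism $\C\Fr(L)\to Q$. Indeed, $\rho_n\colon\Fr(L)\to U(W_n)$ is a group homomorphism by assumption, and since $V_n=U_n|_{W_n\oplus 0}\colon W_n\to\mathcal H_B$ is an isometry with $V_n^*V_n=\id_{W_n}$, one checks $\rho_n'(ww')=\rho_n'(w)\rho_n'(w')$ and $\rho_n'(w^{-1})=\rho_n'(w)^*$. The further conjugations by $U(F_n')$ and $V$ preserve these properties, so $\tilde\rho_n$ is multiplicative and *-preserving. However, $\tilde\rho_n$ is typically not unital: the element $p_n:=\tilde\rho_n(e)$ is the projection onto the image of the isometry built from $V_n$, conjugated by $U(F_n')$ and $V$. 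Packaging over $n$, I obtain a *-algebra homomorphism $\tilde\rho\colon\C\Fr(L)\to\mathcal A_\delta Q$ by $\tilde\rho(w)=[n\mapsto\tilde\rho_n(w)]$ (boundedness is automatic because all operators are contractions). The class $p=[n\mapsto p_n]\in\mathcal A_\delta Q$ is a projection, the image of $\tilde\rho$ lies in the corner $p(\mathcal A_\delta Q)p$, and each $\tilde\rho(w)$ is a unitary there since $\tilde\rho(w)\tilde\rho(w^{-1})=\tilde\rho(e)=p$ and similarly on the other side.

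Second, I show that $\tilde\rho$ descends to a unital *-homomorphism $\bar\rho\colon\C G\to p(\mathcal A_\delta Q)p$. This amounts to proving $\tilde\rho(r)=p$ for every $r$ in the normal subgroup $\langle R\rangle=\ker\pi$, i.e.\ $\|\tilde\rho_n(r)-p_n\|\to 0$. By hypothesis $\|\rho_n(s)-\id_{W_n}\|\to 0$ for all $s\in R$. A standard telescoping argument using that each $\rho_n(g)$ is a contraction extends this to $\|\rho_n(r)-\id_{W_n}\|\to 0$ for every $r\in\langle R\rangle$ (conjugation is norm-preserving on unitaries and products use $\|\rho_n(r_1r_2)-\id\|\le\|\rho_n(r_1)-\id\|+\|\rho_n(r_2)-\id\|$). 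The successive operations $V_n(-)V_n^*$, $U(F_n')((-\oplus 0)\oplus 0)U(F_n')^*$, and $V(-)V^*$ are all contractions, so $\|\tilde\rho_n(r)-p_n\|\le\|\rho_n(r)-\id_{W_n}\|\to 0$, as required.

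Finally, since $C^*G$ is the \emph{maximal} group C*-algebra and $\bar\rho$ is a unital *-homomorphism from $\C G$ sending every group element $g$ to a unitary in the C*-algebra $p(\mathcal A_\delta Q)p$, the universal property yields a unique extension $C^*G\to p(\mathcal A_\delta Q)p$. Composition with the inclusion $p(\mathcal A_\delta Q)p\hookrightarrow\mathcal A_\delta Q$ produces the desired *-homomorphism $\rho$. Uniqueness of $\rho$ is automatic, because $\{\pi(w):w\in\Fr(L)\}=G$ spans the dense *-subalgebra $\C G\subset C^*G$.

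The main subtlety is the non-unital nature of $\tilde\rho_n$: one cannot directly apply the universal property of $C^*G$ to the map $\C G\to\mathcal A_\delta Q$, because the group elements land on partial isometries rather than unitaries. The resolution is to observe that, after passing to the quotient $\mathcal A_\delta Q$, the projections $p_n$ assemble into a single projection $p$ and the image lies in the corner $p(\mathcal A_\delta Q)p$, in which these partial isometries become genuine unitaries and the universal property becomes applicable.
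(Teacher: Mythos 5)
Your proof is correct and follows essentially the same route as the paper's: pass to the corner $P(\mathcal A_\delta Q)P$ where the $\tilde\rho_n(w)$ become unitaries, show the relations die in the quotient, and invoke the universal property of the maximal group C*-algebra. The only difference is that you spell out the telescoping/conjugation argument showing $\|\rho_n(r)-\id\|\to 0$ for all $r\in\ker\pi$ (not just $r\in R$), a step the paper's proof of this lemma takes for granted but which is recorded separately in Lemma~\ref{lem:equivalence of asymptotic representations}.
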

\begin{proof}
  Uniqueness is clear since $C^*G$ is generated by the elements of $G\subset C^*G$. For existence, we consider
  $P_n=\tilde\rho_n(1)\in Q$. It follows from the definition of $\tilde\rho_n$ and the fact that each $\rho_n$ is a group
  homomorphism that $\tilde\rho_n(ww')=\tilde\rho_n(w)\tilde\rho_n(w')$ for all $w,w'\in\Fr(L)$, and that
  $\tilde\rho_n(w)^*=\tilde\rho_n(w^{-1})$ since the analogous statement is true for $\rho_n$. In particular,
  $\tilde\rho_n(w)^*\tilde\rho_n(w)=\tilde\rho_n(1)=P_n=\tilde\rho_n(w)\tilde\rho_n(w)^*$, so that each $\tilde\rho_n(w)$ is
  unitary in the unital C*-algebra $P_nQP_n$. Thus, each $\tilde\rho_n\colon\Fr(L)\to P_nQP_n$ is a unitary representation, so
  that $\tilde\rho\colon\Fr(L)\to P(\mathcal A_\delta Q)P$, $\tilde\rho(w)=[n\mapsto\tilde\rho_n(w)]$ is a unitary
  representation as well if we define $P=[n\mapsto P_n]\in\mathcal A_\delta Q$.

  Now if $r\in\ker\pi$ is arbitrary, we get that $\tilde\rho(r)=[n\mapsto\tilde\rho_n(r)]=[n\mapsto P_n]=\tilde\rho(1)$ because
  $\lim_{n\to\infty}\|\tilde\rho_n(r)-P_n\|=0$ by definition of an asymptotic representation. Therefore,
  $\tilde\rho\colon\Fr(L)\to P(\mathcal A_\delta Q)P$ descends to a unitary representation $G\to P(\mathcal A_\delta Q)P$, which
  extends to a *-ho\-mo\-mor\-phism $\rho\colon C^*G\to P(\mathcal A_\delta Q)P\subset\mathcal A_\delta Q$ by the universal
  property of the maximal group C*-algebra. It is clear that $\rho(\pi(w))=[n\mapsto\tilde\rho_n(w)]$ as required.
\end{proof}

Now we put
\[
  \hat\rho=\Sigma^2\rho\otimes\id_{\mathcal K}\colon\Sigma^2C^*G\otimes\mathcal K\to\Sigma^2\mathcal A_\delta Q\otimes\mathcal
  K\to\mathcal A_\delta(\Sigma^2Q\otimes\mathcal K).
\]
Therefore, $\hat\rho$ represents an element $[\hat\rho]\in\bbrack{\Sigma^2C^*G\otimes\mathcal K,\Sigma^2Q\otimes\mathcal
K}_\delta=D(\Sigma C^*G,Q)$. We consider the split short exact sequence
\[
  \begin{tikzpicture}
    \matrix (m) [matrix of math nodes, row sep=3em, column sep=3em, commutative diagrams/every cell]
    {
      0 & \mathcal K_B(H_B) & Q & \mathcal L_B(H_B) & 0. \\
    };
    \path[-stealth, commutative diagrams/.cd, every label] (m-1-1) edge (m-1-2) (m-1-2) edge node [above] {$i_{T'}$} (m-1-3)
      (m-1-3) edge node [above] {$\pi_{T'}$} (m-1-4) (m-1-4) edge [bend left=30] node [below] {$s_{T'}$} (m-1-3) edge (m-1-5);
  \end{tikzpicture}
\]
By Proposition~\ref{prop:E-theory element associated to a split short exact sequence} there exists a class $\sigma\in
E(Q,\mathcal K_B(H_B))$ such that $\kappa(i_{T'})\bullet\sigma+\kappa(s_{T'}\pi_{T'})=\kappa(\id_Q)$ and
$\sigma\bullet\kappa(i_{T'})=\kappa(\id_{\mathcal K_B(H_B)})$.

\begin{dfn}
  The \emph{asymptotic index} of the asymptotic Fredholm representation is defined to be
  \[
    \asind\left((W_n,\rho_n,F_n)_{n\in\N}\right)=\sigma\bullet[\hat\rho]\in D(\Sigma C^*G,\mathcal K_B(H_B))\cong D(\Sigma
    C^*G,B).
  \]
\end{dfn}

It is straightforward to show that $\asind((W_n,\rho_n,F_n)_{n\in\N})$ is independent of the choices of isomorphisms $U_n$ and
$V$, and that asymptotically equivalent asymptotic homomorphisms have the same asymptotic index.

\section{The index of an asymptotically flat Fredholm bundle}

We have gathered all the necessary preliminaries to formulate and prove our main theorem which relates the index of an
asymptotically flat Fredholm bundle to the asymptotic index of the associated asymptotic Fredholm representation. Let $X$ be a
finite connected simplicial complex, let $B$ be a unital C*-algebra, and let $(E_n,F_n)_{n\in\N}$ be an asymptotically flat
Fredholm bundle over $X$ with underlying C*-algebra $B$. Choose a finite presentation $G=\pi_1(X;v_0)=\langle L\mid R\rangle$
and simplicial loops $\Gamma_g$ which represent the generators $g\in L$. Let $(W_n,\rho_n,\hat F_n)_{n\in\N}$ be the asymptotic
Fredholm representation which is associated to the asymptotically flat Fredholm bundle $(E_n,F_n)_{n\in\N}$ as described in
section~\ref{sec:asymptotic Fredholm representations}.

Consider the \emph{Mishchenko bundle}
\[
  M_X=(C^*G\times\tilde X)/G,
\]
where $\tilde X$ is the universal cover of $X$ and where the quotient is taken by the diagonal action of $G$ on the product
$C^*G\times\tilde X$. Then $M_X$ is a Hilbert $C^*G$-module bundle over $X$ and hence defines a class $[M_X]\in K^0(X;C^*G)\cong
K_0(C(X)\otimes C^*G)$. Let $\Phi=\Phi_{C(X)\otimes C^*G}\colon K_0(C(X)\otimes C^*G)\to E(\C,C(X)\otimes C^*G)$ be the
isomorphism of Theorem~\ref{thm:calculation of E(C,B)}.

\begin{thm}\label{thm:main theorem}
  Under the identification $D(\Sigma,C(X)\otimes B)\cong\prod_{n\in\N}K_0(C(X)\otimes B)/\bigoplus_{n\in\N}K_0(C(X)\otimes B)$
  of Theorem~\ref{thm:calculation of D(Sigma,B)}, the classes
  \[
    \left(\id_{C(X)}\otimes\asind\left((W_n,\rho_n,\hat F_n)_{n\in\N}\right)\right)\bullet\Sigma\Phi[M_X]\in
    D(\Sigma,C(X)\otimes B)
  \]
  and
  \[
    \left[(\ind F_n)_{n\in\N}\right]\in\frac{\prod_{n\in\N}K_0(C(X)\otimes B)}{\bigoplus_{n\in\N}K_0(C(X)\otimes B)}
  \]
  coincide.
\end{thm}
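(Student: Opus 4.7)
The plan is to compute both sides of the claimed equality as explicitly as possible, and to match them by unwinding the $D$-theory product on the left-hand side down to a matrix whose entries can be directly compared to the projection $\tilde P^{E_n}$ of Theorem~\ref{thm:alternative calculation of the index of an almost flat Fredholm bundle}.

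First, I would fix an explicit projection representing the Mishchenko bundle. It is standard that $[M_X] \in K_0(C(X) \otimes C^*G)$ is represented by the projection $P_M(x) = \bigl(\sqrt{\lambda_j(x)\lambda_k(x)}\, g_{jk}\bigr)_{j,k} \in M_N(C(X) \otimes C^*G)$, where $g_{jk} \in G \subset C^*G$ is the class of a simplicial loop based at $v_0$ that runs from $v_0$ to $v_j$, across the edge $[v_j,v_k]$, and back to $v_0$ along chosen simplicial paths. Then $\Phi[M_X]$ is represented by the $*$-homomorphism $f_{P_M}\colon\C \to C(X) \otimes C^*G \otimes \mathcal K$ sending $1$ to $P_M$, and $\Sigma\Phi[M_X]$ by its suspension.

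Next, I would use formula~\eqref{eq:postcomposition with an asymptotic homomorphism} to unwind the left-hand side. Since the asymptotic index is the composition $\sigma\bullet[\hat\rho]$ and $\Sigma\Phi[M_X]$ is the $\kappa$-class of $\Sigma f_{P_M}\otimes\id_\mathcal{K}$, the product $(\id_{C(X)}\otimes\asind)\bullet\Sigma\Phi[M_X]$ is represented by first applying $\id_{C(X)}\otimes\hat\rho$ to the suspension of $P_M$ and then composing with $\id_{C(X)}\otimes\sigma$. Evaluating $\id_{C(X)}\otimes\hat\rho$ entrywise gives the $M_N(\mathcal A_\delta(C(X)\otimes Q))$-element represented by the sequence of matrices $\bigl(\sqrt{\lambda_j(x)\lambda_k(x)}\,\tilde\rho_n(g_{jk})\bigr)_{j,k}$.

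The main obstacle, and the crux of the proof, is to identify this sequence with $[(\tilde P^{E_n})_{n\in\N}]$ in $M_N(\mathcal A_\delta(C(X)\otimes Q))$. This reduces to proving that $\tilde\rho_n(g_{jk})$ is asymptotically close to $V \Psi_{jk}''(x) V^*$ uniformly in $x \in S_j \cap S_k$. After choosing the same stabilization $U_n$ and the same unitary $V$ in both constructions, the factors $U(F_n')$ and $V$ cancel, so the problem boils down to showing $\rho_n(g_{jk}) \approx \Psi_{jk}(\tfrac 12(v_j+v_k))$ up to an error that tends to zero. Here $\rho_n(g_{jk})$ is the parallel transport in $E_n$ along the chosen loop, while $\Psi_{jk}$ is the transition function, and their comparison amounts to parallel transport along a contractible simplicial loop in $X$; by Theorem~\ref{thm:transport along contractible loops} this error is of order $\epsilon_n \to 0$. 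Moreover, the fact that $F_n$ asymptotically commutes with both quantities (by definition of an $\epsilon$-flat Fredholm bundle) ensures that the resulting operators agree modulo $C_0(\N, C(X)\otimes Q)$ even after the $U(F_n')$-twist, using Proposition~\ref{prop:properties of U(F)}.

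Finally, applying $\sigma \in E(Q,\mathcal K_B(H_B))$ to $[(\tilde P^{E_n})_n]$ yields the sequence $(\rho_X[\tilde P^{E_n}])_n \in \prod_n K_0(C(X;B))$, because $\sigma$ is precisely the $E$-theory incarnation of the splitting homomorphism associated to the extension $0 \to \mathcal K_B(H_B) \to Q \to \mathcal L_B(H_B) \to 0$ that defines $\rho_X$; the identities $\kappa(i_{T'})\bullet\sigma + \kappa(s_{T'}\pi_{T'}) = \kappa(\id_Q)$ and $\sigma\bullet\kappa(i_{T'}) = \kappa(\id_{\mathcal K_B(H_B)})$ characterize $\sigma$ in the same way that Lemma~\ref{lem:homomorphism associated to a split exact sequence of abelian groups} characterizes $\rho_X$. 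By Theorem~\ref{thm:alternative calculation of the index of an almost flat Fredholm bundle}, this sequence equals $(\ind F_n)_n$, and under the identification of Theorem~\ref{thm:calculation of D(Sigma,B)} this is exactly the right-hand side.
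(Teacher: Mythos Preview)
Your outline is correct and follows essentially the same route as the paper: represent $[M_X]$ by the explicit matrix projection $P_M$, push it through $\id_{C(X)}\otimes\hat\rho$, compare the resulting sequence to $(\tilde P^{E_n})_n$ entrywise, and then identify the effect of postcomposing with $\sigma$ with the splitting map $\rho_X$ via the characterizing identities of Proposition~\ref{prop:E-theory element associated to a split short exact sequence} and Lemma~\ref{lem:homomorphism associated to a split exact sequence of abelian groups}.

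The one organizational difference worth noting: the paper first retrivializes each $E_n$ along a maximal tree $T\subset X$ and chooses the edge-based presentation $L=\{g_{v,w}\}$, so that $\rho_n(g_{(v_k,v_j)})$ is \emph{exactly} $\Psi_{n,jk}(\tfrac12(v_j+v_k))$ and the final estimate reduces directly to the $\epsilon_n$-flatness bound on $\diam(\im\Psi_{n,jk})$. You instead leave the presentation arbitrary and invoke Theorem~\ref{thm:transport along contractible loops} for the comparison; this works too (and is morally what Proposition~\ref{prop:associated asymptotic representations for different presentations} packages), but the tree normalization makes the ``$U(F_n')$ and $V$ cancel'' step an exact equality of norms rather than an approximate one, so your appeal to Proposition~\ref{prop:properties of U(F)} at that point is unnecessary once you adopt the paper's normalization.
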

\begin{proof}
  We begin with a few simplifications. Denote by $\Phi_{n,v}\colon S_v\times W_n\to E_n|_{S_v}$ the local trivializations of
  $E_n$. Choose a maximal tree $T\subset X$. For every vertex $V\in V_X$ of $X$ let $\Gamma_v$ be the unique simple simplicial
  path connecting the base vertex $v_0$ to $v$ inside $T$, and put $\Phi_{n,v}'(x,\xi)=\Phi_{n,v}(x,T_{\Gamma_v}\xi)$. Then a
  straightforward calculation shows that replacing $\Phi$ by $\Phi'$ does not change the asymptotic Fredholm representation
  $(W_n,\rho_n,\hat F_n)_{n\in\N}$, but all transport operators along paths in $T$ are equal to the identity. Therefore, we may
  assume without loss of generality that parallel transport in $T$ is trivial.

  As a second simplification, note that we may choose a concrete presentation of the fundamental group and of representing loops
  in the definition of $(W_n,\rho_n,\hat F_n)_{n\in\N}$ because these changes leave the asymptotic equivalence class of
  $(W_n,\rho_n,\hat F_n)_{n\in\N}$ invariant by Proposition~\ref{prop:associated asymptotic representations for different
  presentations}, and asymptotically equivalent asymptotic Fredholm representations have the same asymptotic index.

  We will use the following presentation of $G=\pi_1(X;v_0)$: For any two vertices $v,w\in X$ such that $[v,w]$ is an edge
  in $X$ we associate the simplicial loop $\Gamma_{v,w}=\bar\Gamma_w*(v,w)*\Gamma_v$ at $v_0$ and put
  $g_{v,w}=[\Gamma_{v,w}]\in\pi_1(X;v_0)$. Then $L=\{g_{v,w}:v,w\}$ is a finite generating set for $\pi_1(X;v_0)$, and it
  follows from the Seifert--van Kampen Theorem that this generating set admits a finite set $R$ of relations.

  Note that
  \begin{equation}
    T_{\Gamma_{(v,w)}}=T_{\bar\Gamma_w}T_{(v,w)}T_{\Gamma_v}=T_{(v,w)}=\Psi_{w,v}\left(\frac 12(v+w)\right)
    \label{eq:transport operators if transport in a tree is trivial}
  \end{equation}
  if $T$ denotes the transport operator in any of the bundles $E_n$ because we assumed that transport in $T$ is trivial.

  Now let us return to the proof of the theorem. Consider the map $\Psi=\Psi_{C(X)\otimes B}\colon\prod_{n\in\N}K_0(C(X)\otimes
  B)\to D(\Sigma,C(X)\otimes B)$. Then by Theorem~\ref{thm:alternative calculation of the index of an almost flat Fredholm
  bundle} the statement of the theorem can be reformulated as
  \begin{equation}
    \Psi\left(\left(\rho_X[\tilde P^{E_n}]\right)_{n\in\N}\right)=\left(\id_{C(X)}\otimes\asind\left((W_n,\rho_n,\hat
    F_n)_{n\in\N}\right)\right)\bullet\Sigma\Phi[M_X].\label{eq:first reformulation of the main theorem}
  \end{equation}
  Recall that $\asind((W_n,\rho_n,\hat F_n)_{n\in\N})=\sigma\bullet[\hat\rho]$ where $\hat\rho=\Sigma^2\rho\otimes\id_{\mathcal
  K}$ and $\rho\colon C^*G\to\mathcal A_\delta Q$ is such that $\rho(\pi(w))=[n\mapsto\tilde\rho_n(w)]$ for all $w\in\Fr(L)$,
  and where $\sigma\in E(Q,\mathcal K_B(H_B))$ is the class associated to the split short exact sequence
  \[
    \begin{tikzpicture}
      \matrix (m) [matrix of math nodes, row sep=3em, column sep=3em, commutative diagrams/every cell]
      {
        0 & \mathcal K_B(H_B) & Q & \mathcal L_B(H_B) & 0. \\
      };
      \path[-stealth, commutative diagrams/.cd, every label] (m-1-1) edge (m-1-2) (m-1-2) edge (m-1-3) (m-1-3) edge (m-1-4)
        (m-1-4) edge (m-1-5);
    \end{tikzpicture}
  \]
  It is straightforward to show that
  $\id_{C(X)}\otimes(\sigma\bullet[\hat\rho])=(\id_{C(X)}\otimes\sigma)\bullet(\id_{C(X)}\otimes[\hat\rho])$, so that the right
  hand side in~\eqref{eq:first reformulation of the main theorem} equals
  $(\id_{C(X)}\otimes\sigma)\bullet(\id_{C(X)}\otimes[\hat\rho])\bullet\Sigma\Phi[M_X]$. Consider the diagram
  \begingroup\scriptsize
  \[
    \begin{tikzpicture}
      \matrix (m) [matrix of math nodes, row sep=4em, column sep=1.9em, commutative diagrams/every cell]
      {
        0 &[-0.5em] \prod_{n\in\N}K_0(C(X)\otimes\mathcal K_B(H_B)) & \prod_{n\in\N}(C(X)\otimes Q) &
        \prod_{n\in\N} K_0(C(X)\otimes\mathcal L_B(H_B)) &[-0.5em] 0 \\
        0 & D(\Sigma,C(X)\otimes\mathcal K_B(H_B)) & D(\Sigma,C(X)\otimes Q) & D(\Sigma,C(X)\otimes\mathcal L_B(H_B)) & 0 \\
      };
      \path[-stealth, commutative diagrams/.cd, every label]
        (m-1-1) edge (m-1-2) (m-1-2) edge (m-1-3) (m-1-3) edge (m-1-4) (m-1-4) edge (m-1-5) edge [bend left=30] (m-1-3)
        (m-2-1) edge (m-2-2) (m-2-2) edge (m-2-3) (m-2-3) edge (m-2-4) (m-2-4) edge (m-2-5) edge [bend left=30] (m-2-3)
        (m-1-2) edge node [left] {$\Psi$} (m-2-2) (m-1-3) edge node [left] {$\Psi$} (m-2-3) (m-1-4) edge node [right] {$\Psi$}
        (m-2-4);
    \end{tikzpicture}
  \]
  \endgroup
  of split short exact sequences of abelian groups. Since $\Psi$ is natural, the diagram commutes. Let $\rho_X'\colon
  D(\Sigma,C(X)\otimes Q)$ be the group homomorphism associated to the bottom sequence, so that $\rho_X'$ is uniquely determined
  by the equation
  \[
    \kappa(\id_{C(X)}\otimes i_{T'})\bullet\rho_X'(\eta)+\kappa(\id_{C(X)}\otimes s_{T'}\pi_{T'})\bullet\eta=\eta
  \]
  for all $\eta\in D(\Sigma,C(X)\otimes Q)$. However, this equation is fulfilled by the map which is given by postcomposition
  with $\id_{C(X)}\otimes\sigma\in E(C(X)\otimes Q,C(X)\otimes\mathcal K_B(H_B))$. Therefore, we must have
  $\rho_X'(\eta)=(\id_{C(X)}\otimes\sigma)\bullet\eta$ for all $\eta\in D(\Sigma,C(X)\otimes Q)$. Now
  Lemma~\ref{lem:homomorphism associated to a split exact sequence of abelian groups} implies that
  \begin{align*}
    \Psi\left(\left(\rho_X[\tilde P^{E_n}]\right)_{n\in\N}\right)
    &=\Psi\circ\left(\prod_{n\in\N}\rho_X\right)\left(\left([\tilde P^{E_n}]\right)_{n\in\N}\right)\\
    &=\rho_X'\circ\Psi\left(\left([\tilde P^{E_n}]\right)_{n\in\N}\right)\\
    &=\left(\id_{C(X)}\otimes\sigma\right)\bullet\Psi\left(\left([\tilde P^{E_n}]\right)_{n\in\N}\right).
  \end{align*}
  Therefore, in order to prove~\eqref{eq:first reformulation of the main theorem} it suffices to prove that
  \begin{equation}
    \Psi\left(\left([\tilde P^{E_n}]\right)_{n\in\N}\right)=\left(\id_{C(X)}\otimes[\hat\rho]\right)\bullet\Sigma\Phi[M_X]\in
    D(\Sigma,C(X)\otimes Q).\label{eq:second reformulation of the main theorem}
  \end{equation}
  The left hand side in~\eqref{eq:second reformulation of the main theorem} is defined to be the class in
  $\bbrack{\Sigma^2\C\otimes\mathcal K,\Sigma^2C(X)\otimes Q\otimes\mathcal K\otimes\mathcal K}_\delta\cong D(\Sigma,C(X)\otimes
  Q)$ of the discrete asymptotic homomorphism $\Sigma^2g\otimes\id_{\mathcal K}$ where $g\colon\C\to\mathcal
  A_\delta(C(X)\otimes Q\otimes\mathcal K)$ is determined by $g(1)=[n\mapsto\tilde P^{E_n}]$.

  On the other hand, the class $\Sigma\Phi[M_X]\in\bbrack{\Sigma^2\C\otimes\mathcal K,\Sigma^2C(X)\otimes C^*G\otimes\mathcal
  K\otimes\mathcal K}\cong E(\Sigma,\Sigma C(X)\otimes C^*G)$ is given by the class $\kappa(\Sigma^2f\otimes\id_{\mathcal K})$
  where $f\colon\C\to C(X)\otimes C^*G\otimes\mathcal K$ is determined by $f(1)=P^{M_X}$ where $P^{M_X}$ is a projection in a
  matrix algebra over $C(X)\otimes C^*G$ whose image is isomorphic to $M_X$.
  In particular, equation~\eqref{eq:postcomposition with an asymptotic homomorphism} on page~\pageref{eq:postcomposition with an
  asymptotic homomorphism} implies
  that the right hand side in~\eqref{eq:second reformulation of the main theorem} is given by the class in $E(\Sigma,C(X)\otimes
  B\otimes\mathcal K)$ of the discrete asymptotic homomorphism $(\id_{C(X)}\otimes\hat\rho\otimes\id_{\mathcal
  K})\circ(\Sigma^2f\otimes\id_{\mathcal K})=\Sigma^2((\id_{C(X)}\otimes\rho\otimes\id_{\mathcal K})\circ f)\otimes
  \id_{\mathcal K}\colon\Sigma^2\C\otimes\mathcal K\to\mathcal A_\delta(\Sigma^2C(X)\otimes Q\otimes\mathcal K)$.

  Therefore, in order to prove~\eqref{eq:second reformulation of the main theorem} it suffices to show that $g$ and
  $(\id_{C(X)}\otimes\rho\otimes\id_{\mathcal K})\circ f\colon\C\to\mathcal A_\delta(C(X)\otimes Q\otimes\mathcal K)$ are equal,
  or in other words that
  \begin{equation}
    \id_{C(X)}\otimes\rho\otimes\id_{\mathcal K}(P^{M_X})=[n\mapsto\tilde P^{E_n}]\in\mathcal A_\delta(C(X)\otimes
    Q\otimes\mathcal K).\label{eq:third reformulation of the main theorem}
  \end{equation}

  Let $V_X=\{v_1,\ldots,v_n\}$ be the ordering of vertices of $X$ which goes into the definition of $\tilde P^{E_n}$. It is
  easy to see that one can take
  \[
    P^{M_X}(x,-)=\left(\sqrt{\lambda_j(x)\lambda_k(x)}g_{(v_k,v_j)}\right)_{j,k}\in M_n(\mathcal L_B(C^*G)),
  \]
  as the definition of $P^{M_X}$, so that
  \[
    \id_{C(X)}\otimes\rho\otimes\id_{\mathcal K}(P^{M_X})
    =\left[n\mapsto\left(x\mapsto\left(\sqrt{\lambda_j(x)\lambda_k(x)}\tilde\rho_n(g_{(v_k,v_j)})\right)_{j,k}\right)\right].
  \]
  On the other hand, the definition of $\tilde P^{E_n}$ is
  \[
    \tilde P^{E_n}(x,-)=\left(\sqrt{\lambda_j(x)\lambda_k(x)}(V\Psi_{n,jk}''(x)V^*)\right)_{j,k}.
  \]
  Therefore, we have to prove that $\lim_{n\to\infty}\|\tilde\rho_n(g_{(v_k,v_j)})-V\Psi_{n,jk}''(x)V^*\|=0$ uniformly in $x\in
  S_j\cap S_k$ for all $j,k$. However, a straightforward calculation using the definition of $\tilde\rho_n$ and $\Psi_{n,jk}''$
  shows that
  \[
    \|\tilde\rho_n(g_{(v_k,v_j)})-V\Psi_{n,jk}''(x)V^*\|=\|\rho_n(g_{(v_k,v_j)})-\Psi_{n,jk}(x)\|.
  \]
  By the definition of $\rho_n$ and by~\eqref{eq:transport operators if transport in a tree is trivial} we have
  $\rho_n(g_{(v_k,v_j)})=\Psi_{n,jk}(\frac 12(v_j+v_k))$. Since $E_n$ is $\epsilon_n$-flat, we have
  \[
    \|\tilde\rho_n(g_{(v_k,v_j)})-V\Psi_{n,jk}''(x)V^*\|\leq\epsilon_n\to 0.
  \]
  This completes the proof of~\eqref{eq:third reformulation of the main theorem}.
\end{proof}

\section{The Strong Novikov Conjecture}

In the last two sections of this paper, we will give two applications of Theorem~\ref{thm:main theorem}. In this section, we
will show how to use the theorem in order to prove special cases of the Strong Novikov Conjecture. We fix a finite connected
simplicial complex $X$.

One can define the \emph{K-homology groups} of a compact space $X$ by $K_{j}(X)=E(C(X),\Sigma^j\C)$ for $j\in\N$. Bott
periodicity implies that $K_j(X)\cong K_{j+2}(X)$ for all $j$, so that one actually only needs to consider $K_0(X)=E(C(X),\C)$
and $K_1(X)=E(C(X),\Sigma)$.

Recall from \cite[Section~4]{higson-bivariant-k-theory-and-the-novikov-conjecture} that the \emph{(analytic) assembly map}
$\mu_X\colon K_*(X)\to K_*(C^*\pi_1(X;v_0))$ for the complex $X$ is defined by the equation
\[
  \Phi(\mu_X(\eta))=(\id_{C^*\pi_1(X;v_0)}\otimes\eta)\bullet\Phi[M_X]
\]
where the maps $\Phi$ are the natural isomorphisms from Theorem~\ref{thm:calculation of E(C,B)} and where $[M_X]\in
K_0(C^*\pi_1(X;v_0)\otimes C(X))$ is the class of the Mishchenko bundle.\footnote{The original definition of the analytic
  assembly map, in terms of KK-theory, is due to Kasparov
  \cite[Definition~9.2]{kasparov-k-theory-group-C*-algebras-higher-signatures-conspectus}. However, the two definitions agree by
\cite[Section~4]{higson-bivariant-k-theory-and-the-novikov-conjecture}.} The \emph{Strong Novikov Conjecture} states that
$\mu_X\otimes\Q\colon K_*(X)\otimes\Q\to K_*(C^*\pi_1(X;v_0))\otimes\Q$ is injective whenever $X\simeq B\pi_1(X;v_0)$ is a
classifying space for its fundamental group. The main theorem of this section will show how one can prove the Strong Novikov
Conjecture in the presence of sufficiently many almost flat Fredholm bundles.

Recall that the \emph{Kronecker pairing} relating K-homology and K-theory is defined as follows: If $\eta\in K_j(X)$ and $\xi\in
K^0(X;B)=K_0(C(X)\otimes B)$ are arbitrary then we define $\langle\eta,\xi\rangle\in K_j(B)$ by
\[
  \langle\eta,\xi\rangle=(\eta\otimes\id_B)\bullet\Phi(\xi)\in E(\C,\Sigma^jB)\cong K_j(B).
\]

\begin{prop}\label{prop:calculation of asymptotic Kronecker pairings}
  Let $X$ be a finite connected simplicial complex, let $B$ be a unital C*-algebra, and let $(E_n,F_n)_{n\in\N}$ be an
  asymptotically flat Fredholm bundle over $X$, with underlying C*-algebra $B$. Let $(W_n,\rho_n,\hat F_n)_{n\in\N}$ be the
  assoiated asymptotic Fredholm representation.

  If $\eta\in K_0(X)=E(C(X),\C)$ then
  \[
    \Psi\left((\langle\eta,\ind F_n\rangle)_{n\in\N}\right)=\asind\left((W_n,\rho_n,\hat
    F_n)_{n\in\N}\right)\bullet\Sigma\Phi(\mu_X(\eta))\in D(\Sigma,B)
  \]
  where $\Psi$ is as in Theorem~\ref{thm:calculation of D(Sigma,B)}. Similarly, if $\eta\in K_1(X)=E(C(X),\Sigma)$ then
  \[
    \Psi\left((\langle\eta,\ind F_n\rangle)_{n\in\N}\right)=\Sigma\left(\asind\left((W_n,\rho_n,\hat
    F_n)_{n\in\N}\right)\right)\bullet\Sigma\Phi(\mu_X(\eta))\in D(\Sigma,\Sigma B).
  \]
\end{prop}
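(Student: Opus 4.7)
The plan is to derive both identities by composing the equality of Theorem~\ref{thm:main theorem} on the right with the E-theory class $\eta\otimes\id_B\in E(C(X)\otimes B,\Sigma^j B)$, and then repackaging both sides.

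First I would establish a naturality statement for $\Psi$ in its second argument, namely
\[
(\eta\otimes\id_B)\bullet\Psi_{C(X)\otimes B}\bigl(([\ind F_n])_{n\in\N}\bigr)=\Psi_{\Sigma^j B}\bigl((\langle\eta,\ind F_n\rangle)_{n\in\N}\bigr).
\]
Unwinding the definition of $\Psi$, the left-hand side is the class of a discrete asymptotic homomorphism built from $n\mapsto\Phi(\ind F_n)$; composing with $\eta\otimes\id_B$ and invoking equation~\eqref{eq:postcomposition with an asymptotic homomorphism} pointwise replaces $\Phi(\ind F_n)$ by $(\eta\otimes\id_B)\bullet\Phi(\ind F_n)=\Phi(\langle\eta,\ind F_n\rangle)$, which is precisely the sequence defining the right-hand side (using the identification $K_0(\Sigma B)\cong K_1(B)$ in the $j=1$ case).

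Next I would simplify the right-hand side of Theorem~\ref{thm:main theorem} after composing with $(\eta\otimes\id_B)$. Two applications of the interchange law for the bilinear tensor product on E-/D-theory give
\[
(\eta\otimes\id_B)\bullet(\id_{C(X)}\otimes\asind)=\eta\otimes\asind=(\id_{\Sigma^j}\otimes\asind)\bullet(\eta\otimes\id_{\Sigma C^*G}).
\]
Recognising $\id_\C\otimes\asind=\asind$ when $j=0$ and $\id_\Sigma\otimes\asind=\Sigma\asind$ when $j=1$, we obtain
\[
(\eta\otimes\id_B)\bullet(\id_{C(X)}\otimes\asind)\bullet\Sigma\Phi[M_X]=(\Sigma^j\asind)\bullet\bigl((\eta\otimes\id_{\Sigma C^*G})\bullet\Sigma\Phi[M_X]\bigr).
\]

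Finally I would identify the inner bracket with $\Sigma\Phi(\mu_X\eta)$. Since the suspension isomorphism respects composition and tensor products, this reduces to showing $(\eta\otimes\id_{C^*G})\bullet\Phi[M_X]=\Phi(\mu_X\eta)$, which is the defining equation for $\mu_X$ modulo the symmetry isomorphism $C(X)\otimes C^*G\cong C^*G\otimes C(X)$; that flip is natural in the symmetric monoidal framework of E-theory and so commutes with everything in sight. Combining the three steps yields the proposition.

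The main obstacle is bookkeeping rather than any conceptual difficulty: one must carefully track tensor orderings and the interactions of $\Sigma$ with tensor products and compositions, especially in the $j=1$ case where the extra $\Sigma$ is shuttled between the asymptotic-index side (becoming $\Sigma\asind$) and the Mishchenko-bundle side (becoming the outer $\Sigma$ in $\Sigma\Phi(\mu_X\eta)$). Once this formalism is fixed, the computation itself is a short chain of interchange-law identities together with applications of equation~\eqref{eq:postcomposition with an asymptotic homomorphism}.
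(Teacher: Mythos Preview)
Your proposal is correct and follows essentially the same route as the paper. The paper packages the argument as a large commutative diagram in $E$- and $D$-theory and reads off the equality by chasing $\eta$ around it, whereas you write the same computation as a linear chain of identities; in both cases the ingredients are Theorem~\ref{thm:main theorem}, the naturality of $\Psi$ under postcomposition (which the paper cites from \cite[Corollary~3.9.4]{hunger-asymptotically-flat-fredholm-bundles} and you sketch via equation~\eqref{eq:postcomposition with an asymptotic homomorphism}), the interchange law for $\otimes$ and $\bullet$, and the defining equation for $\mu_X$.
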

\begin{proof}
  In the case $\eta\in K_0(X)$ we consider the commuting diagram
  \begingroup\small
  \[
    \begin{tikzpicture}
      \matrix (m) [matrix of math nodes, row sep=3em, column sep=3em, commutative diagrams/every cell]
      {
        E(C(X),\C) &[2.5em] E(C^*G\otimes C(X),C^*G) & E(\C,C^*G) \\
         & E(\Sigma C^*G\otimes C(X),\Sigma C^*G) & E(\Sigma,\Sigma C^*G) \\
         E(B\otimes C(X),B) & D(\Sigma C^*G\otimes C(X),B) & D(\Sigma,B) \\
      };
      \path[-stealth, commutative diagrams/.cd, every label] (m-1-1) edge (m-1-2) edge (m-3-1)
        (m-1-2) edge node [above] {$\Phi[M_X]$} (m-1-3) edge (m-2-2) (m-1-3) edge (m-2-3)
        (m-2-2) edge node [above] {$\Sigma\Phi[M_X]$} (m-2-3) edge node [left] {$\asind$} (m-3-2)
        (m-2-3) edge node [left] {$\asind$} (m-3-3)
        (m-3-1) edge node [above] {$\asind\otimes\id_{C(X)}$} (m-3-2)
        (m-3-2) edge node [above] {$\Sigma\Phi[M_X]$} (m-3-3);
    \end{tikzpicture}
  \]
  \endgroup
  where the unlabeled arrows are tensor products with the corresponding identities, and the labeled arrows are composition
  products with the respective elements. Of course, here $\asind=\asind((W_n,\rho_n,\hat F_n)_{n\in\N})$ is the asymptotic index
  of the asymptotic Fredholm representation associated to the asymptotically flat Fredholm bundle $(E_n,F_n)_{n\in\N}$.

  The assembly map is the composition along the top row, under the identifications $\Phi\colon K_0(X)\to
  E(C(X),\C)$ and $\Phi\colon K_0(C^*G)\to E(\C,C^*G)$. By associativity of the composition product,
  the composition along the bottom row is given by precomposition with the element
  \[
    \left(\asind\left((W_n,\rho_n,\hat F_n)_{n\in\N}\right)\otimes\id_{C(X)})\right)\bullet\Sigma\Phi[M_X]
    \in D(\Sigma,B\otimes C(X)),
  \]
  which equals $\Psi[\left(\ind F_n\right)_{n\in\N}]$ by Theorem~\ref{thm:main theorem}.
  Now one can show \cite[Corollary~3.9.4]{hunger-asymptotically-flat-fredholm-bundles} that $\eta\in E(C(X),\C)$ is mapped to
  \[
    \left(\id_B\otimes\eta\right)\bullet\Psi\left[(\ind F_n)_{n\in\N}\right]
    =\Psi\left[\left(\langle\eta,\ind F_n\rangle\right)_{n\in\N}\right]\in D(\Sigma,B)
  \]
  under the composition along the left and bottom arrows. Thus, commutativity of the diagram completes the proof in the case
  $\eta\in K_0(X)$.

  In the case where $\eta\in K_1(X)=E(C(X),\Sigma)$, one can carry out the same argument with the diagram
  \begingroup\small
  \[
    \begin{tikzpicture}
      \matrix (m) [matrix of math nodes, row sep=3em, column sep=3em, commutative diagrams/every cell]
      {
        E(C(X),\Sigma) &[2.5em] E(C^*G\otimes C(X),\Sigma C^*G) & E(\C,\Sigma C^*G) \\
         & E(\Sigma C^*G\otimes C(X),\Sigma^2C^*G) & E(\Sigma,\Sigma^2C^*G) \\
         E(B\otimes C(X),\Sigma B) & D(\Sigma C^*G\otimes C(X),\Sigma B) & D(\Sigma,\Sigma B) \\
      };
      \path[-stealth, commutative diagrams/.cd, every label] (m-1-1) edge (m-1-2) edge (m-3-1)
        (m-1-2) edge node [above] {$\Phi[M_X]$} (m-1-3) edge (m-2-2) (m-1-3) edge (m-2-3)
        (m-2-2) edge node [above] {$\Sigma\Phi[M_X]$} (m-2-3) edge node [left] {$\Sigma(\asind)$} (m-3-2)
        (m-2-3) edge node [left] {$\Sigma(\asind)$} (m-3-3)
        (m-3-1) edge node [above] {$\asind\otimes\kappa(\id_{C(X)})$} (m-3-2)
        (m-3-2) edge node [above] {$\Sigma\Phi[M_X]$} (m-3-3);
    \end{tikzpicture}
  \]
  \endgroup
  instead of the diagram for $K_0(X)$.
\end{proof}

Our application to the Strong Novikov Conjecture is the following.

\begin{thm}\label{thm:application SNC}
  Consider a finite connected simplicial complex $X$ and a K-homology class $\eta\in K_*(X)$. Assume that for each $\epsilon>0$
  there exists an $\epsilon$-flat Fredholm bundle $(E,F_E)$ over $X$, with arbitrary underlying unital C*-algebra $B_\epsilon$,
  such that $\langle\eta,\ind F_E\rangle\neq 0$. Then the image of $\eta$ under the analytic assembly map $\mu_X\colon K_*(X)\to
  K_*(C^*\pi_1(X;v_0))$ is nonzero.
\end{thm}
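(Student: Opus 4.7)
The plan is to translate the hypothesis into a statement about an asymptotically flat Fredholm bundle with a single underlying C*-algebra, and then combine Proposition~\ref{prop:calculation of asymptotic Kronecker pairings} with the description of $D(\Sigma,B)$ in Theorem~\ref{thm:calculation of D(Sigma,B)} to derive a contradiction from $\mu_X(\eta)=0$.

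First, I would reduce to the setting of a single underlying coefficient algebra. The hypothesis provides a sequence $(E_n,F_{E_n})_{n\in\N}$ of $\epsilon_n$-flat Fredholm bundles over $X$ with $\epsilon_n\to 0$, respective underlying unital C*-algebras $B_n$, and $\langle\eta,\ind F_{E_n}\rangle\neq 0\in K_*(B_n)$ for every $n$. Put $B=\prod_{n\in\N}B_n$ and consider, for each $n$, the non-unital inclusion $\iota_n\colon B_n\hookrightarrow B$ into the $n$-th factor together with the unital projection $\pi_n\colon B\to B_n$. Since $\pi_n\circ\iota_n=\id_{B_n}$, the induced map $(\iota_n)_*\colon K_*(B_n)\to K_*(B)$ is injective. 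Extending scalars along $\iota_n$ yields $\epsilon_n$-flat Fredholm bundles $\tilde E_n$ over $X$ with underlying algebra $B$; naturality of the Fredholm index and of the Kronecker pairing under change of scalars gives $\langle\eta,\ind F_{\tilde E_n}\rangle=(\iota_n)_*\langle\eta,\ind F_{E_n}\rangle\neq 0$ for every $n$. Hence $(\tilde E_n,F_{\tilde E_n})_{n\in\N}$ is an asymptotically flat Fredholm bundle over $X$ with underlying unital C*-algebra $B$ and with all Kronecker pairings against $\eta$ nonzero.

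Let $(W_n,\rho_n,\hat F_n)_{n\in\N}$ denote the associated asymptotic Fredholm representation. Assume towards a contradiction that $\mu_X(\eta)=0$, so that $\Sigma\Phi(\mu_X(\eta))=0$. If $\eta\in K_0(X)$, Proposition~\ref{prop:calculation of asymptotic Kronecker pairings} yields
\[
  \Psi\left((\langle\eta,\ind F_{\tilde E_n}\rangle)_{n\in\N}\right)
  =\asind\left((W_n,\rho_n,\hat F_n)_{n\in\N}\right)\bullet\Sigma\Phi(\mu_X(\eta))=0
\]
in $D(\Sigma,B)$; the case $\eta\in K_1(X)$ is analogous and takes place in $D(\Sigma,\Sigma B)$. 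By Theorem~\ref{thm:calculation of D(Sigma,B)}, the kernel of $\Psi$ equals $\bigoplus_{n\in\N}K_0(B)$ (respectively $\bigoplus_{n\in\N}K_1(B)$), so the sequence $(\langle\eta,\ind F_{\tilde E_n}\rangle)_{n\in\N}$ could have only finitely many nonzero entries, contradicting what was established in the preceding paragraph.

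The main obstacle is the reduction step. One must verify that scalar extension preserves $\epsilon$-flatness---which is essentially automatic, since the transition functions and the Fredholm operators are only viewed inside a larger operator algebra and the diameter bounds on their images therefore do not worsen---and one must ensure that the Kronecker pairings remain nonzero after extending scalars, which is where the splitting $\pi_n\circ\iota_n=\id_{B_n}$ is essential. Once this reduction is in hand, the remainder of the argument is a formal consequence of Proposition~\ref{prop:calculation of asymptotic Kronecker pairings} and Theorem~\ref{thm:calculation of D(Sigma,B)}.
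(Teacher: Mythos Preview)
Your proposal is correct and follows essentially the same route as the paper: reduce to a single coefficient algebra $B=\prod_nB_n$ by extending scalars along the split-injective inclusions $\iota_n$ (the paper isolates this step as a separate lemma on pushforwards of almost flat Fredholm bundles), and then invoke Proposition~\ref{prop:calculation of asymptotic Kronecker pairings} together with the computation of $\ker\Psi$ from Theorem~\ref{thm:calculation of D(Sigma,B)}. The only cosmetic difference is that you phrase the final step as a contradiction from $\mu_X(\eta)=0$, whereas the paper argues the contrapositive directly.
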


\begin{rem}
  The special case of Theorem~\ref{thm:application SNC} where the bundles $E$ are all finitely generated
  projective has been proved and used by Hanke and Schick
  \cite{hanke-schick-enlargeability-and-index-theory-I,hanke-schick-enlargeability-and-index-theory-infinite,
  hanke-schick-novikov-low-degree-cohomology,hanke-positive-scalar-curvature}. Note that in this case $F_E$ does not carry any
  information.
\end{rem}

Theorem~\ref{thm:application SNC} follows quite directly from Proposition~\ref{prop:calculation of asymptotic Kronecker
pairings} if the C*-algebras $B_\epsilon$ appearing in the statement of theorem are all equal to a single unital C*-algebra $B$.
Therefore, the missing step towards the proof of Theorem~\ref{thm:application SNC} is provided by the following lemma.

\begin{lem}\label{lem:pushforward of almost flat Fredholm bundles}
  Let $f\colon A\to B$ be a (not necessarily unital) *-ho\-mo\-mor\-phism between unital C*-algebras. Let $(E,F_E)$ be an
  $\epsilon$-flat Fredholm bundle over $X$, with underlying unital C*-algebra $A$. Then there exists an $\epsilon$-flat Fredholm
  bundle $(f_*E,f_*F_E)$ over $X$, with underlying unital C*-algebra $B$, such that
  \[
    \ind(f_*F_E)=(\id_{C(X)}\otimes f)_*\ind F_E\in K_0(C(X)\otimes B).
  \]
\end{lem}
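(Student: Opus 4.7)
The plan is to construct $(f_*E, f_*F_E)$ fiberwise via the interior tensor product for Hilbert C*-modules. Let $W$ denote the typical fiber of $E$, a countably generated Hilbert $A$-module, and regard $B$ as a left $A$-module via $f$ (even though $f$ is not assumed unital). Setting $f_*W = W \otimes_f B$ with inner product $\langle w \otimes b, w' \otimes b'\rangle = b^* f(\langle w, w'\rangle_A) b'$ yields a countably generated Hilbert $B$-module, and the assignment $T \mapsto T \otimes_A \id_B$ is a $*$-homomorphism $\mathcal{L}_A(W) \to \mathcal{L}_B(f_*W)$ which carries $\mathcal{K}_A(W)$ into $\mathcal{K}_B(f_*W)$. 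Applying this pushforward to the transition cocycle and local trivializations of $E$ produces a graded Hilbert $B$-module bundle $f_*E \to X$, and applying it to the local representatives $F_v$ of $F_E$ defines $f_*F_E$. The $\epsilon$-flatness is preserved because $*$-homomorphisms are contractive and send unitaries to unitaries; the oddness, self-adjointness, and fiberwise near-compactness of $F_v$ are preserved because $*$-homomorphisms respect adjoints and grading and map compacts to compacts. Hence $(f_*E, f_*F_E)$ is an $\epsilon$-flat Fredholm bundle with underlying algebra $B$.

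For the index identity, let $\tilde f = \id_{C(X)} \otimes f \colon C(X;A) \to C(X;B)$. Standard KK-functoriality provides a group homomorphism $\tilde f_* \colon KK(C(X;A)) \to KK(C(X;B))$, and naturality of the index isomorphism of Theorem~\ref{thm:ind=ind'} (which is immediate from its construction, since both $KK(-)$ and $K_0(-)$ are functors) yields $\ind(\tilde f_*[\hat E]) = \tilde f_*(\ind[\hat E]) = (\id_{C(X)} \otimes f)_* \ind F_E$. It thus suffices to identify $\tilde f_*[\hat E]$ with the Kasparov module $[\widehat{f_*E}] = [(\Gamma(f_*E), \id, (f_*F_E)_*)]$. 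By definition, $\tilde f_*[\hat E]$ is represented by $(\Gamma(E) \otimes_{\tilde f} C(X;B), \id, F_{E,*} \otimes \id)$, and a partition-of-unity argument reducing to trivial bundles furnishes a canonical isometric $C(X;B)$-linear isomorphism $\Gamma(E) \otimes_{\tilde f} C(X;B) \cong \Gamma(f_*E)$ which intertwines $F_{E,*} \otimes \id$ with $(f_*F_E)_*$.

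I expect the main obstacle to be the last identification, which is conceptually transparent but mildly technical in view of the non-unitality of $f$: since $f(1_A)$ need not equal $1_B$, sections of $f_*E$ effectively take values in a compressed sub-module, and the reduction to trivial bundles must handle this carefully so that transition data match on overlaps. If this proves cumbersome, one can sidestep the Kasparov machinery entirely by invoking the classical kernel/cokernel description of the index: compactly perturb the off-diagonal part of $F_E$ so that its kernel and cokernel form finitely generated projective bundles $E_p, E_q \to X$ with $\ind F_E = [E_p] - [E_q]$, observe that $f_*$ sends finitely generated projective Hilbert $A$-module bundles to finitely generated projective Hilbert $B$-module bundles with induced $K_0$-class equal to $\tilde f_*$, and conclude $\ind(f_*F_E) = [f_*E_p] - [f_*E_q] = \tilde f_*(\ind F_E)$.
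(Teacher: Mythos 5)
Your proposal follows essentially the same route as the paper's proof: construct $f_*E$ fiberwise via the interior tensor product $f_*W = W\otimes_f B$, define $f_*F_E$ by $F_E\otimes\id_B$, check that the $\epsilon$-flatness and Fredholm conditions are preserved because the pushforward $T\mapsto T\otimes\id_B$ is a $*$-homomorphism carrying $\mathcal K_A(W)$ into $\mathcal K_B(f_*W)$, and then identify the Kasparov $C(X;B)$-module $(\Gamma(f_*E),\id,(f_*F_E)_*)$ with $(\Gamma(E)\otimes_{\id\otimes f}C(X;B),\id,(F_E)_*\otimes\id)$ to transfer the index calculation through KK-functoriality. The paper, like your write-up, leaves the unitary identification of the two Kasparov modules without further elaboration; the non-unitality concern you raise does not obstruct the construction since $B$ is unital, so $\mathcal K_B(B)=B$ and the interior tensor product behaves exactly as needed.
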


Before we prove the lemma, we show how it can be used to derive Theorem~\ref{thm:application SNC}.

\begin{proof}[Proof of Theorem~\ref{thm:application SNC}]
  By the assumptions, there exists a sequence $(E_n,F_n)_{n\in\N}$ of $\epsilon_n$-flat Fredholm bundles over $X$, with
  underlying C*-algebras $B_n$, such that $\langle\eta,\ind F_n\rangle\neq 0\in K_*(B_n)$ for all $n\in\N$. Let $\iota_n\colon
  B_n\to\prod_{n\in\N}B_n=B$ be the inclusions. Then $\ind((\iota_n)_*F_n)=(\id_{C(X)}\otimes\iota_n)_*\ind F_n$ by
  Lemma~\ref{lem:pushforward of almost flat Fredholm bundles}, so that
  \[
    \langle\eta,\ind((\iota_n)_*F_n)\rangle=\langle\eta,(\id_{C(X)}\otimes\iota_n)_*\ind F_n\rangle=(\iota_n)_*\langle\eta,\ind
    F_n\rangle\in K_*(B).
  \]
  These elements of $K_*(B)$ are nonzero because each $\iota_n\colon B_n\to B$ is split injective (a splitting is given by the
  projection $B\to B_n$), so that $(\iota_n)_*\colon K_*(B_n)\to K_*(B)$ is injective. Therefore, we may replace the C*-algebras
  $B_n$ by the single C*-algebra $B$. Then $(E_n,F_n)_{n\in\N}$ is an asymptotically
  flat Fredholm bundle, and $\Psi((\langle\eta,\ind F_n\rangle)_{n\in\N})\neq 0$, so that indeed $\mu_X(\eta)\neq 0$ by
  Proposition~\ref{prop:calculation of asymptotic Kronecker pairings}.
\end{proof}

\begin{proof}[Proof of Lemma~\ref{lem:pushforward of almost flat Fredholm bundles}]
  Let $W$ be the typical fiber of $E$. We may consider the Hilbert $B$-module $f_*W=W\otimes_fB$
  \cite[Section~1.2]{jensen-thomsen-elements-kk-theory}. As a set, we put $f_*E=\sqcup_{x\in X}f_*E_x$, and consider the local
  trivializations
  \[
    f_*\Phi_v\colon S_v\times f_*W\to f_*E|_{S_v}
  \]
  which are determined by $f_*\Phi_v(x,\xi\otimes b)=\Phi_v(x,\xi)\otimes b\in f_*(E_x)$ for all $x\in S_v$, $\xi\in W$, and
  $b\in B$. Similarly, we define $f_*F_E\colon f_*E\to f_*E$ by
  \[
    f_*F_E|_{E_x\otimes_fB}=F_E|_{E_x}\otimes_f\id_B\colon E_x\otimes_fB\to E_x\otimes_fB.
  \]
  It is straightforward to check that $(f_*E,f_*F_E)$ is an $\epsilon$-flat Hilbert $B$-module bundle.

  Furthermore, one can show that the Kasparov $C(X;B)$-module
  \[
    (\Gamma(f_*E),\id,(f_*F_E)_*)
  \]
  is unitarily equivalent to $(\Gamma(E)\otimes_{\id\otimes f}C(X;B),\id,(F_E)_*\otimes\id)$, so that indeed
  \begin{align*}
    \ind f_*F_E&=\ind[\Gamma(E)\otimes_{\id\otimes f}C(X;B),\id,(F_E)_*\otimes\id]\\
               &=(\id\otimes f)_*\ind[\Gamma(E),\id,(F_E)_*]=(\id\otimes f)_*\ind F_E
  \end{align*}
  as claimed.
\end{proof}

\begin{rem}
  In \cite[Theorem~3.9]{hanke-positive-scalar-curvature}, Hanke formulated and proved Theorem~\ref{thm:application SNC} in the
  case of finite dimensional Hilbert module bundles. However, it was observed by Mario Listing that his proof needed the
  assumption that the transition functions of the Hilbert module bundles are Lipschitz with small Lipschitz constant.
  However, the proof of Theorem~\ref{thm:application SNC} does not need this assumption, so this clarifies that Theorem~3.9 of
  \cite{hanke-positive-scalar-curvature} is indeed correct under the assumptions stated there.

  Note that a simpler way of clarifying this point is to use Remark~\ref{rem:lipschitz-transition-functions-or-not} which shows
  that one can get transition functions with small Lipschitz constants when provided only with an almost flat bundle in the
  sense of this paper or in the sense of \cite{hanke-positive-scalar-curvature}.
\end{rem}

\section{Dadarlat's index theorem}

In this section, we will show how to use the methods of the foregoing section in order to generalize a theorem of
Dadarlat \cite[Theorem~3.2]{dadarlat-group-quasi-representations-index-theory} We consider the involutive Banach algebra
$\ell^1(G)$ which is the completion of the complex group algebra $\C G$ with respect to the 1-norm $\|\sum_{g\in G}\lambda_g\cdot
g\|=\sum_{g\in G}|\lambda_g|$.

This algebra has a property which makes it a very natural object to work with when studying almost flat bundles. Namely, if
$f\colon G\to V$ is any bounded map into a Banach space $V$, there is a unique extension to a bounded linear operator $\hat
f\colon\ell^1(G)\to V$. In addition, we will consider $\hat f^{(k)}\colon M_k(\ell^1(G))\to M_k(V)$ which is defined by applying
$\hat f$ on every matrix entry.

In particular, suppose that $\rho\colon\Fr(L)\to\mathcal L_B(W)$ is an $\epsilon$-representation of $G$ with respect to some
finite presentation $G=\langle L\mid R\rangle$. Choose a set-theoretic section $s\colon G\to\Fr(L)$ of the projection map
$\Fr(L)\to G$. Since $\rho\colon\Fr(L)\to\mathcal L_B(W)$ is an almost representation, in particular its image is contained in
the set of unitary operators on $W$. Thus, $\|\rho(w)\|\leq 1$ for all $w\in\Fr(L)$, so that $\rho\circ s\colon G\to\mathcal
L_B(W)$ is bounded. Thus, the construction above yields an extension $\hat\rho\colon\ell^1(G)\to\mathcal L_B(W)$ of $\rho\circ
s$, and maps $\hat\rho^{(k)}\colon M_k(\ell^1(G))\to M_k(\mathcal L_B(W))$. We observe the following fact:

\begin{prop}\label{prop:pushforward of projections over l1 along almost representations}
  Let $G=\langle L\mid R\rangle$ be a finitely presented group and let $s\colon G\to\Fr(L)$ be a set-theoretic section of the
  projection map $\pi\colon\Fr(L)\to G$. Let $p\in M_k(\ell^1(G))$ be a projection, and fix $\delta>0$. Then there exists a
  number $\epsilon=\epsilon(L,R,s,p,\delta)>0$ such that for every $\epsilon$-representation $\rho\colon\Fr(L)\to\mathcal
  L_B(W)$ the element $\hat\rho^{(k)}(p)\in M_k(\mathcal L_B(W))$ satisfies $\|\hat\rho^{(k)}(p)^2-\hat\rho^{(k)}(p)\|<\delta$
  and $\|\hat\rho^{(k)}(p)^*-\hat\rho^{(k)}(p)\|<\delta$.\qed
\end{prop}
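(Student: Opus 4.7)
The strategy is to approximate $p$ by a finitely supported element of $M_k(\C G)$ and then to reduce the near-idempotency and near-self-adjointness of $\hat\rho^{(k)}(p)$ to bounding $\|\rho(r)-\id\|$ for finitely many $r\in\ker\pi$. Concretely, I would first pick a finite support $F\subset G$ and a $p_0=\sum_{g\in F}a_g\cdot g\in M_k(\C G)$ with $\|p-p_0\|_1$ so small that $\|p_0^2-p_0\|_1<\delta/6$ and $\|p_0^*-p_0\|_1<\delta/6$; this is possible because $p$ is a genuine self-adjoint idempotent in $M_k(\ell^1(G))$ and the norm in $M_k(\ell^1(G))$ is submultiplicative. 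The key trivial observation is that for every $\epsilon$-representation $\rho$ the map $\hat\rho^{(k)}$ is contractive from $(M_k(\ell^1(G)),\|\cdot\|_1)$ into $M_k(\mathcal L_B(W))$, since each $\rho(s(g))$ is unitary; hence $\|\hat\rho^{(k)}(p)-\hat\rho^{(k)}(p_0)\|<\delta/6$ uniformly in $\rho$.

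The heart of the proof is an almost-multiplicativity/almost-selfadjointness estimate for $\hat\rho^{(k)}\circ s$ on the finite set $F$. For any $g,h\in F$ set $r_{g,h}=s(g)s(h)s(gh)^{-1}\in\ker\pi$, and for each $g\in F$ set $r_g=s(g)s(g^{-1})\in\ker\pi$. Expanding the products gives
\[
  \hat\rho^{(k)}(p_0)^2-\hat\rho^{(k)}(p_0^2)=\sum_{g,h\in F}a_g a_h\bigl(\rho(r_{g,h})-\id\bigr)\rho(s(gh)),
\]
and a short computation (using $uv=\rho(r_g)$ with $u=\rho(s(g))$, $v=\rho(s(g^{-1}))$, so that $v-u^*=u^*(\rho(r_g)-\id)$) gives
\[
  \hat\rho^{(k)}(p_0)^*-\hat\rho^{(k)}(p_0^*)=\sum_{g\in F}a_g^*\bigl(\rho(s(g))^*-\rho(s(g^{-1}))\bigr),
\]
each summand of which is controlled by $\|\rho(r_g)-\id\|$. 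Thus both errors are bounded by $\|p_0\|_1^2\cdot\max_{g,h\in F}\|\rho(r_{g,h})-\id\|$ and $\|p_0\|_1\cdot\max_{g\in F}\|\rho(r_g)-\id\|$ respectively.

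It remains to bound $\|\rho(r)-\id\|$ for $r\in\ker\pi$ in terms of $\epsilon$. This rests on two standard facts for any group homomorphism $\rho\colon\Fr(L)\to U(W)$: first, $\|\rho(ab)-\id\|\leq\|\rho(a)-\id\|+\|\rho(b)-\id\|$ because $\rho(a)$ is isometric; and second, $\|\rho(uru^{-1})-\id\|=\|\rho(r)-\id\|$ for the same reason. Since $\ker\pi$ is normally generated by $R$, every $r\in\ker\pi$ can be written as a product of, say, $N(r)$ conjugates of elements of $R\cup R^{-1}$, and hence $\|\rho(r)-\id\|\leq N(r)\cdot\epsilon$ whenever $\rho$ is an $\epsilon$-representation. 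Applying this to the finite family $\{r_{g,h}:g,h\in F\}\cup\{r_g:g\in F\}$ gives a uniform constant $N=N(L,R,s,F)$ with $\|\rho(r)-\id\|\leq N\epsilon$ for every such $r$.

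Combining the estimates, the choice $\epsilon<\delta/(6N(1+\|p_0\|_1+\|p_0\|_1^2))$ ensures that $\|\hat\rho^{(k)}(p)^2-\hat\rho^{(k)}(p)\|$ and $\|\hat\rho^{(k)}(p)^*-\hat\rho^{(k)}(p)\|$ are each below $\delta$ after summing the triangle-inequality contributions from the approximation $p\rightsquigarrow p_0$, the near-idempotency of $p_0$, and the almost-multiplicativity of $\hat\rho^{(k)}\circ s$. The only subtle point, and the one requiring care in the write-up, is the existence of the constant $N$: one must invoke a fixed decomposition of each $r_{g,h}$ and $r_g$ as a product of conjugates of relators (which depends on $L$, $R$, $s$, and $F$, hence on $p$ through $F$) so that $N$ is finite; everything else is a bookkeeping exercise in the submultiplicativity of the $\ell^1$-norm.
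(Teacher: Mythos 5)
The paper states this proposition with a terminating \qed and gives no proof---it is treated as a straightforward consequence of the definitions---so there is no argument in the paper to compare against. Your argument is correct and is the natural one. The three ingredients---(i) contractivity of $\hat\rho^{(k)}$ on $(M_k(\ell^1(G)),\|\cdot\|_1)$, which reduces the problem to a finitely supported approximant $p_0$; (ii) the identities
\[
  \rho(s(g))\rho(s(h))-\rho(s(gh))=(\rho(r_{g,h})-\id)\rho(s(gh)),\qquad
  \rho(s(g))^*-\rho(s(g^{-1}))=-\rho(s(g))^*(\rho(r_g)-\id),
\]
which isolate finitely many elements of $\ker\pi$; and (iii) the standard estimate $\|\rho(r)-\id\|\leq N(r)\epsilon$ obtained from $\|\rho(ab)-\id\|\leq\|\rho(a)-\id\|+\|\rho(b)-\id\|$, $\|\rho(uru^{-1})-\id\|=\|\rho(r)-\id\|$, and a fixed finite expression of $r$ as a product of conjugates of $R\cup R^{-1}$---are exactly what one needs, and you have them all. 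This matches how the author handles the analogous estimate implicitly elsewhere (e.g.\ the proof of Lemma~\ref{lem:equivalence of asymptotic representations} appeals to the same fact about $\|\rho(r)-\id\|$ for $r\in\ker\pi$), and the dependence of $\epsilon$ on $p$ entering only through the finite support $F$ of $p_0$ is correctly observed.

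Two small bookkeeping points to tidy up in a final write-up. First, you need $\|p-p_0\|_1$ itself small (not just $\|p_0^2-p_0\|_1$ and $\|p_0^*-p_0\|_1$ small), both to get $\|\hat\rho^{(k)}(p)-\hat\rho^{(k)}(p_0)\|<\delta/6$ and to control the term $\|\hat\rho^{(k)}(p)^2-\hat\rho^{(k)}(p_0)^2\|\leq(\|p\|_1+\|p_0\|_1)\|p-p_0\|_1$; this is free since $\|p_0^2-p_0\|_1\leq(\|p\|_1+\|p_0\|_1+1)\|p-p_0\|_1$ and $\|p_0^*-p_0\|_1\leq 2\|p-p_0\|_1$ follow from $\|p-p_0\|_1$ small, but state it explicitly. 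Second, the closing formula $\epsilon<\delta/(6N(1+\|p_0\|_1+\|p_0\|_1^2))$ only controls the almost-multiplicativity term; the approximation terms are controlled by the prior choice of $p_0$ and do not involve $\epsilon$, so the clean way to phrase the conclusion is: first choose $p_0$ so that all $p$-vs-$p_0$ terms are $<\delta/3$, then choose $\epsilon$ so that the almost-multiplicativity/selfadjointness terms are $<\delta/3$. Neither remark affects the substance of your argument.
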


In particular, if $\delta>0$ is small enough then Proposition~\ref{prop:pushforward of projections over l1 along almost
representations} implies that the self-adjoint matrix $\tilde p=\frac 12(\hat\rho^{(k)}(p)+\hat\rho^{(k)}(p)^*)$ satisfies
$\|\tilde p^2-\tilde p\|<\frac 14$. Consider the function $\psi\colon\R-\{\frac 12\}\to\R$ which is constantly equal to $0$ on
$\R_{<1/2}$ and constantly equal to $1$ on $\R_{>1/2}$. Then the element $\psi(\tilde p)\in M_k(\mathcal L_B(W))$, defined by
continuous functional calculus, is a well-defined projection. We put
\[
  \rho_\#(p)=\psi(\tilde p)
\]
in this case.

\begin{lem}\label{lem:pushforward of a projection along an almost Fredholm representation}
  Let $(W,\rho,F)$ be an $\epsilon$-Fredholm representation where $\epsilon>0$ is so small that $\rho_\#(p)$ as above is
  defined. Then $[\rho_\#(p),F\oplus\cdots\oplus F]\in M_k(\mathcal K_B(W))$.
\end{lem}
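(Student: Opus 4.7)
The plan is to first show $[\hat\rho(a),F]\in\mathcal K_B(W)$ for every $a\in\ell^1(G)$ (and analogously for $F^*$), pass to matrices and the self-adjoint part $\tilde p$, and then upgrade to $\psi(\tilde p)=\rho_\#(p)$ via continuous functional calculus.

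For each $g\in L$ the operator $\rho(g)$ is unitary, so the identity $[\rho(g)^*,F]=-\rho(g)^*[\rho(g),F]\rho(g)^*$ together with the assumption $[\rho(g),F]\in\mathcal K_B(W)$ implies that $[\rho(g)^{-1},F]$ is also compact. Iterating the Leibniz rule $[AB,F]=A[B,F]+[A,F]B$ shows that $[\rho(w),F]\in\mathcal K_B(W)$ for every word $w\in\Fr(L)$ in the generators $L\cup L^{-1}$, and in particular for $w=s(g)$ for each $g\in G$. Since $\|\rho(s(g))\|\leq 1$, for every $a=\sum_g\lambda_g\cdot g\in\ell^1(G)$ the sum
\[
  [\hat\rho(a),F]=\sum_{g\in G}\lambda_g\,[\rho(s(g)),F]
\]
converges absolutely in norm, and as a norm-limit of compact operators it is compact. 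The same argument with $F^*$ in place of $F$ yields $[\hat\rho(a),F^*]\in\mathcal K_B(W)$.

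Computing entrywise, the $(i,j)$-entry of $[\hat\rho^{(k)}(p),F\oplus\cdots\oplus F]$ equals $[\hat\rho(p_{ij}),F]$, so $[\hat\rho^{(k)}(p),F\oplus\cdots\oplus F]\in M_k(\mathcal K_B(W))$. Taking the adjoint of $[\hat\rho^{(k)}(p),F^*\oplus\cdots\oplus F^*]$ (which is compact by the parallel argument with $F^*$) and using $[A,B]^*=-[A^*,B^*]$, we also obtain $[\hat\rho^{(k)}(p)^*,F\oplus\cdots\oplus F]\in M_k(\mathcal K_B(W))$. Averaging gives $[\tilde p,F\oplus\cdots\oplus F]\in M_k(\mathcal K_B(W))$.

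Finally, the estimate $\|\tilde p^2-\tilde p\|<\tfrac14$ combined with self-adjointness of $\tilde p$ forces $\Sp(\tilde p)$ into the compact set $\{t\in\R:|t^2-t|\leq\tfrac14\}$, which is a disjoint union of two intervals lying on opposite sides of $1/2$. Hence $\psi$ is continuous (locally constant) on $\Sp(\tilde p)$ and can be approximated uniformly there by polynomials $q_n$ via Stone--Weierstrass. For each $q_n$, the commutator $[q_n(\tilde p),F\oplus\cdots\oplus F]$ is a noncommutative polynomial in $\tilde p$ and the compact operator $[\tilde p,F\oplus\cdots\oplus F]$, hence lies in $M_k(\mathcal K_B(W))$. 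Since $M_k(\mathcal K_B(W))$ is norm-closed, the norm limit $[\psi(\tilde p),F\oplus\cdots\oplus F]=[\rho_\#(p),F\oplus\cdots\oplus F]$ is compact as well. The one mildly delicate point is verifying that $\Sp(\tilde p)$ actually avoids the discontinuity of $\psi$, which is exactly what the estimate $\|\tilde p^2-\tilde p\|<\tfrac14$ built into the definition of $\rho_\#(p)$ guarantees; the conceptual crux, namely the passage from generators to all of $\ell^1(G)$, goes through automatically because the generators act unitarily so $\ell^1$-convergence upgrades to norm convergence in $\mathcal L_B(W)$.
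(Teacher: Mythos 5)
Your proof is correct and follows essentially the same route as the paper's: reduce to showing the entries $\hat\rho(a)$ commute with $F$ modulo compacts (by extending from the generators in $L$ to all of $\Fr(L)$ and then passing to $\ell^1$ limits), form the self-adjoint element $\tilde p$, and then upgrade to $\psi(\tilde p)=\rho_\#(p)$ by functional calculus. The only cosmetic difference is in the last step: the paper observes that $\{A:[A,F\oplus\cdots\oplus F]\in M_k(\mathcal K_B(W))\}$ is a C*-subalgebra (using $F^*-F$ compact to get closure under adjoints), hence closed under continuous functional calculus, whereas you approximate $\psi$ by polynomials via Stone--Weierstrass and pass to the norm limit; these are interchangeable, and your version also makes explicit the Leibniz-rule induction and the $\ell^1$-approximation that the paper compresses into "an approximation argument."
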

\begin{proof}
  Since $F^*-F$ is compact by assumption, the set of all matrices which commute with $F\oplus\cdots\oplus F$ up to $M_k(\mathcal
  K_B(W))$ is a C*-subalgebra of $M_k(\mathcal L_B(W))$, and in particular this set is closed under continuous functional
  calculus. It is thus enough to prove that $[\hat\rho^{(k)}(p),F\oplus\cdots\oplus F]\in M_k(\mathcal K_B(W))$. Equivalently,
  one has to prove that all entries of $\hat\rho^{(k)}(p)$ commute with $F$ up to $\mathcal K_B(W)$. Again, by the definition of
  $\hat\rho^{(k)}$ and by an approximation argument it suffices to prove that $[\hat\rho(g),F]\in\mathcal K_B(W)$ for all $g\in
  G$. However, $\hat\rho(g)=\rho(s(g))$ commutes with $F$ up to compact operators by definition of an almost Fredholm
  representation.
\end{proof}

In particular, consider the Kasparov $B$-module $W^k=W\oplus\cdots\oplus W$. Then Lemma~\ref{lem:pushforward of a projection
along an almost Fredholm representation} shows that $(W^k,\rho_\#(p),F\oplus\cdots\oplus F)$ defines a Kasparov $B$-module and
hence a class in $KK(B)$. We will prove a generalization of
\cite[Theorem~3.2]{dadarlat-group-quasi-representations-index-theory} which states that this construction relates to the pairing
of a K-homology class with an almost flat Fredholm bundle.

Lafforgue \cite{lafforgue-banach-kk-theory,lafforgue-k-theorie-bivariante} introduced the so-called $\ell^1$-assembly map
\[
  \mu_X^{\ell^1}\colon K_0(X)\to K_0(\ell^1(G))
\]
which has the property that the inclusion $i_{\ell^1}\colon\ell^1(G)\to C^*G$ satisfies
\[
  \mu_X=(i_{\ell^1})_*\circ\mu_X^{\ell^1}\colon K_0(X)\to K_0(C^*G).
\]
For a proof of this equation, one may, for instance, replace $C_r^*(G,B)$ by $C^*G$ in
\cite[Proposition~1.7.6]{lafforgue-k-theorie-bivariante}. Now we can state and prove the main theorem of this section.

\begin{thm}\label{thm:generalization of Dadarlat's index theorem}
  Let $\eta\in K_0(X)$ be a K-homology class of a finite connected simplicial complex $X$ with finitely presented fundamental
  group $G=\pi_1(X;v_0)=\langle L\mid R\rangle$. Choose simplicial loops $\Gamma_g$ representing the generators $g\in L$. Let
  $p,q\in M_k(\ell^1(G))$ be projections such that $\mu_X^{\ell^1}(\eta)=[p]-[q]\in K_0(\ell^1(G))$. Then there exists a number
  $\epsilon>0$ such that the following holds:

  Let $(E,F_E)$ be an $\epsilon$-flat Fredholm bundle over $X$, with arbitrary underlying unital C*-algebra $B$, and let
  $(W,\rho,F)$ be the associated almost Fredholm representation. Then $\rho_\#(p)$ and $\rho_\#(q)$ are defined, and
  \[
    \langle\eta,\ind F_E\rangle=\ind\left([W^k,\rho_\#(p),F\oplus\cdots\oplus F]-[W^k,\rho_\#(q),F\oplus\cdots\oplus F]\right)
  \]
  in $K_0(B)$, where $\ind\colon KK(B)\to K_0(B)$ is the index isomorphism.
\end{thm}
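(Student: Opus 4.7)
My plan is to argue by contradiction, in the spirit of the proof of Theorem~\ref{thm:application SNC}. Suppose the conclusion fails for every $\epsilon>0$. Then there is a sequence $\epsilon_n\to 0$ and $\epsilon_n$-flat Fredholm bundles $(E_n,F_n)$ over $X$, with underlying C*-algebras $B_n$, for which the claimed identity fails at every $n$. Pushing forward via Lemma~\ref{lem:pushforward of almost flat Fredholm bundles} along the split inclusions $B_n\hookrightarrow B:=\prod_m B_m$ reduces us to a single unital C*-algebra $B$, so that $(E_n,F_n)_{n\in\N}$ becomes an asymptotically flat Fredholm bundle with associated asymptotic Fredholm representation $(W_n,\rho_n,\hat F_n)_{n\in\N}$. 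Combining Proposition~\ref{prop:calculation of asymptotic Kronecker pairings} with the factorization $\mu_X=(i_{\ell^1})_*\circ\mu_X^{\ell^1}$ and the additivity of $\Sigma\Phi$,
\[
\Psi\bigl((\langle\eta,\ind F_n\rangle)_{n\in\N}\bigr)
=\asind\bullet\Sigma\Phi\bigl((i_{\ell^1})_*[p]\bigr)-\asind\bullet\Sigma\Phi\bigl((i_{\ell^1})_*[q]\bigr)\in D(\Sigma,B),
\]
where I abbreviate $\asind=\asind((W_n,\rho_n,\hat F_n)_{n\in\N})$.

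The crux of the proof is then the identity
\[
\asind\bullet\Sigma\Phi\bigl((i_{\ell^1})_*[p]\bigr)=\Psi\Bigl(\bigl(\ind[W_n^k,\rho_{n,\#}(p),\hat F_n^{\oplus k}]\bigr)_{n\in\N}\Bigr)\quad\text{in }D(\Sigma,B),
\]
and its analogue for $q$. Granted this, subtraction yields that the sequence $(\langle\eta,\ind F_n\rangle-\ind[W_n^k,\rho_{n,\#}(p),\hat F_n^{\oplus k}]+\ind[W_n^k,\rho_{n,\#}(q),\hat F_n^{\oplus k}])_{n\in\N}$ lies in $\ker\Psi=\bigoplus_{n\in\N}K_0(B)$ by Theorem~\ref{thm:calculation of D(Sigma,B)}, so it must vanish for all sufficiently large $n$, contradicting our standing assumption.

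To establish the crux identity I would unfold $\asind=\sigma\bullet[\hat\rho]$ and apply formula~\eqref{eq:postcomposition with an asymptotic homomorphism}: the composition $[\hat\rho]\bullet\Sigma\Phi((i_{\ell^1})_*[p])$ is represented by the discrete asymptotic homomorphism obtained by evaluating $\rho\otimes\id_{M_k}\colon M_k(C^*G)\to M_k(\mathcal A_\delta Q)$ at $(i_{\ell^1})_*(p)$, where $\rho$ is the $*$-homomorphism from Lemma~\ref{lem:definition of rho for the asymptotic index}. Since $\C G$ is dense in $\ell^1(G)$ and each $\tilde\rho_n$ is unitary, a continuity argument identifies $\rho\circ i_{\ell^1}$ with the asymptotic class of the canonical $\ell^1(G)$-extensions of $\tilde\rho_n\circ s$. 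The key point is that $\rho_{n,\#}(p)$ is, by definition, precisely the genuine projection produced from this approximate projection via continuous functional calculus, and post-composition with $\sigma\in E(Q,\mathcal K_B(H_B))$ (the splitting class of $0\to\mathcal K_B(H_B)\to Q\to\mathcal L_B(H_B)\to 0$) converts the resulting sequence into the sequence $(\ind[W_n^k,\rho_{n,\#}(p),\hat F_n^{\oplus k}])_{n\in\N}$ via the description of the index isomorphism from Theorem~\ref{thm:ind=ind'}.

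The principal technical obstacle will be in this final identification: one must carefully reconcile the stabilization isomorphisms $U_n$ and $V$ from the construction of $\asind$---which move everything onto the fixed module $\mathcal H_B$---with the intrinsic Kasparov modules $(W_n^k,\rho_{n,\#}(p),\hat F_n^{\oplus k})$ that live naturally on the original fibers $W_n$. A clean route, modelled on the proof of Theorem~\ref{thm:main theorem}, is first to establish an analogue of Theorem~\ref{thm:alternative calculation of the index of an almost flat Fredholm bundle} expressing $\ind[W_n^k,\rho_{n,\#}(p),\hat F_n^{\oplus k}]$ as $\rho_X$ applied to an explicit projection in $M_k(Q)$ built from the $\ell^1(G)$-extension of $\tilde\rho_n$, and then to thread this through a commutative diagram of the same shape as the one used there. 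All the needed ingredients---the splitting of the short exact sequence, the compatibility of functional calculus with the $\mathcal A_\delta$-quotient, and the invariance of $\asind$ under conjugation by even unitaries---are already developed in the paper, so what remains should be a diagram chase.
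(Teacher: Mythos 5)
Your proposal follows essentially the same route as the paper's proof: contradiction via an asymptotically flat Fredholm bundle with all algebras merged into $B=\prod_m B_m$, reduction via Proposition~\ref{prop:calculation of asymptotic Kronecker pairings} and the factorization $\mu_X=(i_{\ell^1})_*\circ\mu_X^{\ell^1}$, and the identification of the same crux identity $\asind\bullet\Sigma\Phi((i_{\ell^1})_*[p])=\Psi(((W_n,\rho_n,\hat F_n)_\#(p))_{n\in\N})$, proved by unfolding $\asind=\sigma\bullet[\hat\rho]$, re-expressing $(W_n,\rho_n,\hat F_n)_\#(p)$ as the splitting homomorphism applied to a projection in $M_k(Q)$ via Theorem~\ref{thm:ind=ind'}, and matching the two sides through $\ell^1$-continuity and functional calculus in $\mathcal A_\delta Q$. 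The points you flag as remaining technical work (reconciling the stabilizations $U_n,V$ and the analogue of Theorem~\ref{thm:alternative calculation of the index of an almost flat Fredholm bundle}) are exactly the places where the paper defers details to the dissertation, so the plan is sound and aligned with the paper's argument.
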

\begin{proof}
  If $(W,\rho,F)$ is an $\epsilon$-representation of $G$ over a C*-algebra $B$ and $p\in M_k(\ell^1(G))$ is a projection, then
  we abbreviate
  \[
    (W,\rho,F)_\#(p)=\ind[W^k,\rho_\#(p),F\oplus\cdots\oplus F]\in K_0(B),
  \]
  provided that $\epsilon$ is small enough such that the right hand side is defined. The proof of the theorem proceeds by
  contradiction. Thus, we assume that there is an asymptotically flat Fredholm bundle $(E_n,F_n)_{n\in\N}$ over $X$ with
  associated asymptotic Fredholm representation $(W_n,\rho_n,\hat F_n)_{n\in\N}$ such that
  \begin{equation}
    \langle\eta,\ind F_n\rangle\neq(W_n,\rho_n,\hat F_n)_\#(p)-(W_n,\rho_n,\hat F_n)_\#(q)\in K_0(B)
    \label{eq:contradicting equation for Dadarlat's theorem}
  \end{equation}
  for all $n\in\N$, where each $E_n$ is a Hilbert $B$-module bundle for a unital C*-algebra $B$.\footnote{A priori, each $E_n$
    is a Hilbert $B_n$-module bundle where $B_n$ depends on $n$. However, as in the proof of Theorem~\ref{thm:application SNC}
  we may replace each $B_n$ by $B=\prod_{n\in\N}B_n$.} By Proposition~\ref{prop:calculation of asymptotic Kronecker pairings} we
  have
  \[
    \Psi\left[\left(\langle\eta,\ind F_n\rangle\right)_{n\in\N}\right]=\asind\left((W_n,\rho_n,\hat
    F_n)_{n\in\N}\right)\bullet\Sigma\Phi(\mu_X(\eta)).
  \]
  Of course, $\mu_X(\eta)=(i_\ell^1)_*\mu_X^{\ell^1}(\eta)=(i_{\ell^1})_*[p]-(i_{\ell^1})_*[q]$. Therefore we get a
  contradiction to~\eqref{eq:contradicting equation for Dadarlat's theorem} if we can prove that
  \[
    \asind\left((W_n,\rho_n,\hat F_n)_{n\in\N}\right)\bullet\Sigma\Phi\left((i_{\ell^1})_*[p]-(i_{\ell^1})_*[q]\right)
  \]
  and
  \[
    \Psi\left(\left((W_n,\rho_n,\hat F_n)_\#(p)-(W_n,\rho_n,\hat F_n)_\#(q)\right)_{n\in\N}\right)
  \]
  define the same element of $D(\Sigma,B)$. We will actually prove that
  \begin{equation}
    \asind\left((W_n,\rho_n,\hat
      F_n)_{n\in\N}\right)\bullet\Sigma\Phi\left((i_{\ell^1})_*[p]\right)=\Psi\left(\left((W_n,\rho_n,\hat
    F_n)_\#(p)\right)_{n\in\N}\right)\label{eq:first reformulation of Dadarlat's theorem}
  \end{equation}
  for every projection $p\in M_k(\ell^1(G))$.

  Let us first calculate the right hand side of~\eqref{eq:first reformulation of the main theorem}.
  In order to do this, we need to analyze the classes $(W_n,\rho_n,\hat
  F_n)_\#(p)\in K_0(B)$. As in the definition of the asymptotic index, we choose even isometric isomorphisms $U_n\colon
  W_n\oplus\mathcal H_B\to\mathcal H_B$ and $V\colon\mathcal H_B'\to\mathcal H_B$. We consider $\hat F_n'$, $\rho_n'$,
  $\tilde\rho_n$, and $T'$ as in the definition of the asymptotic index of an asymptotic Fredholm representation.

  We define a *-ho\-mo\-mor\-phism $f\colon\mathcal L_B(W)\to\mathcal L_B(\mathcal H_B)$ by
  \[
    f(F)=VU(\hat F_n')((U_n(F\oplus 0)U_n^*\oplus 0)\oplus 0)U(\hat F_n')^*V^*.
  \]
  Then by definition we have $\tilde\rho_n(w)=f(\rho_n(w))$ for all $w\in\Fr(L)$, and therefore
  \[
    \tilde\rho_{n\#}(p)=\id_{M_k}\otimes f(\rho_{n\#}(p)).
  \]
  Let us write $\rho_{n\#}(p)=(p_{jl})_{j,l}\in M_k(\mathcal
  L_B(W))$. We abbreviate $p_{jl}'=U_n(p_{jl}\oplus 0)U_n^*\in\mathcal L_B(W)$, $\hat F_n''=(\hat F_n'\oplus(-\hat
  F_n'))\oplus((-\hat F_n')\oplus\hat F_n')\in\mathcal L_B(\mathcal H_B')$, and $\tilde F_n=V\mathcal U(\hat F_n')\hat
  F_n''\mathcal U(\hat F_n')^*V^*$. Then
  \[
    [(W_n)^k,\rho_{n\#}(p),\hat F_n\oplus\cdots\oplus\hat F_n]
    =[(\mathcal H_B)^k,\tilde\rho_{n\#}(p),\tilde F_n\oplus\cdots\oplus\tilde F_n]
  \]
  in $KK(B)$. Furthermore, it can be shown straightforwardly that $T'\oplus\cdots\oplus T'$ is a compact perturbation
  of $((\mathcal H_B)^k,\tilde\rho_{n\#}(p),\tilde F_n\oplus\cdots\oplus\tilde F_n)$. In summary,
  \begin{align*}
    (W_n,\rho_n,\hat F_n)_\#(p)&=\ind[(W_n)^k,\rho_{n\#}(p),\hat F_n\oplus\cdots\oplus\hat F_n]\\
                               &=\ind'[(\mathcal H_B)^k,\tilde\rho_{n\#}(p),T'\oplus\cdots\oplus T'],
  \end{align*}
  where we used that $\ind=\ind'$ by Theorem~\ref{thm:ind=ind'}. This index can be calculated in way which is similar to the
  calculation of $\ind'[\hat E']$ in the proof of Theorem~\ref{thm:alternative calculation of the index of an almost flat
  Fredholm bundle}. This calculation yields
  \[
    (W_n,\rho_n,\hat F_n)_\#(p)=\rho[\tilde\rho_{n\#}(p)].
  \]
  We refer to \cite[Theorem~5.2.4]{hunger-asymptotically-flat-fredholm-bundles} for details.

  Let $\sigma\in E(Q,\mathcal K_B(H_B))$ be the E-theory element associated to the sequence
  \[
    \begin{tikzpicture}
      \matrix (m) [matrix of math nodes, row sep=3em, column sep=3em, commutative diagrams/every cell]
      {
        0 & \mathcal K_B(H_B) & Q & \mathcal L_B(H_B) & 0. \\
      };
      \path[-stealth, commutative diagrams/.cd, every label] (m-1-1) edge (m-1-2) (m-1-2) edge (m-1-3) (m-1-3) edge (m-1-4)
        (m-1-4) edge (m-1-5);
    \end{tikzpicture}
  \]
  by Proposition~\ref{prop:E-theory element associated to a split short exact sequence}. As in the proof of
  Theorem~\ref{thm:main theorem} it follows from Lemma~\ref{lem:homomorphism associated to a split exact sequence of abelian
  groups} and from the naturality of $\Psi$ that
  \[
    \Psi\left(\left((W_n,\rho_n,\hat
    F_n)_\#(p)\right)_{n\in\N}\right)=\Psi\left(\left(\rho[\tilde\rho_{n\#}(p)]\right)_{n\in\N}\right)
    =\sigma\bullet\Psi\left(\left([\tilde\rho_{n\#}(p)]\right)_{n\in\N}\right)
  \]
  in $D(\Sigma,\mathcal K_B(H_B))$.

  On the other hand, note that in the left hand side of~\eqref{eq:first reformulation of Dadarlat's theorem} we have
  \[
    \asind((W_n,\rho_n,\hat F_n)_{n\in\N})=\sigma\bullet[\Sigma^2\rho\otimes\id_{\mathcal K}],
  \]
  where $\rho$ is as in Lemma~\ref{lem:definition of rho for the asymptotic index}. In particular, \eqref{eq:first reformulation
  of Dadarlat's theorem}~follows if we can prove that
  \begin{equation}
    \Psi\left(\left([\tilde\rho_{n\#}(p)]\right)_{n\in\N}\right)=[\Sigma^2\rho\otimes\id_{\mathcal
    K}]\bullet\Sigma\Phi\left((i_{\ell^1})_*[p]\right)\in D(\Sigma,Q).
    \label{eq:second reformulation of Dadarlat's theorem}
  \end{equation}
  By the definition of $\Psi$ we have
  $\Psi(([\tilde\rho_{n\#}(p)])_{n\in\N})=[\Sigma^2f_{(\tilde\rho_{n\#}(p))}\otimes\id_{\mathcal K}]$ where
  $f_{(\tilde\rho_{n\#}(p))}\colon\C\to\mathcal A_\delta(Q\otimes\mathcal K)$ is the unique *-ho\-mo\-mor\-phism such that
  $f_{(\tilde\rho_{n\#}(p))}(1)=[n\mapsto\tilde\rho_{n\#}(p)]$. Of course, we have $[n\mapsto\tilde\rho_{n\#}(p)]=
  [n\mapsto\hat{\tilde\rho}_n^{(k)}(p)]\in\mathcal A_\delta(Q\otimes\mathcal K)$ since
  $\lim_{n\to\infty}\|\hat{\tilde\rho}_n^{(k)}(p)-\tilde\rho_{n\#}(p)\|=0$. On the other hand,
  $\Phi((i_{\ell^1})_*[p])=\kappa(\Sigma f_{(i_{\ell^1})_*(p)}\otimes\id_{\mathcal K})$ where $f_{(i_{\ell^1})_*(p)}\colon\C\to
  C^*G\otimes\mathcal K$ is such that $f_{(i_{\ell^1})_*(p)}(1)=i_{\ell^1}\otimes\id_{\mathcal K}(p)$. Thus,
  \eqref{eq:postcomposition with an asymptotic homomorphism}~implies that the right hand side of \eqref{eq:second reformulation
  of Dadarlat's theorem} is given by $[\Sigma^2h_p\otimes\id_{\mathcal K}]$ where $h_p=(\rho\otimes\id_{\mathcal K})\circ
  f_{(i_{\ell^1})_*(p)}\colon\C\to\mathcal A_\delta(Q\otimes\mathcal K)$ is determined by $h_p(1)=\rho\otimes\id_{\mathcal
  K}(i_{\ell^1}\otimes\id_{\mathcal K}(p))$. In particular,
  \[
    h_p(1)=[n\mapsto\hat{\tilde\rho}_n^{(k)}(p)]=f_{(\tilde\rho_{n\#}(p))}(1)
  \]
  by the definition of $\rho$, so that actually $h_p=f_{(\tilde\rho_{n\#}(p))}$. This proves \eqref{eq:second reformulation of
  Dadarlat's theorem} and hence completes the proof of the theorem.
\end{proof}

\begin{rem}
  Dadarlat's theorem \cite[Theorem 3.2]{dadarlat-group-quasi-representations-index-theory} is the specialization of
  Theorem~\ref{thm:generalization of Dadarlat's index theorem} to the case where the bundles are not almost flat Fredholm
  bundles but finite-dimensional almost flat bundles (in which case the Fredholm operator is neither necessary nor carries any
  important information). The proof of Theorem~\ref{thm:generalization of Dadarlat's index theorem} in this special case can be
  carried out replacing the use of Proposition~\ref{prop:calculation of asymptotic Kronecker pairings} with the arguments of the
  proof of \cite[Theorem 3.9]{hanke-positive-scalar-curvature}. Thus, we have also given a fairly simple proof of Dadarlat's
  theorem which is quite different from Dadarlat's original proof.
\end{rem}

\printbibliography

\end{document}